\numberwithin{equation}{section} \theoremstyle{plain}
\newtheorem{thm}{Theorem}[section]
\newtheorem{prop}[thm]{Proposition}
\newtheorem{lem}[thm]{Lemma}
\newtheorem{cor}[thm]{Corollary}
\newtheorem{defn}{Definition}
\newtheorem{conj}{Conjecture}
\newtheorem{rem}[thm]{Remark}
\newtheorem{ques}{Question}
\newtheorem{ack}{Acknowledgements}   
\def\<{\langle}
\def\>{\rangle}
\def\({\left(}
\def\){\right)}
\def\[{\left[}
\def\]{\right]}
\DeclareMathOperator{\diag}{diag}
\DeclareMathOperator{\rank}{rank}
\DeclareMathOperator{\Tr}{Tr}
\def\Re{\mathop{\text{Re}}}
\DeclareMathOperator{\oi}{\mathbf{i}}
\DeclareMathOperator{\Span}{Span}
\def\Vec{\mathop{\text{Vec}}}
\title{On some conjectures by Lu and Wenzel}
\author[J.Q. Ge]{Jianquan Ge}
\address{School of Mathematical Sciences, Laboratory of Mathematics and Complex Systems, Beijing Normal University, Beijing 100875, P.R. CHINA.}
\email{jqge@bnu.edu.cn}
\author[F.G. Li]{Fagui Li}
\address{School of Mathematical Sciences, Laboratory of Mathematics and Complex Systems, Beijing Normal University, Beijing 100875, P.R. CHINA.}
\email{faguili@mail.bnu.edu.cn}
\author[Z.Q. Lu]{Zhiqin Lu}
\address{Department of Mathematics, University of California,
Irvine, Irvine, CA 92697, USA.}
\email{zlu@math.uci.edu}
\author[Y. Zhou]{Yi Zhou}
\address{School of Mathematical Sciences, Laboratory of Mathematics and Complex Systems, Beijing Normal University, Beijing 100875, P.R. CHINA.}
\email{zhou\_yi@mail.bnu.edu.cn}
\subjclass[2010]{15A45, 15B57, 53C42.}
\date{}
\keywords{LW conjecture; DDVV-type inequalities; B\"{o}ttcher-Wenzel inequality;  Commutator.}
\thanks{The first author is partially supported by the NSFC (No. 11522103, 11331002) and by the Fundamental Research Funds for the Central Universities of China. The third author is partially supported by the National Sciences Foundation of USA (DMS-19-08513).}
\begin{document}
\maketitle

\begin{abstract}
In order to give a unified generalization of the BW inequality and the DDVV inequality, Lu and Wenzel proposed three
Conjectures \ref{conj1}, \ref{conj2}, \ref{conj3} and an open Question \ref{Lu-Wenzel ques} in 2016.
In this paper we discuss further these conjectures and put forward several new conjectures which will be shown equivalent to Conjecture \ref{conj2}. In particular, we prove Conjecture \ref{conj2} and hence all conjectures in some special cases. For Conjecture \ref{conj3}, we obtain a bigger upper bound $2+\sqrt{10}/2$, and we also give a weaker answer for the more general Question \ref{Lu-Wenzel ques}. In addition, we obtain some new simple proofs of the complex BW inequality and the condition for equality.
\end{abstract}

\section{Introduction}\label{sect-intr}
In 2005, B\"ottcher and Wenzel \cite{BW05} raised the so-called BW conjecture that if $X$, $Y$ are real square matrices, then $$\|XY-YX\|^2\leq2\|X\|^2\|Y\|^2,$$ where $\|X\|=\sqrt{\Tr XX^*}$ is the Frobenius norm (here $X^*$ is the conjugate transpose of $X$). For real $2\times2$ matrices, the proof was obtained   by B\"ottcher and Wenzel in \cite{BW05}, and L\`aszl\`o \cite{L07} proved the $3\times3$ case.
The first proof for the real $n\times n$ case was found by Vong and Jin \cite{VJ08} and independently by Lu \cite{Lu11}. After that B\"ottcher and Wenzel found another proof (cf. \cite{BW08, W10}) that also extends to the case of complex matrices. Then immediately
Audenaert \cite{AKMR09} gave a simplified proof by probability method and Lu \cite{Lu12} also got a different simple proof by eigenvalue method. The complete characterization of the equality was given in \cite{CVW10} and another unitarily invariant norm attaining the minimum norm bound for commutators was given in \cite{FKC10}.
Some generalizations of the BW-type inequalities were obtained by Wenzel and Audenaert \cite{WKMRA10}, also by Fong, Lok, Cheng \cite{CFL13} and Cheng, Liang \cite{CL17}.

In comparison with the BW inequality that estimates the Frobenius norm of the commutator between two arbitrary matrices, the DDVV inequality estimates the Frobenius norm of the commutators among arbitrary many real symmetric matrices.
Recall that the DDVV inequality comes from the normal scalar curvature conjecture (DDVV conjecture) in submanifold geometry posed by De Smet, Dillen, Verstraelen and Vrancken \cite{DDVV99} in 1999:
Let $M^n\rightarrow N^{n+m}(\kappa)$ be an isometric immersed $n$-dimensional submanifold in the real space form with constant sectional curvature $\kappa$.
Then there is a pointwise inequality $$\rho+\rho^\bot\leq\|H\|^2+\kappa, $$
where $\rho$ is the scalar curvature (intrinsic invariant), $H$ is the mean curvature vector field and $\rho^\bot$ is the normal scalar curvature (extrinsic invariants).
Dillen, Fastenakels and Veken \cite{DFV07} then transformed this conjecture into an equivalent algebraic version (DDVV inequality):
\begin{equation*}
\sum^m_{\alpha,\beta=1}\|\[B_\alpha,B_\beta\]\|^2\leq c \(\sum^m_{\alpha=1}\|B_\alpha\|^2\)^2,
\end{equation*}
here  $c=1$ when $B_1,\cdots,B_m$ are real $n\times n$ symmetric matrices.
There were many researches on the DDVV conjecture (cf. \cite{DHTV04, CL08, GT11, Lu07} etc.).
Finally Lu \cite{Lu11} and Ge-Tang \cite{GT08} proved the DDVV inequality (and hence the DDVV conjecture) independently and differently. After then various of DDVV-type inequalities were obtained such as: $c=\frac{1}{3}$ ($n=3$) and $c=\frac{2}{3}$ ($n\geq4$) for real skew-symmetric matrices (cf. \cite{Ge14}); $c=\frac{4}{3}$ for Hermitian matrices (cf. \cite{GXYZ17}) and also for arbitrary real or complex matrices (cf. \cite{GLZ18}).

With the BW inequality and the DDVV inequality on both hands, Lu and Wenzel (\cite{LW16, LW17}) summarized the commutator estimates and considered a unified generalization of them. They proposed the following three conjectures and an open question. Let
$M(n,\mathbb{K})$ be the space of $n\times n$ matrices in the field $\mathbb{K}$.

\begin{conj}\label{conj1}
Let $B_1,\cdots,B_m\in M(n,\mathbb{R})$ be real $n\times n$ matrices subject to  $$\Tr \Big(B_\alpha [B_\gamma, B_\beta]\Big)=0$$ for any $1\leq \alpha,\beta,\gamma\leq m$, then
\begin{equation}\label{csymDDVV}
\sum^m_{\alpha,\beta=1}\|\[B_\alpha,B_\beta\]\|^2\leq  \(\sum^m_{\alpha=1}\|B_\alpha\|^2\)^2.
\end{equation}
\end{conj}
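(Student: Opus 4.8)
The plan is to treat Conjecture \ref{conj1} as a constrained extremal problem, in the spirit of the proofs of the DDVV inequality by Lu \cite{Lu11} and Ge--Tang \cite{GT08}. First observe that the hypothesis is coordinate-free: the trilinear form $\phi(X,Y,Z):=\Tr\big(X[Y,Z]\big)$ on $M(n,\mathbb{R})$ is alternating (it is antisymmetric in $Y,Z$ and invariant under cyclic permutations), so the condition $\Tr(B_\alpha[B_\gamma,B_\beta])=0$ for all $\alpha,\beta,\gamma$ just says that $\phi$ vanishes identically on $V:=\Span\{B_1,\dots,B_m\}$. This vanishing is automatic when $m\le 2$ (any triple of indices then repeats and $\phi$ is alternating), in which case \eqref{csymDDVV} follows from the BW inequality $\|[B_1,B_2]\|^2\le 2\|B_1\|^2\|B_2\|^2$ together with $(\|B_1\|^2+\|B_2\|^2)^2\ge 4\|B_1\|^2\|B_2\|^2$; and it is automatic when all $B_\alpha$ are symmetric (then $[B_\gamma,B_\beta]$ is skew and $\Tr(B_\alpha[B_\gamma,B_\beta])=0$), in which case \eqref{csymDDVV} is the DDVV inequality with constant $1$. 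Thus Conjecture \ref{conj1} already holds in those two cases, interpolating between BW and DDVV, and its genuine content is that the vanishing of $\phi|_V$ improves the universal DDVV-type constant $\tfrac{4}{3}$ of \cite{GLZ18} down to the optimal $1$.

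Normalize by $\sum_\alpha\|B_\alpha\|^2=1$ and consider the maximum of $F(B):=\sum_{\alpha,\beta}\|[B_\alpha,B_\beta]\|^2$ over the compact real variety cut out by this sphere and the cubic equations $\Tr(B_\alpha[B_\gamma,B_\beta])=0$. Both $F$ and the constraint set are invariant under the two commuting symmetries that always organize such problems: the index rotation $B_\alpha\mapsto\sum_\beta O_{\alpha\beta}B_\beta$ for $O\in O(m)$ (which does not change $V$) and the simultaneous conjugation $B_\alpha\mapsto PB_\alpha P^{T}$ for $P\in O(n)$. I would use this freedom to bring a maximizer to a normal form --- diagonalizing the Gram matrix $(\Tr B_\alpha B_\beta^{T})$, placing one of the $B_\alpha$ in a canonical shape, and so on.

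At a maximizer the Euler--Lagrange equations take the form
\begin{equation*}
\nabla_{B_\alpha}F \;=\; \lambda\,B_\alpha \;+\; \sum_{\beta,\gamma}\mu_{\alpha\beta\gamma}\,[B_\beta^{T},B_\gamma^{T}],
\end{equation*}
where $\nabla_{B_\alpha}F$ is the cubic Frobenius gradient of $F$ and $(\mu_{\alpha\beta\gamma})$ is totally antisymmetric. Pairing the $\alpha$-th equation with $B_\alpha$, summing over $\alpha$, and using Euler's identity for the degree-$4$ functional $F$ together with the constraints --- which make each $\langle[B_\beta^{T},B_\gamma^{T}],B_\alpha\rangle=\pm\Tr(B_\alpha[B_\gamma,B_\beta])$ vanish --- yields $\lambda=4\,F(B)$; hence Conjecture \ref{conj1} is equivalent to the eigenvalue-type bound $\lambda\le 4$ for the top Lagrange multiplier of this system. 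At this stage I would feed in the splitting $B_\alpha=S_\alpha+A_\alpha$ into symmetric and skew-symmetric parts: since $\phi$ annihilates every all-symmetric triple, the cubic constraints couple only the $A_\alpha$ with one another and with the $S_\alpha$, and the hope is to dominate $F(B)$ by a combination of the all-symmetric estimate (DDVV, constant $1$) and a BW-type estimate for the remaining interactions, with the cross terms controlled precisely by the constraints.

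The step I expect to be the real obstacle is exactly the one where DDVV offers nothing: the extremizers of the \emph{unconstrained} commutator functional are built from skew ``rotation pairs'' of $2\times2$ blocks, and although the cubic constraints do rule out the worst such configurations, they do not obviously force a maximizer into a block form small enough for the Euler--Lagrange system to close up by hand. To get past this I would try, in decreasing order of ambition: (i) a dimension-reduction lemma showing that some maximizer is supported on a single $2\times2$ or $4\times4$ block, reducing the conjecture to a finite low-dimensional verification; (ii) proving that Conjecture \ref{conj1} is equivalent to, or implied by, one of the reformulations of Conjecture \ref{conj2} introduced in this paper and importing the special cases established there; or, failing both, (iii) settling for a constant strictly between $1$ and $\tfrac{4}{3}$ by running the above scheme with the coarser $2+\sqrt{10}/2$-type estimates used for Conjecture \ref{conj3}, and then sharpening.
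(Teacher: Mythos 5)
There is a genuine gap here, and it is the one you yourself flag: your proposal never actually establishes the inequality. Statement \ref{conj1} is an \emph{open conjecture} in this paper --- the paper does not prove it in general, and neither do you. Your preliminary observations are correct (the form $\Tr(X[Y,Z])$ is alternating, so the hypothesis is a condition on $\Span\{B_\alpha\}$; the case $m\le 2$ reduces to the BW inequality via $4\|B_1\|^2\|B_2\|^2\le(\|B_1\|^2+\|B_2\|^2)^2$; the all-symmetric case is the DDVV inequality), and your Euler identity computation $\lambda=4F(B)$ is formally fine, but the decisive step --- bounding the multiplier, or reducing a maximizer to a small block form --- is exactly the open content of the problem, and nothing in your outline closes it. Your fallback (iii) of settling for a non-optimal constant is also not carried out, so as written the proposal proves nothing beyond the two special cases already known.

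Your fallback (ii) is precisely what the paper does, but by a cleaner device than the full Euler--Lagrange system. In Section \ref{sect-eq} the paper (following \cite{LW16}) lets $a$ be the largest constant with $(\sum_\alpha\|A_\alpha\|^2)^2\ge 2a\sum_{\alpha<\beta}\|[A_\alpha,A_\beta]\|^2$ on the constraint set, uses the $O(m)\times O(n)$ invariance to normalize a sharp configuration so that the $A_\alpha$ are mutually orthogonal with the trace conditions holding and $\|A_1\|$ largest, and then regards the resulting equality as a quadratic polynomial in $t^2=\|A_1\|^2$. Nonnegativity of that quadratic for all $t^2\ge0$ together with its vanishing at $t^2=\|A_1\|^2$ forces $\|A_1\|^2=a\sum_{\alpha>1}\|[A',A_\alpha]\|^2-\sum_{\alpha>1}\|A_\alpha\|^2$ with $A'=A_1/\|A_1\|$, and Conjecture \ref{conj2} applied to $(A';A_2,\dots,A_m)$ then yields $a\ge1$. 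This avoids having to analyze the Lagrange multipliers $\mu_{\alpha\beta\gamma}$ or the block structure of a maximizer at all. But the reduction is conditional: Conjecture \ref{conj2} is itself open, and the paper only verifies it (hence Conjecture \ref{conj1}) in the special cases of Theorem \ref{thm-special-cases} (normal $X$, $\rank X=1$, $n=2,3$), e.g.\ yielding Conjecture \ref{conj1} for Hermitian $B_\alpha$'s. If you pursue this route, you should import the quadratic-in-$t^2$ argument rather than your multiplier system, and be explicit that the result remains conditional on Conjecture \ref{conj2}.
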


\begin{conj}\label{conj2}
{$($LW Conjecture$)$.}
Let $B,B_2,\cdots,B_m\in M(n,\mathbb{R})$ be matrices with
\begin{enumerate} [ \rm (i) ]
\item $\Tr (B_\alpha B^*_\beta)=0$  $(i.e., B_\alpha \bot B_\beta)$ \quad for any $\alpha\neq \beta$;
\item $\Tr \Big(B_\alpha [B, B_\beta]\Big)=0$   ~~~~~~~~~~~~~~ \quad\quad for any $2\leq \alpha,\beta\leq m$.
\end{enumerate}
 Then
\begin{equation}\label{ineq of conj2}
\sum^m_{\alpha=2}\|\[B,B_\alpha\]\|^2\leq \(\max \limits_{2\leq \alpha\leq m}\|B_\alpha\|^2+\sum^m_{\alpha=2}\|B_\alpha\|^2\)\|B\|^2.
\end{equation}
\end{conj}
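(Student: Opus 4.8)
The plan is to work throughout in the Frobenius inner product $\langle X,Y\rangle=\Tr(XY^{*})$, in which $\|[B,B_\alpha]\|^{2}=\langle PB_\alpha,B_\alpha\rangle$ with $P:=\mathrm{ad}_{B^{*}}\,\mathrm{ad}_{B}\succeq 0$ (here $\mathrm{ad}_{B}X=[B,X]$ and $(\mathrm{ad}_{B})^{*}=\mathrm{ad}_{B^{*}}$), and to use that both hypotheses and the conclusion are unchanged under scaling and under orthogonal conjugation $X\mapsto QXQ^{T}$. Thus I would first normalize $\|B\|=1$, discard any $B_\alpha=0$, abbreviate $S=\sum_{\alpha}\|B_\alpha\|^{2}$ and $t=\max_{\alpha}\|B_\alpha\|^{2}$, and bring $B$ to real Schur form; in particular, when $B$ is normal, $B=\diag(\lambda_{1},\dots,\lambda_{n})$ with $\sum_{i}|\lambda_{i}|^{2}=1$ in a suitable orthonormal basis. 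I would also restate the hypotheses: (i) says that $B,B_{2},\dots,B_{m}$ are pairwise orthogonal, while, using cyclicity of the trace and the automatic identity $\Tr(B_\alpha[B,B_\alpha])\equiv 0$, hypothesis (ii) is equivalent to $\Tr(B[X,Y])=0$ for all $X,Y\in U:=\Span\{B,B_{2},\dots,B_{m}\}$, i.e. to $\langle[B,X],Y^{T}\rangle=0$ for all $X,Y\in U$, that is, $\mathrm{ad}_{B}(U)\perp U^{T}$.

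The heart of the matter is to trap $\sum_{\alpha}\|[B,B_\alpha]\|^{2}$ between two bounds. The real B\"ottcher--Wenzel inequality says precisely that $\lambda_{\max}(P)=\|\mathrm{ad}_{B}\|_{\mathrm{op}}^{2}\le 2\|B\|^{2}$, so
\begin{equation*}
\sum_{\alpha=2}^{m}\|[B,B_\alpha]\|^{2}=\sum_{\alpha}\langle PB_\alpha,B_\alpha\rangle\le\lambda_{\max}(P)\sum_{\alpha}\|B_\alpha\|^{2}\le 2S\|B\|^{2}.
\end{equation*}
Since $2S\le t+S$ exactly when $S\le t$, i.e. when at most one $B_\alpha$ is nonzero, this already settles $m=2$---which is just BW, the orthogonality being irrelevant there because $[B,B_\alpha]=[B,B_\alpha-cB]$---and, more generally, every configuration with $t\ge S$; hence one may assume $m\ge 3$, and the problem becomes to push the constant $2S$ down to $t+S$. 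This improvement must come from (ii): the images $[B,B_\alpha]$ all lie in the \emph{fixed} subspace $(U^{T})^{\perp}$, which prevents the $B_\alpha$ from concentrating along the few directions on which $\langle P\cdot,\cdot\rangle$ is close to $\lambda_{\max}(P)$.

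To make this quantitative I would first treat the normal case, where in the eigenbasis $\|[B,B_\alpha]\|^{2}=\sum_{i\ne j}|\lambda_{i}-\lambda_{j}|^{2}|(B_\alpha)_{ij}|^{2}$, hence
\begin{equation*}
\sum_{\alpha}\|[B,B_\alpha]\|^{2}=\sum_{i\ne j}|\lambda_{i}-\lambda_{j}|^{2}\Big(\sum_{\alpha}|(B_\alpha)_{ij}|^{2}\Big);
\end{equation*}
here pairwise orthonormality of the $B_\alpha/\|B_\alpha\|$ bounds each weight $\sum_{\alpha}|(B_\alpha)_{ij}|^{2}$ by $t$ (projection onto a single matrix entry), the total off-diagonal weight is at most $S$, and (ii) together with $B\perp B_\alpha$ couples these weights to the $\lambda_{i}$, so a Lagrange-multiplier / sum-of-squares analysis of the resulting finite-dimensional linear program, under $\sum_{i}|\lambda_{i}|^{2}=1$, should produce exactly $t+S$, with equality recovering the BW and DDVV extremizers. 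This can be carried to completion in the special cases the paper treats: when all $\|B_\alpha\|^{2}=t$, the left side equals $t\,\|\mathrm{ad}_{B}|_{W}\|_{\mathrm{HS}}^{2}$ for $W=\Span\{B_{2},\dots,B_{m}\}$ of dimension $m-1$, and one needs only $\|\mathrm{ad}_{B}|_{W}\|_{\mathrm{HS}}^{2}\le m\|B\|^{2}$, which follows by combining the identity $\|\mathrm{ad}_{B}\|_{\mathrm{HS}}^{2}=2n\|B\|^{2}-2|\Tr B|^{2}$, the single-commutator bound $\sqrt{2}\|B\|$ on each singular value of $\mathrm{ad}_{B}$, and the restriction (ii) imposes on how many large singular directions of $\mathrm{ad}_{B}$ may lie in $(U^{T})^{\perp}$; for small $n$ one verifies the (now low-dimensional) quadratic inequality directly from the real Schur form of $B$, if necessary by an explicit SOS certificate. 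I would also record, as the abstract anticipates, several equivalent reformulations---for instance the asymmetric version $\sum_{\alpha\ne\alpha_{0}}\|[B,B_\alpha]\|^{2}\le(S-t)\|B\|^{2}$ obtained by peeling off a maximal $B_{\alpha_{0}}$ via BW, or a normalized statement on the unit sphere of $U$---and prove them equivalent to Conjecture \ref{conj2}, since one of them is usually the most convenient to attack.

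The main obstacle is the general, non-normal case. When $B$ is not normal the self-commutator $BB^{*}-B^{*}B$ does not vanish, $\mathrm{ad}_{B}$ has no orthonormal eigenbasis, and the clean ``sum over entries $(i,j)$'' representation breaks down; worse, the conjectured extremal configurations need not have $B$ normal, so one cannot simply reduce to that case. Controlling the weighted sum $\sum_{\alpha}\langle PB_\alpha,B_\alpha\rangle$ for \emph{all} $\alpha$ at once---proving that the $B_\alpha$ cannot conspire, subject only to (i), the quadratic constraint (ii), and the weak spectral input $\lambda_{\max}(P)\le 2\|B\|^{2}$, to overload the large part of the spectrum of $P$---is exactly the difficulty that keeps the full LW conjecture open and confines this argument to the special cases above.
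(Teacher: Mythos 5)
First, a point of context: the statement you are proving is the paper's central \emph{open} conjecture. The paper itself establishes it only for $B$ normal, for $\rank B=1$, and for $n\le 3$, plus non-sharp constants in general, so a complete proof is not expected and your honesty about the general case is appropriate. Your framing is also sound: the operator $P=T_B$, the reformulation of hypothesis (ii) as isotropy of $U=\Span\{B,B_2,\dots,B_m\}$ under $Y\mapsto[B,Y]^T$, and the observation that B\"ottcher--Wenzel alone settles $m=2$ and every configuration with $t\ge S$ all match the paper's setup. The difficulty is that the parts you claim "can be carried to completion" are not completed, and the single idea that makes them completable in the paper is absent from your plan.

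Concretely, the missing ingredient is the pair Proposition \ref{prop2} / Lemma \ref{lemma 2}: every positive eigenvalue of $T_B$ has even multiplicity (proved via the antilinear map $Y\mapsto[B,Y]^*$, whose square is $-T_B$), and consequently a subspace $W$ that is isotropic in the sense of (ii) satisfies $\Tr T_B|_W\le\sum_{i=1}^{\dim W}\lambda_{2i-1}(T_B)$, i.e.\ an isotropic $W$ can only capture \emph{every other} eigenvalue of $T_B$. This is exactly the quantitative version of your remark that (ii) "prevents the $B_\alpha$ from concentrating along the few large directions of $P$", and combined with Abel summation in the weights $\|B_\alpha\|^2$ it reduces Conjecture \ref{conj2} to the eigenvalue-sum statement $\sum_{i=1}^{2k}\lambda_i(T_B)\le 2k+2$ of Conjecture \ref{conj2A}. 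Without it your two concrete claims do not go through: (a) in the equal-norms case you need $\Tr T_B|_W\le m$ for $\dim W=m-1$, but the inputs you cite ($\Tr T_B=2n-2|\Tr B|^2$ and the bound $\lambda_1(T_B)\le 2$) only yield $\min\bigl(2(m-1),\,2n-2|\Tr B|^2\bigr)$, which already exceeds $m$ at $m=3$; the clause "the restriction (ii) imposes on how many large singular directions may lie in $(U^T)^{\perp}$" is precisely the unproven step. (b) In the normal case, the "Lagrange-multiplier / sum-of-squares analysis" you defer to is a genuinely nontrivial combinatorial inequality (Lu's Lemma \ref{Lucorcom1}: $\sum_{i<j}|\eta_i-\eta_j|^2r_{ij}\le\sum_{i<j}r_{ij}+\max_{i<j}r_{ij}$ under $\sum\eta_i=0$, $\sum|\eta_i|^2=1$), not a routine optimization. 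As written, therefore, the proposal proves only the regime $t\ge S$; to recover even the paper's partial results you would need to add the even-multiplicity and isotropic-trace lemmas, the resulting equivalence with the eigenvalue-sum form, and then the case-by-case spectral computations (Lu's lemma for normal $B$, the decomposition $K_B^*K_B=K_1+K_2$ for rank one, and the trace identity $\Tr T_B=2n-2|\Tr B|^2$ for $n\le 3$).
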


\begin{conj}\label{conj3}
For $X\in M(n,\mathbb{R})$ with $\|X\|=1$,  let $T_X$ be the linear map on $M(n,\mathbb{R})$ defined by $T_X(Y)=\[X^*, \[X, Y\]\]$ and
$\lambda(T_X):=\{\lambda_1(T_X)\geq\lambda_2(T_X)\geq\lambda_3(T_X)\cdots \}$ be the set of eigenvalues of $T_X$. Then
$$\lambda_1(T_X)+\lambda_3(T_X)\leq3.$$
\end{conj}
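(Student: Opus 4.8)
To prove Conjecture~\ref{conj3}, the plan is to proceed in three stages: normalising reductions that trivialise small $n$, a dichotomy on the size of $\lambda_1(T_X)$, and a sharp analysis of the remaining ``middle'' regime.

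\emph{Reductions.} Write $C_X$ for the linear map $Y\mapsto[X,Y]$ on $M(n,\mathbb{R})$; then $T_X=C_X^{*}C_X$ for the Frobenius product $\langle A,B\rangle=\Tr(AB^{*})$, so $T_X$ is self-adjoint and positive semidefinite, $\langle T_XY,Y\rangle=\|[X,Y]\|^2$, and the BW inequality is precisely $\lambda_1(T_X)=\max_{\|Y\|=1}\|[X,Y]\|^2\le 2$. Three normalisations cost nothing: scaling (since $\lambda_i(T_{cX})=c^2\lambda_i(T_X)$, take $\|X\|=1$); orthogonal conjugation ($Y\mapsto QYQ^{*}$ conjugates $T_X$ to $T_{QXQ^{*}}$); and transposition ($Y\mapsto Y^{*}$ conjugates $T_X$ to $T_{X^{*}}$), so $X$ may be placed in any convenient orthogonal normal form. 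Because $[X,Y]=0$ forces $T_XY=0$, the centraliser of $X$, of dimension $\ge n$, lies in $\ker T_X$, so $T_X$ has at most $n^2-n$ nonzero eigenvalues; in particular $\lambda_3(T_X)=0$ and the statement is trivial for $n\le 2$. One also records $\Tr T_X=2n-2(\Tr X)^2$. Finally, the constant $3$ is sharp: for $X$ proportional to $\diag(2,-1,-1)$ (embedded into any $n\ge 3$) the eigenvalues of $T_X$ are the numbers $(a_i-a_j)^2$, i.e. $\tfrac32$ with multiplicity $4$ and $0$ otherwise, so $\lambda_1=\lambda_2=\lambda_3=\lambda_4=\tfrac32$.

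\emph{A soft bound and a dichotomy.} If $Y_1,Y_2,Y_3$ are orthonormal eigenvectors of $T_X$ for $\lambda_1\ge\lambda_2\ge\lambda_3$, then $\langle[X,Y_i],[X,Y_j]\rangle=\langle T_XY_i,Y_j\rangle=0$ for $i\ne j$, hence $\|[X,\tfrac{1}{\sqrt2}(Y_1+Y_3)]\|^2=\tfrac12(\lambda_1+\lambda_3)$, and BW yields only the crude bound $\lambda_1+\lambda_3\le 4$. To sharpen, I would split on $\lambda_1(T_X)$. If $\lambda_1(T_X)\le\tfrac32$, then $\lambda_3(T_X)\le\lambda_1(T_X)\le\tfrac32$ and we are done, so the content lies in $\lambda_1(T_X)>\tfrac32$. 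There $[X,Y_1]$ is within a factor $\tfrac34$ of the BW bound, and a quantitative form of the BW equality characterisation should force the pair $(X,Y_1)$ to be close, after orthogonal conjugation, to an extremal pair supported on a two-dimensional subspace, off which $X$ is small; feeding this back into $T_X$, together with the orthogonality above and the presence of an intermediate eigenvalue $\lambda_2\ge\lambda_3$, should give $\lambda_3(T_X)\le 3-\lambda_1(T_X)$. This is comfortable when $\lambda_1$ is near $2$ (then $\lambda_3$ is pushed below $1$); the delicate regime is the ``middle band'' in which $\lambda_1(T_X)\in(\tfrac32,2)$ stays bounded away from both ends.

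\emph{The finite core and the main obstacle.} For the middle band I would either (i) run a Lagrange-multiplier analysis of $X\mapsto\lambda_1(T_X)+\lambda_3(T_X)$ on the unit sphere of $M(n,\mathbb{R})$ at a maximiser, or (ii) first establish that some maximiser may be taken symmetric, reducing to the finite problem: for symmetric $X$ with eigenvalues $a_1\ge\cdots\ge a_n$ and $\sum a_i^2=1$ one has $\lambda_1(T_X)=\lambda_2(T_X)=(a_1-a_n)^2$ and $\lambda_3(T_X)$ equal to the third-largest of the numbers $(a_i-a_j)^2$ counted with multiplicity, and one checks by elementary calculus that $(a_1-a_n)^2+\lambda_3(T_X)\le 3$, with equality at the pattern $(2,-1,-1)$. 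The main obstacle, and presumably the reason only the weaker bound $2+\sqrt{10}/2$ is proved below, is exactly this band: at the sharp configuration the top four eigenvalues of $T_X$ coincide, so $\lambda_1+\lambda_3$ is not differentiable there, forcing route~(i) to be carried out with a subdifferential, while route~(ii) needs the nontrivial a priori reduction to symmetric $X$. Pinning down the exact constant $3$ rather than a near-miss requires controlling $\lambda_3$ against $\lambda_1$ \emph{uniformly} across this band, and that is the step I anticipate will be genuinely hard.
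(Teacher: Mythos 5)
This statement is a conjecture that the paper itself does not prove: the paper only establishes the weaker bound $\lambda_1(T_X)+\lambda_3(T_X)\leq\frac{4+\sqrt{10}}{2}$ in general (Theorem \ref{thm-conj3-weak}) and the full bound $3$ in special cases, most relevantly for normal $X$ (Theorem \ref{normallemma}). Your proposal, as written, also does not prove it, and you say so yourself. The preliminary reductions, the soft bound $\lambda_1+\lambda_3\leq 4$, the extremal example $X\propto\diag(2,-1,-1)$, and the computation for symmetric $X$ (where $\lambda(T_X)=\{(a_i-a_j)^2\}$ and the inequality reduces to an elementary scalar optimization) are all correct; the last of these is exactly the content of the paper's Theorem \ref{normallemma}, which carries out the case analysis you defer to ``elementary calculus.'' But the two steps on which the whole argument rests in the non-normal case are both missing: (a) a \emph{quantitative} stability version of the BW equality characterization, strong enough to convert $\lambda_1(T_X)>\tfrac32$ into a usable structural statement about $X$ --- nothing of this kind is proved here or in the cited literature, and the known equality analysis (Theorem \ref{equal}) is purely qualitative; and (b) the a priori reduction of a maximizer of $\lambda_1+\lambda_3$ to a symmetric matrix, for which no mechanism is offered. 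Without (a) or (b) the ``middle band'' $\lambda_1\in(\tfrac32,2)$ is untouched, so the proposal is a program rather than a proof.

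It is also worth noting that the route the paper actually takes to its partial result is entirely different from your dichotomy: it vectorizes, writes $K_X^*K_X=K_1+K_2$ with $K_1=I\otimes X^*X+\overline{X}X^t\otimes I$ and $K_2=2(B^t\otimes B-A^t\otimes A)$ for the Hermitian/skew-Hermitian splitting $X=A+B$ (Proposition \ref{VecTX}), then combines Weyl-type inequalities (Lemma \ref{lem3}) with the even multiplicity of positive eigenvalues (Proposition \ref{prop2}) to bound $\sum_{i=1}^4\lambda_i(T_X)$. That method degrades gracefully but cannot reach the constant $3$, precisely because it estimates $K_1$ and $K_2$ separately; your observation that the difficulty concentrates where $\lambda_1=\lambda_2=\lambda_3=\lambda_4=\tfrac32$ is accurate, but identifying the obstacle is not the same as overcoming it.
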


\begin{ques}\label{Lu-Wenzel ques}
What is the upper bound of
$\sum\limits_{i=1}^{k}\lambda_{2i-1}(T_X)?$
\end{ques}
If $k = 1$, the bound is $2$ by the BW inequality, i.e., $\lambda_1(T_X)\leq2$, since we have
$$\lambda_1(T_X)=\max \limits_{\|Y\|=1}\langle T_X Y, Y\rangle=\max \limits_{\|Y\|=1}\|[X,Y]\|^2\leq 2.$$
If $k = 2$, the bound is supposed to be $3$ by Conjecture \ref{conj3}.
On the other hand, when restricted to real symmetric matrices, Conjecture \ref{conj1} reduces to the DDVV inequality. It turns out that not only the BW inequality and the DDVV inequality but also both Conjectures \ref{conj1} and \ref{conj3} are implied by Conjecture \ref{conj2} (cf. \cite{LW16}). Moreover, we will show that Conjecture \ref{conj2} is equivalent to assigning $k+1$ as the upper bound of $\sum_{i=1}^{k}\lambda_{2i-1}(T_X)$ for $k\geq1$, which is nothing but the following Conjecture \ref{conj2A} because we can prove $\lambda_{2i-1}(T_X)=\lambda_{2i}(T_X)$ for any $i$ (See Proposition \ref{prop2}). Hence, Conjecture \ref{conj2}, as well as its equivalent Conjectures \ref{conj2A}-\ref{conj2C} in the following, takes exactly the role of a unified generalization of the BW inequality and the DDVV inequality for real matrices. We call Conjecture \ref{conj2} the Fundamental Conjecture of Lu and Wenzel, or simply the (real) LW Conjecture.

\begin{conj}\label{conj2A}
For $X\in M(n,\mathbb{R})$ with $\|X\|=1$, we have
\begin{equation}\label{ineq of conj2A}
\sum_{i=1}^{2k}\lambda_{i}(T_X)\leq2k+2, \quad k=1,\cdots,[\frac{n^2}{2}].
\end{equation}
\end{conj}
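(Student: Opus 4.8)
The plan is to work with the self-adjoint operator $T_X=\operatorname{ad}_{X^{*}}\operatorname{ad}_X$ on the Euclidean space $M(n,\mathbb R)$ with $\langle A,B\rangle=\Tr(AB^{*})$, where $\operatorname{ad}_X(Y)=[X,Y]$; since $\operatorname{ad}_X^{*}=\operatorname{ad}_{X^{*}}$ we have $T_X=\operatorname{ad}_X^{*}\operatorname{ad}_X\ge 0$ and $\langle T_XY,Y\rangle=\|[X,Y]\|^{2}$. By the Ky Fan maximum principle,
\begin{equation*}
\sum_{i=1}^{2k}\lambda_i(T_X)=\max\Bigl\{\textstyle\sum_{j=1}^{2k}\|[X,Y_j]\|^{2}:\ Y_1,\dots,Y_{2k}\ \text{orthonormal in}\ M(n,\mathbb R)\Bigr\}=\max_{\Pi}\Tr(T_X\Pi),
\end{equation*}
the last maximum over rank-$2k$ orthogonal projections $\Pi$, so \eqref{ineq of conj2A} for a fixed $k$ is the statement $\Tr(T_X\Pi)\le 2k+2$ for all such $\Pi$. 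Using $\lambda_1(T_X)\le 2$ (the real BW inequality) and the pairing $\lambda_{2i-1}(T_X)=\lambda_{2i}(T_X)$ from Proposition~\ref{prop2}, the set $\{i:\lambda_i(T_X)>1\}$ is an initial segment of even size $2k_0$ with $2k_0\le n^{2}-\dim\ker T_X\le n^{2}-n\le 2[n^{2}/2]$; hence the whole family \eqref{ineq of conj2A} ($k=1,\dots,[n^{2}/2]$) is equivalent to the single scale-invariant inequality
\begin{equation*}
\Tr\bigl((T_X-\mathrm{Id})_{+}\bigr)=\sum_i\bigl(\lambda_i(T_X)-1\bigr)_{+}\le 2,
\end{equation*}
which is the form I would actually establish.

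Two harmless reductions come first. Since $[X,Y]=[X-\tfrac{\Tr X}{n}I,Y]$ we have $T_X=T_{X_0}$ with $X_0=X-\tfrac{\Tr X}{n}I$ and $\|X_0\|^{2}=1-\tfrac{|\Tr X|^{2}}{n}\le 1$; as $T_{cX}=c^{2}T_X$, rescaling $X_0$ to unit norm only enlarges the eigenvalues, so we may assume $\Tr X=0$ and $\|X\|=1$. Then, by the real Schur decomposition, $X=Q(D+N)Q^{T}$ with $Q$ orthogonal, $D$ block-diagonal and normal, and $N$ strictly upper block-triangular; conjugation by $Q$ is an isometry of $M(n,\mathbb R)$ intertwining $T_X$ with $T_{D+N}$, so we may assume $X=D+N$. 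The classical identity $\Tr(\operatorname{ad}_A\operatorname{ad}_B)=2n\Tr(AB)-2\Tr A\,\Tr B$ then gives $\Tr(T_X)=2n\|X\|^{2}-2|\Tr X|^{2}=2n$, which already proves \eqref{ineq of conj2A} for every $k\ge n-1$ (then $\sum_{i=1}^{2k}\lambda_i\le 2n\le 2k+2$); since $k=1$ is the BW inequality, the genuinely open range is $2\le k\le n-2$, i.e.\ $n\ge 4$.

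Next I would settle the normal case $N=0$. If $x_1,\dots,x_n$ are the eigenvalues of $X$, then (upon complexifying) $T_X$ is diagonalized by the matrix units of an eigenbasis, with eigenvalues $|x_i-x_j|^{2}$, each unordered pair $i<j$ contributing an eigenvalue-pair plus $n$ zeros from the diagonal; the claim then becomes
\begin{equation*}
\sum_{1\le i<j\le n}\bigl(|x_i-x_j|^{2}-1\bigr)_{+}\le 1,\qquad \sum_i x_i=0,\quad \sum_i|x_i|^{2}=1.
\end{equation*}
To prove it, let $G$ be the graph of ``active'' pairs $\{i,j\}$, those with $|x_i-x_j|^{2}>1$. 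Two independent active edges are impossible, since $|x_a-x_b|^{2}+|x_c-x_d|^{2}\le 2\sum_i|x_i|^{2}=2$; and an active triangle is impossible, since $\sum_{i<j\in\{a,b,c\}}|x_i-x_j|^{2}=3\sum_i|x_i|^{2}-|x_a+x_b+x_c|^{2}\le 3$. So $G$ is a star $K_{1,r}$ centered at some vertex, say $1$; a Cauchy--Schwarz estimate using the two constraints yields $\sum_{j\in\mathrm{leaves}}|x_1-x_j|^{2}\le r+1$, i.e.\ the objective is $\le 1$, with equality forcing $x_2=\dots=x_{r+1}$ and $|x_1|^{2}=\tfrac{r}{r+1}$. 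The Hermitian ($x_i\in\mathbb R$) and skew-symmetric subcases, as well as small $n$, serve as consistency checks.

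The main obstacle is the general, non-normal case: controlling the interaction of $D$ and $N$. Expanding $T_X=(\operatorname{ad}_{D^{T}}+\operatorname{ad}_{N^{T}})(\operatorname{ad}_D+\operatorname{ad}_N)$ one would like $\Tr\bigl((T_X-\mathrm{Id})_{+}\bigr)\le\Tr\bigl((T_D-\mathrm{Id})_{+}\bigr)+(\text{error of order }\|N\|)$, but the cross terms $\operatorname{ad}_{D^{T}}\operatorname{ad}_N+\operatorname{ad}_{N^{T}}\operatorname{ad}_D$ are not sign-definite and the naive perturbative bound is far too lossy; the extremal behaviour is genuinely non-normal (already $X=E_{12}$ attains $\lambda_1(T_X)=2$). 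That this step is hard is unavoidable, since the case $k=2$ of \eqref{ineq of conj2A} is exactly Conjecture~\ref{conj3}, $\lambda_1(T_X)+\lambda_3(T_X)\le 3$, for which only the weaker bound $2+\sqrt{10}/2$ is presently available. Realistically I would therefore aim for (i) the normal case above, (ii) small dimensions $n$, and (iii) a proof under extra structural assumptions, exploiting the equivalence with Conjecture~\ref{conj2} and its orthogonality hypothesis (ii), for instance by restricting the number $m$ of test matrices.
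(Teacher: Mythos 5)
The statement you were asked to prove is Conjecture \ref{conj2A}, which is \emph{open}: the paper contains no proof of it, only the equivalences of Theorem \ref{equiv conj}, the special cases of Theorem \ref{thm-special-cases} (normal $X$, $\rank X=1$, $n=2,3$), and the non-sharp general bound $2k+1+2\sqrt{k}$ of Theorem \ref{thm-conj7-weak}. Your attempt is honest about this and, up to the point where you stop, is sound and closely parallels the paper's partial results. Your reduction of the whole family \eqref{ineq of conj2A} to $\sum_i(\lambda_i(T_X)-1)_+\le 2$ (using $\lambda_1\le2$, the pairing of Proposition \ref{prop2}, and $\dim\ker T_X\ge n$) is correct and is a clean reformulation the paper does not state explicitly. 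Your trace identity $\Tr T_X=2n-2|\Tr X|^2$ is the paper's Corollary \ref{cortrTX}, and your observation that it settles $k\ge n-1$ (hence $n=2,3$) is exactly how the paper handles those cases. For normal $X$ the paper invokes Lu's Lemma \ref{Lucorcom1}; your star-graph argument (no two independent active edges, no active triangle, then Cauchy--Schwarz on the star) is a self-contained and arguably more transparent proof of the special case of that lemma with $r_{ij}\in\{0,1\}$, which is all that is needed here. One small inaccuracy: in the real Schur form $X=Q(D+N)Q^{T}$ the $2\times2$ diagonal blocks of $D$ need not be normal, so your splitting into ``normal plus strictly triangular'' should be done over $\mathbb{C}$ (which suffices, since the complex conjecture implies the real one).

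The gap you name --- controlling the non-normal interaction terms --- is the genuine one, and it is not closed in the paper either; as you note, already $k=2$ is Conjecture \ref{conj3}, for which the paper only obtains $\lambda_1+\lambda_3\le(4+\sqrt{10})/2$. Two things you could still have extracted without resolving the conjecture: the rank-one case, which the paper proves (Theorem \ref{thm3}) by splitting $K_X^{*}K_X=K_1+K_2$ via the $\Vec$ isomorphism and bounding $\lambda_i(K_2)$ by singular values of $-X^{t}\otimes X^{*}$ through Lemma \ref{lemThompson}; and the general weaker bound of Theorem \ref{thm-conj7-weak}, obtained from the same $K_1+K_2$ splitting together with the elementary Lemma \ref{numbers inequality}. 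Your perturbative expansion of $T_{D+N}$ is, as you suspect, too lossy; the paper's route around this is precisely to give up normality of the summands and instead exploit the Kronecker-product structure of $K_1$ and $K_2$ separately.
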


In fact, the summation $\sum_{i=1}^{2k}\lambda_{i}(T_X)$ in Conjecture \ref{conj2A} cannot exceed $2n$. We explain this by introducing the following Conjecture \ref{conj2B} which looks stronger but in fact is equivalent to Conjecture \ref{conj2A}. Before that, we introduce some notations.

Let $x=(x_1,x_2,\cdots,x_n)\in\mathbb{R}^n$. We rearrange the components of $x$ in decreasing order and obtain a vector $x^\downarrow=(x_1^\downarrow,x_2^\downarrow,\cdots,x_n^\downarrow)$ where
$$x_1^\downarrow\geq x_2^\downarrow\geq\cdots,\geq x_n^\downarrow.$$

\begin{defn} \cite{Z11}
For $x=(x_1,x_2,\cdots,x_n)$ and $y=(y_1,y_2,\cdots,y_n)$ in
$\mathbb{R}^n$, we say that $x$ is weakly majorized by $y$, written as $x\prec y$, if
$$\sum_{i=1}^kx_i^\downarrow\leq \sum_{i=1}^ky_i^\downarrow,\quad k=1,2,\cdots,n .$$
\end{defn}

\begin{defn} \cite{RAB09}
A multiset may be formally defined as a $2-tuple (A, m)$ where $A$ is the underlying set of the multiset, formed from its distinct elements, and
$\displaystyle m\colon A\to \mathbb {N} _{\geq 1}$ is a function from $A$ to the set of the positive integers, giving the multiplicity, that is, the number of occurrences, of the element $a$ in the multiset as the number $m(a)$.
\end{defn}

If $A=\{a_1,a_2,\dots,a_n\} $ is a finite set, the multiset $(A, m)$ is often represented as $\{a_1^{m(a_1)},a_2^{m(a_2)},\dots,a_n^{m(a_n)}\} $. For example, the multiset $\{a,a,b\}$ is written as $\{a^2,b\}$.

\begin{conj}\label{conj2B}
For $X\in M(n,\mathbb{R})$ with $\|X\|=1$, the set $\lambda(T_X)$ of eigenvalues of $T_X$
 is weakly majorized by the multiset $\{2^{2},1^{2n-4},0^{\(n-1\)^2+1}\}$.
\end{conj}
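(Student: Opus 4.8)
The real content attached to Conjecture~\ref{conj2B} is that it is \emph{equivalent} to Conjecture~\ref{conj2A}, and this equivalence is an unconditional statement; the plan is to prove both implications, which puts Conjecture~\ref{conj2B} on exactly the same footing as Conjecture~\ref{conj2A}. Write $\mu_i=\lambda_i(T_X)$ for the eigenvalues of $T_X$ listed in decreasing order, and use three elementary facts. First, $T_X$ is positive semidefinite, since $\langle T_X Y,Y\rangle=\|[X,Y]\|^2\ge 0$; hence every $\mu_i\ge 0$ and the partial sums $\sum_{i=1}^{j}\mu_i$ are non-decreasing in $j$, with total $\sum_{i=1}^{n^2}\mu_i=\Tr T_X$. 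Second, by Proposition~\ref{prop2} the eigenvalues occur in equal consecutive pairs, $\mu_{2i-1}=\mu_{2i}$. Third, a direct computation over the matrix units $E_{ab}$ gives $\Tr T_X=\sum_{a,b}\|[X,E_{ab}]\|^2=2n\|X\|^2-2(\Tr X)^2$, so $\Tr T_X\le 2n$ whenever $\|X\|=1$ — this is precisely the remark, made just before Conjecture~\ref{conj2B}, that the sums in Conjecture~\ref{conj2A} cannot exceed $2n$.

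First I would record the partial sums $\sigma_j:=\sum_{i=1}^{j}\nu_i$ of the majorizing multiset $\nu=\{2^{2},1^{2n-4},0^{(n-1)^2+1}\}$, which has $2+(2n-4)+((n-1)^2+1)=n^2=\dim M(n,\mathbb{R})$ entries: one gets $\sigma_1=2$, $\sigma_j=j+2$ for $2\le j\le 2n-2$, and $\sigma_j=2n$ for all $j\ge 2n-2$; in particular $\sigma_{2k}=\min\{2k+2,\,2n\}$. With this in hand, Conjecture~\ref{conj2B}$\Rightarrow$Conjecture~\ref{conj2A} is immediate: the even-index majorization inequalities read $\sum_{i=1}^{2k}\mu_i\le\sigma_{2k}\le 2k+2$, which is exactly \eqref{ineq of conj2A}.

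For the converse, Conjecture~\ref{conj2A}$\Rightarrow$Conjecture~\ref{conj2B}, I must verify $\sum_{i=1}^{j}\mu_i\le\sigma_j$ for every $j=1,\dots,n^2$, and I would split the range of $j$ into three parts. (i) For $j\ge 2n-2$ the inequality is free: $\sum_{i=1}^{j}\mu_i\le\sum_{i=1}^{n^2}\mu_i=\Tr T_X\le 2n=\sigma_j$. (ii) For even $j=2k$ with $1\le k\le n-1$, Conjecture~\ref{conj2A} directly gives $\sum_{i=1}^{2k}\mu_i\le 2k+2=\sigma_{2k}$. (iii) For odd $j=2k-1$ with $1\le k\le n-1$ — the only genuinely new inequalities — the pairing $\mu_{2k-1}=\mu_{2k}$ yields the exact identity
\begin{equation*}
2\sum_{i=1}^{2k-1}\mu_i=\sum_{i=1}^{2k-2}\mu_i+\sum_{i=1}^{2k}\mu_i ,
\end{equation*}
so the odd partial sum is the average of its two neighbouring even partial sums; bounding the right-hand side by Conjecture~\ref{conj2A} at indices $k-1$ and $k$ (with the convention that the $k-1=0$ sum is $0$) gives $2\sum_{i=1}^{2k-1}\mu_i\le 2k+(2k+2)=4k+2$, hence $\sum_{i=1}^{2k-1}\mu_i\le 2k+1=\sigma_{2k-1}$ for $k\ge 2$, while the case $k=1$ is just $\mu_1\le 2$, which is \eqref{ineq of conj2A} at $k=1$ combined with $\mu_1=\mu_2$. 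Since every $j\in\{1,\dots,n^2\}$ lies in (i), (ii) or (iii), the majorization follows and the two conjectures are equivalent.

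Apart from the $\sigma_j$ computation the argument is pure bookkeeping, and the only subtle point — which I expect to be the one place requiring care — is that Conjecture~\ref{conj2A} by itself bounds $\sum_{i=1}^{2k}\mu_i$ only by $2k+2$, which is weaker than $2n$ once $k\ge n-1$; thus range (i) genuinely needs the trace bound $\Tr T_X\le 2n$ (equivalently the identity $\Tr T_X=2n\|X\|^2-2(\Tr X)^2$), and range (iii) genuinely needs the pairing of Proposition~\ref{prop2}. Both inputs are already at hand, so the proof is complete once these ranges are checked, and the resulting equivalence is itself an unconditional theorem, independent of whether the Lu--Wenzel conjectures hold.
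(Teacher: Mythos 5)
Your proposal is correct and takes essentially the same route as the paper's own proof of the equivalence (Proposition \ref{eq-45}): the forward implication is read off from the even-index partial sums, and the converse is reduced to the even sums supplied by Conjecture \ref{conj2A}, the odd sums via the pairing $\lambda_{2i-1}(T_X)=\lambda_{2i}(T_X)$ from Proposition \ref{prop2}, and the tail via $\Tr T_X\leq 2n$. The only cosmetic differences are that you bound the odd partial sums directly by averaging the two neighbouring even sums where the paper argues by contradiction, and you derive $\lambda_1(T_X)\leq 2$ and the trace identity by hand instead of citing the BW inequality and Corollary \ref{cortrTX}.
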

It is just $$\sum_{i=1}^{2k}\lambda_{i}(T_X)\leq 2n,\quad \textit{for } k\geq n,$$ that looks stronger in the assertion here than in that of Conjecture \ref{conj2A}.
Another equivalent conjecture that also looks stronger is the following Conjecture \ref{conj2C} by omitting the second assumption of Conjecture \ref{conj2}.

\begin{conj}\label{conj2C}
Let $B,B_2,\cdots,B_m\in M(n,\mathbb{R})$ be matrices with $\Tr (B_\alpha B^*_\beta)=0$  for any $2\leq\alpha\neq \beta\leq m$. Then
$$\sum^m_{\alpha=2}\|\[B,B_\alpha\]\|^2\leq \(2\max_{2\leq \alpha\leq m}\|B_\alpha\|^2+\sum^m_{\alpha=2}\|B_\alpha\|^2\)\|B\|^2.$$
\end{conj}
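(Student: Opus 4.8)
The plan is to establish that Conjecture~\ref{conj2C} is equivalent to Conjecture~\ref{conj2A}; together with the equivalence of Conjectures~\ref{conj2}, \ref{conj2A} and \ref{conj2B} discussed above, this will show that all four conjectures are equivalent, so that Conjecture~\ref{conj2C} is indeed a (formally stronger-looking) reformulation of the LW Conjecture. Throughout I will use that $T_X$ is self-adjoint and positive semidefinite on $M(n,\mathbb{R})$, that $\langle T_XY,Y\rangle=\|[X,Y]\|^2$, and, for $\|X\|=1$, I write $S_q=S_q(X):=\sum_{i=1}^{q}\lambda_i(T_X)$, where $\lambda_i(T_X):=0$ for $i>n^2$. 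One direction is immediate: to deduce Conjecture~\ref{conj2A}, given $X$ with $\|X\|=1$ and $1\le k\le[\frac{n^2}{2}]$ I would apply Conjecture~\ref{conj2C} to $B=X$ together with $B_2,\dots,B_{2k+1}$ an orthonormal system of eigenvectors of $T_X$ for the eigenvalues $\lambda_1(T_X),\dots,\lambda_{2k}(T_X)$; the orthogonality hypothesis is then trivial, the left-hand side equals $\sum_{i=1}^{2k}\lambda_i(T_X)$ and the right-hand side equals $2k+2$, which is precisely~\eqref{ineq of conj2A}.

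For the converse, let $B,B_2,\dots,B_m$ be as in Conjecture~\ref{conj2C}; one may assume $\|B\|=1$ (the case $B=0$ being trivial), discard the $B_\alpha$ that vanish, set $X=B$ and $p=m-1$, and write $B_\alpha=c_\alpha e_\alpha$ with $c_\alpha=\|B_\alpha\|>0$ and $\{e_\alpha\}$ orthonormal. Then
\begin{equation*}
\sum_{\alpha=2}^{m}\|[B,B_\alpha]\|^2=\sum_{\alpha=2}^{m}c_\alpha^{2}\,\langle T_Xe_\alpha,e_\alpha\rangle\le\sum_{j=1}^{p}t_j\,\lambda_j(T_X),
\end{equation*}
where $t_1\ge\cdots\ge t_p\ge 0$ is the decreasing rearrangement of $(c_\alpha^{2})$ and the last step is Ky Fan's maximum principle in weighted form (for a self-adjoint $A$ and decreasing nonnegative weights $w_j$, $\sum_j w_j\langle Ae_j,e_j\rangle\le\sum_j w_j\lambda_j(A)$ over orthonormal families $\{e_j\}$). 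Since the right-hand side of Conjecture~\ref{conj2C} equals $2\max_j t_j+\sum_j t_j=2t_1+\sum_j t_j$, it suffices to prove
\begin{equation*}
\sum_{j=1}^{p}t_j\,\lambda_j(T_X)\le 2t_1+\sum_{j=1}^{p}t_j\qquad\text{for all }t_1\ge\cdots\ge t_p\ge 0 .
\end{equation*}
The map $t\mapsto\sum_j t_j\lambda_j(T_X)-2t_1-\sum_j t_j$ is linear on the cone $\{t_1\ge\cdots\ge t_p\ge 0\}$, so its maximum over the slice $\{\sum_j t_j=1\}$ is attained at an extreme point of that polytope, i.e. at $t=(\tfrac1q,\dots,\tfrac1q,0,\dots,0)$ for some $1\le q\le p$, where the inequality reads $\tfrac1q S_q\le\tfrac2q+1$, that is, $S_q\le q+2$. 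Conversely, applying Conjecture~\ref{conj2C} to $B=X$ with $B_2,\dots,B_{q+1}$ orthonormal eigenvectors of $T_X$ gives $S_q\le q+2$; hence Conjecture~\ref{conj2C} is equivalent to the statement that $S_q(X)\le q+2$ for every $X$ with $\|X\|=1$ and every $q\ge1$.

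Finally, I would deduce this statement from Conjecture~\ref{conj2A}. For even $q=2k$ it is~\eqref{ineq of conj2A} when $k\le[\frac{n^2}{2}]$, and otherwise $q>n^2$, so $S_q=\Tr T_X=2n\|X\|^2-2(\Tr X)^2\le 2n\le q+2$ (using $q>n^2\ge 2n-2$); thus $S_q\le q+2$ for every even $q$. For odd $q=2k+1$ I would invoke Proposition~\ref{prop2}, $\lambda_{2i-1}(T_X)=\lambda_{2i}(T_X)$, which yields $S_{2k+1}=\tfrac12\big(S_{2k}+S_{2k+2}\big)\le\tfrac12\big((2k+2)+(2k+4)\big)=2k+3=q+2$ by the even case. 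This closes the chain of equivalences. The hard part is exactly this odd case: the crude estimate $S_{2k+1}\le S_{2k}+\lambda_1(T_X)\le 2k+4$ loses one unit, and only the eigenvalue pairing furnished by Proposition~\ref{prop2} recovers the sharp $2k+3$; correspondingly, it is this pairing that forces the coefficient of $\max_\alpha\|B_\alpha\|^2$ in Conjecture~\ref{conj2C} to be $2$ rather than $1$, since the extreme-point value $\tfrac2q+1$ must match $\tfrac{q+2}{q}$. The remaining ingredients --- the reduction to sorted weight vectors, the weighted Ky Fan principle, and the identity $\Tr T_X=2n\|X\|^2-2(\Tr X)^2$ --- are routine.
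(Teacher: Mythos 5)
Your argument is correct and follows essentially the same route as the paper's Proposition \ref{prop equ5 7}: the forward direction tests Conjecture \ref{conj2C} on eigenvectors of $T_X$, and the converse combines the Ky Fan principle with Abel summation over the sorted weights $\|B_\alpha\|^2$ (your extreme-point reduction is just this summation by parts in disguise) together with the bound $\sum_{i=1}^{q}\lambda_i(T_X)\le q+2$ for all $q$. Your derivation of the odd-$q$ case from Proposition \ref{prop2} via the averaging identity $S_{2k+1}=\tfrac12(S_{2k}+S_{2k+2})$ is exactly the content of the paper's Proposition \ref{eq-45}, which it proves by the equivalent contradiction argument.
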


We summarize the relations of these conjectures in the following theorem.

\begin{thm}\label{equiv conj}
\begin{enumerate}
\item Conjectures \ref{conj2}, \ref{conj2A}, \ref{conj2B} and \ref{conj2C} are equivalent to each other.
\item If one of the above conjectures is true, then Conjectures \ref{conj1} and \ref{conj3} hold.
\end{enumerate}
\end{thm}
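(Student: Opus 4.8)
The plan is to restate Conjectures~\ref{conj2}--\ref{conj2C} as spectral assertions about the positive self-adjoint operator $T_X$ and to exploit the pairing $\lambda_{2i-1}(T_X)=\lambda_{2i}(T_X)$ of Proposition~\ref{prop2} together with a symplectic structure hidden in the trace conditions. Writing $X=B/\|B\|$ and introducing the skew-symmetric form $\omega_X(Y,Z):=\Tr\big(Y[X,Z]\big)$ on $M(n,\mathbb R)$, the identities $\|[B,B_\alpha]\|^2=\|B\|^2\langle T_XB_\alpha,B_\alpha\rangle$ and $\Tr\big(B_\alpha[B,B_\beta]\big)=\|B\|\,\omega_X(B_\alpha,B_\beta)$ show that hypothesis (ii) of Conjecture~\ref{conj2} says precisely that $W:=\Span\{B_2,\dots,B_m\}$ is $\omega_X$-isotropic, while (i) lets me put $B_\alpha=r_\alpha e_\alpha$ with $\{e_\alpha\}$ orthonormal. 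The elementary fact that $\sum_\alpha w_\alpha p_\alpha\le c\max_\alpha w_\alpha$ whenever $w_\alpha\ge0$ and $\sum_{\alpha\in S}p_\alpha\le c$ for every $S$ (take $S=\{\alpha:p_\alpha>0\}$), applied with $p_\alpha=\langle T_Xe_\alpha,e_\alpha\rangle-1$ and $c=1$, shows that Conjecture~\ref{conj2} is equivalent to the bound $\Tr(P_WT_XP_W)\le\dim W+1$ for every $\omega_X$-isotropic $W$ and every unit $X$ ($P_W$ being orthogonal projection), the converse amounting to Conjecture~\ref{conj2} applied to an orthonormal basis of such a $W$. The same device turns Conjecture~\ref{conj2C} (no isotropy) into $\Tr(P_WT_XP_W)\le\dim W+2$ for \emph{all} $W$, which by Ky Fan's inequality is $\sum_{i=1}^{t}\lambda_i(T_X)\le t+2$ for all $t$; and since $\lambda_{2i-1}=\lambda_{2i}$ gives $\sum_{i=1}^{2k-1}\lambda_i=\tfrac12\big(\sum_{i=1}^{2k-2}\lambda_i+\sum_{i=1}^{2k}\lambda_i\big)$, this is equivalent to its even cases $\sum_{i=1}^{2k}\lambda_i\le2k+2$, i.e.\ to Conjecture~\ref{conj2A}. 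Thus $\ref{conj2C}\Leftrightarrow\ref{conj2A}$.

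\textbf{The core: $\ref{conj2}\Leftrightarrow\ref{conj2A}$.} Here I would identify the above isotropic maximum spectrally. The operator $JY:=[X,Y]^{*}$ (the transpose of the commutator) satisfies $\langle Y,JZ\rangle=\omega_X(Y,Z)$ and is skew-adjoint with $J^{2}=-T_X$ and $[J,T_X]=0$; hence $\ker J=\ker T_X$, every positive eigenspace $V_\mu$ of $T_X$ is $J$-invariant and even-dimensional with $\mu^{-1/2}J|_{V_\mu}$ an orthogonal complex structure, distinct $V_\mu$ are $\omega_X$-orthogonal, and $\omega_X$ vanishes on $\ker T_X$ and is symplectic on each $V_\mu$. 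Now let $W$ be $\omega_X$-isotropic (discard $W\cap\ker T_X$, which changes nothing): isotropy reads $JW\perp W$, whence $JW\cap W=0$ and $\dim(W\oplus JW)\le2\dim W$; moreover, in an orthonormal basis $\{e_\alpha\}$ of $W$ the matrices $\widetilde T=(\langle T_Xe_\alpha,e_\beta\rangle)$ and $S=(\langle T_Xe_\alpha,T_Xe_\beta\rangle)$ obey $S-\widetilde T^{2}\succeq0$ (a Gram matrix), so $\Tr(P_{JW}T_XP_{JW})=\Tr(\widetilde T^{-1}S)\ge\Tr\widetilde T=\Tr(P_WT_XP_W)$; combined with $\Tr(P_{W\oplus JW}T_XP_{W\oplus JW})=\Tr(P_WT_XP_W)+\Tr(P_{JW}T_XP_{JW})$ and Ky Fan's inequality this yields $\Tr(P_WT_XP_W)\le\tfrac12\sum_{i=1}^{2\dim W}\lambda_i(T_X)$. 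Equality is attained by taking $W$ to be a Lagrangian of the top eigenspaces of $T_X$ (legitimate because the $V_\mu$ are $\omega_X$-orthogonal and symplectic), using $\lambda_1+\lambda_3+\dots+\lambda_{2t-1}=\tfrac12\sum_{i=1}^{2t}\lambda_i$. Therefore the maximum of $\Tr(P_WT_XP_W)$ over $\omega_X$-isotropic $W$ of dimension $t$ is exactly $\tfrac12\sum_{i=1}^{2t}\lambda_i(T_X)$, so the isotropic bound $\le t+1$ is literally $\sum_{i=1}^{2t}\lambda_i\le2t+2$, that is, Conjecture~\ref{conj2A}.

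\textbf{$\ref{conj2A}\Leftrightarrow\ref{conj2B}$ and part (2).} The partial sums of $\{2^{2},1^{2n-4},0^{(n-1)^2+1}\}$ at even position $2k$ equal $\min(2k+2,2n)$, so Conjecture~\ref{conj2B} implies Conjecture~\ref{conj2A}; conversely \ref{conj2A} with $\lambda_{2i-1}=\lambda_{2i}$ gives $\sum_{i=1}^{j}\lambda_i\le j+2$ for all $j\ge2$, and the direct computation $\sum_{i=1}^{n^2}\lambda_i(T_X)=\Tr T_X=2n\|X\|^2-2(\Tr X)^2\le2n$ supplies $\sum_{i=1}^{j}\lambda_i\le2n$ for $j\ge2n-2$; these are exactly the weak-majorization inequalities of~\ref{conj2B}. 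Finally, part (2) is the implication of Lu--Wenzel \cite{LW16}: assuming the (now equivalent) Conjecture~\ref{conj2}, one obtains Conjecture~\ref{conj1} by reducing, as in the proof of the DDVV inequality, to an orthonormal family $E_1,\dots,E_p$ spanning $W=\Span\{B_\alpha\}$ --- on which the hypothesis $\Tr(B_\alpha[B_\gamma,B_\beta])=0$ persists --- and applying Conjecture~\ref{conj2} with $B=E_j$ to $\{E_i:i\ne j\}$ (orthogonal, orthogonal to $E_j$, satisfying (ii)) to get $\sum_i\|[E_j,E_i]\|^2\le p$, hence $\sum_{\alpha,\beta}\|[E_\alpha,E_\beta]\|^2\le p^2$; Conjecture~\ref{conj3} is the case $k=2$ of Conjecture~\ref{conj2A}, rewritten through $\lambda_1+\lambda_3=\tfrac12\sum_{i=1}^{4}\lambda_i$.

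\textbf{Main obstacle.} The step I expect to be hardest is setting up the symplectic structure --- verifying the algebraic identities for $J=[X,\,\cdot\,]^{*}$ and deducing that $\omega_X$ is nondegenerate on each eigenspace of $T_X$ --- and, resting on it, the ``isotropic Ky Fan'' identity $\max\Tr(P_WT_XP_W)=\tfrac12\sum_{i=1}^{2\dim W}\lambda_i(T_X)$; once those are in hand the remaining equivalences are bookkeeping with majorization and the elementary $p_\alpha$-optimization above. The one delicate point in part (2) is the reduction of Conjecture~\ref{conj1} to orthonormal families, which is already carried out in \cite{LW16}.
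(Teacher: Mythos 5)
Your part (1) is essentially sound and follows the same architecture as the paper's Section \ref{sect-eq}: your $J=[X,\cdot]^{*}$ is the paper's $\widetilde S_X$, your ``isotropic Ky Fan'' identity is exactly Lemma \ref{lemma 2} combined with the Lagrangian construction used in Proposition \ref{prop equ2 5}, and your ``elementary fact'' with $p_\alpha=\langle T_Xe_\alpha,e_\alpha\rangle-1$ is the paper's summation by parts. Your Gram-matrix proof that $\Tr(P_{JW}T_XP_{JW})\ge\Tr(P_WT_XP_W)$ is a clean alternative to the paper's singular-value-decomposition argument for Lemma \ref{lemma 2} (and the observation that $\langle T_Xw,w\rangle=0$ forces $T_Xw=0$ does legitimize discarding $W\cap\ker T_X$ to invert $\widetilde T$). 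Be aware that the paper proves the complex version, where $\widetilde S_X$ is conjugate-linear and Proposition \ref{prop2} needs a separate argument; your real-linear $J$ only covers the real statement as posed.

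The genuine gap is in part (2), in deriving Conjecture \ref{conj1} from Conjecture \ref{conj2}. You cannot ``reduce to an orthonormal family spanning $W$'': both sides of \eqref{csymDDVV} are invariant under orthogonal substitutions $B_\alpha\mapsto\sum_\beta p_{\alpha\beta}B_\beta$, which lets you assume the $B_\alpha$ mutually orthogonal, but not under the rescalings needed to make them orthonormal, so bounding $\sum\|[E_\alpha,E_\beta]\|^2$ for an orthonormal basis of $W$ says nothing about $\sum\|[B_\alpha,B_\beta]\|^2$. Worse, once the norms $r_\alpha=\|B_\alpha\|$ are unequal, the strategy of applying Conjecture \ref{conj2} with $B=B_j$ and summing over $j$ does not close: it gives
\begin{equation*}
\sum_{\alpha,\beta}\|[B_\alpha,B_\beta]\|^2\le\Big(\sum_\alpha r_\alpha^2\Big)^2+\Big(\sum_j r_j^2\max_{i\ne j}r_i^2-\sum_j r_j^4\Big),
\end{equation*}
and the second bracket is strictly positive in general (for three mutually orthogonal matrices with $r_1=r_2=1$, $r_3=t\in(0,1)$ it equals $t^2-t^4>0$), so the summed bound overshoots the target $(\sum_\alpha r_\alpha^2)^2$. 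This is precisely why Lu--Wenzel, and the paper, argue via an extremal configuration: take the optimal constant $a$, normalize by the invariances, expand the resulting equality as a quadratic in $t^2=\|A_1\|^2$ to isolate $\|A_1\|^2=a\sum_{\alpha>1}\|[A',A_\alpha]\|^2-\sum_{\alpha>1}\|A_\alpha\|^2$, and only then apply Conjecture \ref{conj2} with matrices of unequal norms, exploiting the $\max_\alpha\|B_\alpha\|^2$ term, to force $a\ge1$. Your derivation of Conjecture \ref{conj3} from Conjecture \ref{conj2A} with $k=2$ is correct.
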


%
%
%

Since the BW inequality (resp.  the DDVV inequality) holds also for complex (resp. complex symmetric) matrices (cf. \cite{BW08}, \cite{GLZ18}), we can also expect for the same conjectures as above with all matrices being complex matrices\footnote{Notice that for the complex version, the vanishing conditions in the conjectures should be in the form of taking trace other than Hermitian inner product, since trace is complex linear while Hermitian inner product is not.}. In fact we will prove the relations of Theorem \ref{equiv conj} between these conjectures in complex version. Hence we call Conjecture \ref{conj2} for complex matrices the complex LW Conjecture. Obviously, the complex LW conjecture implies the real LW conjecture. For example, we restate the complex LW Conjecture in the forms of Conjectures \ref{conj2A} and \ref{conj2B} in the following. Notice that now the map $T_X$ is a self-dual (Hermitian) positive semi-definite operator on the space $M(n,\mathbb{C})$ of complex matrices.
\begin{conj}\label{conj2D}
{$($Complex LW Conjecture \ref{conj2A}$)$.} For $X\in M(n,\mathbb{C})$ with $\|X\|=1$, we have
$$\sum_{i=1}^{2k}\lambda_{i}(T_X)\leq2k+2, \quad k=1,\cdots,[\frac{n^2}{2}].$$
\end{conj}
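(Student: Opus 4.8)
The plan is to prove Conjecture \ref{conj2D} in two stages: first reduce it to the complex LW Conjecture (the $\mathbb{C}$-version of Conjecture \ref{conj2}) by transporting the equivalences of Theorem \ref{equiv conj}(1) to $M(n,\mathbb{C})$, and then attack the resulting eigenvalue inequality directly. The first stage is essentially formal: the proof of Theorem \ref{equiv conj}(1) uses only linear algebra, so it carries over with the same structure once one respects the point flagged in the footnote --- in the vanishing hypotheses such as $\Tr(B_\alpha[B,B_\beta])=0$ one must keep the ($\mathbb{C}$-bilinear) trace pairing rather than the Hermitian inner product. The operator $T_X$ is still Hermitian and positive semi-definite for $\langle A,B\rangle=\Tr(AB^*)$, because $(\mathrm{ad}_X)^*=\mathrm{ad}_{X^*}$ and $\langle T_XY,Y\rangle=\|[X,Y]\|^2$, so $T_X=(\mathrm{ad}_X)^*\,\mathrm{ad}_X$; by the Ky Fan maximum principle $\sum_{i=1}^{2k}\lambda_i(T_X)$ equals $\max\{\Tr(T_XP):P=P^*=P^2,\ \rank P=2k\}=\max\{\sum_{j=1}^{2k}\|[X,Y_j]\|^2:\{Y_1,\dots,Y_{2k}\}\ \text{orthonormal}\}$. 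The dictionary ``orthonormal $2k$-frame $\leftrightarrow$ rank-$2k$ projection $\leftrightarrow$ the matrices $B_2,\dots,B_m$ of Conjecture \ref{conj2}'' then matches the two conjectures, with the pairing $\lambda_{2i-1}(T_X)=\lambda_{2i}(T_X)$ (Proposition \ref{prop2}, valid over $\mathbb{C}$ since $Y\mapsto Y^*$ carries $T_X$ to the isospectral operator $\mathrm{ad}_X\mathrm{ad}_{X^*}$) producing the factor $2$ in Conjecture \ref{conj2C}, and the identity $\sum_i\lambda_i(T_X)=\Tr T_X=2n-2|\Tr X|^2\le 2n$ producing the ``$2n$'' clause in Conjecture \ref{conj2B}.

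For the second stage I would normalize using the two free moves $\mathrm{ad}_{X+cI}=\mathrm{ad}_X$ and unitary conjugation (which preserves the spectrum of $T_X$) to assume $\Tr X=0$ and $X$ upper triangular, and then peel off the leading term. Writing $\mu_i$ for the common value $\lambda_{2i-1}=\lambda_{2i}$, the target $\sum_{i=1}^{2k}\lambda_i(T_X)\le 2k+2$ becomes $\sum_{i=1}^{k}\mu_i\le k+1$; since $\mu_1=\lambda_1(T_X)=\max_{\|Y\|=1}\|[X,Y]\|^2\le 2$ is exactly the complex BW inequality, it remains to bound the tail, $\sum_{i=2}^{k}\mu_i\le k-1$. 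The idea is to restrict $T_X$ to the invariant subspace orthogonal to its top eigenspace and to recognize the restricted quadratic form $\sum_j\|[X,Y_j]\|^2$, over orthonormal $Y_j$ there, as an instance of a DDVV-type inequality for the commutators $[X,Y_j]$ --- for which the sharp constant over $M(n,\mathbb{C})$ is $\tfrac43$ (\cite{GLZ18}) --- and then to absorb the gap between $\tfrac43$ and the target of $1$ per eigenvalue using the orthogonality of these $Y_j$ to the leading eigenvectors. The same rank-$4$ restriction applied to $\lambda_1+\lambda_3=\mu_1+\mu_2$ is the natural route to Conjecture \ref{conj3}, though with a non-sharp constant, consistent with the weaker bound $2+\sqrt{10}/2$ recorded in the abstract.

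The hard part is the tail estimate. Since Conjecture \ref{conj2D} specializes to both the (complex) BW inequality and the DDVV inequality, any complete proof must in particular contain proofs of both, and indeed BW alone only controls $\mu_1$ while the complex DDVV inequality is not sharp (constant $\tfrac43$ rather than $1$), so neither input on its own forces $\sum_{i=2}^{k}\mu_i\le k-1$; a full argument seems to require a genuinely new commutator inequality interpolating between BW and DDVV. I therefore expect this plan to establish Conjecture \ref{conj2D} only in the cases where the tail spectrum of $T_X$ can be computed or bounded by hand --- small $n$, $X$ normal, or $T_X$ of small rank or with few distinct eigenvalues --- together with the full set of complex equivalences of Theorem \ref{equiv conj}, the weaker bound for Conjecture \ref{conj3}, and a partial answer to Question \ref{Lu-Wenzel ques}.
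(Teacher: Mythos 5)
The statement you were asked about is a \emph{conjecture}: the paper does not prove Conjecture \ref{conj2D} in general, and your proposal correctly recognizes this, so the right benchmark is whether your plan reproduces what the paper actually establishes (the equivalences of Theorem \ref{equiv conj} over $\mathbb{C}$, the special cases of Theorem \ref{thm-special-cases}, and the weaker bounds of Theorems \ref{thm-conj3-weak} and \ref{thm-conj7-weak}). Your stage one is essentially the paper's Section \ref{sect-eq}, but two points in your sketch are not right as stated. First, your justification of $\lambda_{2i-1}(T_X)=\lambda_{2i}(T_X)$ over $\mathbb{C}$ --- that $Y\mapsto Y^*$ makes $T_X$ isospectral to $\mathrm{ad}_X\mathrm{ad}_{X^*}$ --- proves nothing, since $A^*A$ and $AA^*$ are always isospectral; the paper's Proposition \ref{prop2} instead uses the \emph{anti-linear} map $\widetilde{S}_X(Y)=[X,Y]^*$ satisfying $T_X=-\widetilde{S}_X^2$ to pair each positive eigenvector $Y$ with the orthogonal eigenvector $\widetilde{S}_XY$, and this pairing (not mere isospectrality) is what forces even multiplicities. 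Second, the Ky Fan dictionary ``orthonormal frame $\leftrightarrow$ the $B_\alpha$ of Conjecture \ref{conj2}'' misses the role of hypothesis (ii), $\Tr(B_\alpha[B,B_\beta])=0$: without it one only gets the weaker bound of Conjecture \ref{conj2C} (with the factor $2\max\|B_\alpha\|^2$). The paper encodes (ii) as the statement that $\Span_{\mathbb{C}}\{B_\alpha\}$ is isotropic for the skew-symmetric operator $S_X$, and the key technical input is Lemma \ref{lemma 2}, which bounds $\Tr T_X|_W$ for an isotropic $W$ by $\sum_{i=1}^m\lambda_{2i-1}(T_X)$; combined with Abel summation over the (non-normalized) norms $\|B_\alpha\|^2$ this gives the equivalence. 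Your frame/projection picture does not by itself produce either the isotropy constraint or the weighting.

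For stage two, your proposed route through the complex DDVV constant $\tfrac43$ is different from the paper's and, as you acknowledge, does not close; the paper's partial results come instead from vectorization: $\lambda(T_X)=\lambda(K_1+K_2)$ with $K_1=I\otimes X^*X+\overline{X}X^t\otimes I$ and $K_2=-X^t\otimes X^*-\overline{X}\otimes X=2(B^t\otimes B-A^t\otimes A)$ for the Cartesian decomposition $X=A+B$, followed by Weyl/Ky Fan inequalities for sums of Hermitian matrices and elementary optimization over the eigenvalues of $A$ and $B$ (giving $2k+1+2\sqrt{k}$ and $\tfrac{4+\sqrt{10}}{2}$). Likewise the special cases are handled by explicit spectra ($\lambda(T_X)=\{|x_i-x_j|^2\}$ for normal $X$ plus Lu's combinatorial lemma; the exact characteristic polynomial for $\rank X=1$; the trace identity $\Tr T_X=2n-2|\Tr X|^2$ for $n\le 3$), rather than by your ``restrict to the orthocomplement of the top eigenspace'' scheme, which is not developed far enough to yield any of these.
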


\begin{conj}\label{conj4}
{$($Complex LW Conjecture \ref{conj2B}$)$.} For $X\in M(n,\mathbb{C})$ with $\|X\|=1$, the set $\lambda(T_X)$ of eigenvalues of $T_X$
is weakly majorized by the multiset $\{2^{2},1^{2n-4},0^{\(n-1\)^2+1}\}$. 
\end{conj}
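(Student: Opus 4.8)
The plan is to derive Conjecture \ref{conj4} from the complex LW Conjecture, i.e. the complex form of Conjecture \ref{conj2}. First I would establish the complex analogue of Theorem \ref{equiv conj}(1) exactly as in the real case, so that Conjecture \ref{conj4} becomes equivalent to Conjecture \ref{conj2D}; then, using the complex version of Proposition \ref{prop2} (that $\lambda_{2i-1}(T_X)=\lambda_{2i}(T_X)$ for all $i$), its content reduces to showing, for $X\in M(n,\mathbb{C})$ with $\|X\|=1$,
\begin{equation*}
\sum_{i=1}^{k}\lambda_{2i-1}(T_X)\le k+1,\qquad 1\le k\le n-1.
\end{equation*}
Indeed the range $k\ge n-1$ is for free: since $\langle T_X Y,Y\rangle=\|[X,Y]\|^2\ge 0$, and a one-line computation gives $\Tr T_X=2n-2|\Tr X|^2\le 2n$, we get $\sum_{i=1}^{2k}\lambda_i(T_X)\le\Tr T_X\le 2n$, which is exactly the sum of the $2k$ largest entries of $\{2^{2},1^{2n-4},0^{(n-1)^2+1}\}$ once $k\ge n-1$.

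For the remaining range I would use the variational description
\begin{equation*}
\sum_{i=1}^{2k}\lambda_i(T_X)=\sup\Big\{\sum_{\alpha=1}^{2k}\|[X,Y_\alpha]\|^2:\ \{Y_\alpha\}\ \text{an orthonormal system in }M(n,\mathbb{C})\Big\},
\end{equation*}
pass by compactness and unitary invariance to a joint maximizer $(X,\{Y_\alpha\})$ with $\|X\|=1$, and write down its Euler--Lagrange equations: varying the $Y_\alpha$ gives $T_X Y_\alpha=\lambda_\alpha Y_\alpha$, while varying $X$ gives an identity tying $X$, $X^*$ and $\sum_\alpha[[X,Y_\alpha],Y_\alpha^*]$ together. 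The bound is attained at $X=E_{12}$ (up to unitary conjugation and a unit scalar), where one checks $\lambda(T_X)=\{2^{2},1^{2n-4},0^{(n-1)^2+1}\}$ exactly, so the goal is to show every critical configuration has value at most $2k+2$. I would first dispose of the cases in which this is a purely finite-dimensional optimization: $X$ \emph{normal} (then $T_X$ is diagonal in the standard basis with eigenvalues $|\mu_j-\mu_k|^2$, and the claim becomes that the sum of the $k$ largest of the $|\mu_j-\mu_k|^2$ is $\le k+1$ subject to $\sum_j|\mu_j|^2=1$); $X$ a \emph{nilpotent of index two} (where a $2\times 2$-block computation expresses $\lambda(T_X)$ through the numbers $s_i^2+s_j^2$ with $\sum_i s_i^2=1$); and $n\le 3$, by the explicit optimization.

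The main obstacle is the general non-normal case, equivalently the classification of the non-normal critical configurations above; this is precisely the still-open (complex) LW Conjecture. The difficulty is structural rather than computational: in the Schur form $X=U(\Lambda+N)U^*$ the strictly upper-triangular part $N$ can carry Frobenius norm of order one and globally reshapes the spectrum of $T_X$, so $\lambda(T_X)$ cannot be controlled perturbatively off the normal model, and the extremizer $X=E_{12}$ is itself a maximally non-normal rank-one nilpotent rather than a perturbation of a normal matrix. Short of a full resolution, the outcomes within reach of this plan are Conjecture \ref{conj4} for all $X$ when $n\le 3$, for normal $X$, and for $X$ whose non-normal part is small; the case $k=2$ (Conjecture \ref{conj3}) with the weakened constant $\lambda_1(T_X)+\lambda_3(T_X)\le 2+\sqrt{10}/2$ in place of $3$; and a non-sharp upper bound for $\sum_{i=1}^{k}\lambda_{2i-1}(T_X)$ toward Question \ref{Lu-Wenzel ques}.
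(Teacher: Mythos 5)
The statement you are addressing is a conjecture, and the paper offers no proof of it either; so the right benchmark is whether your reductions and claimed partial results are accurate, and they are. Your reduction — complex analogue of Theorem \ref{equiv conj}(1), even multiplicities from Proposition \ref{prop2}, the trace identity $\Tr T_X=2n-2|\Tr X|^2$ disposing of the range $k\geq n-1$, and the sharpness of the multiset at a trace-zero rank-one $X$ — matches exactly the paper's own scaffolding (Propositions \ref{prop equ2 5}--\ref{prop equ5 7}, Corollary \ref{cortrTX}, and the characteristic-polynomial computation for $\rank X=1$), and you correctly stop at the point where the problem is genuinely open: the non-normal case is the complex LW Conjecture itself. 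Where you diverge is in the proposed machinery for the partial results. You suggest a Ky Fan variational principle plus Euler--Lagrange analysis of critical configurations $(X,\{Y_\alpha\})$; the paper never sets this up. Its partial results all come from vectorization, Proposition \ref{VecTX}: $\lambda(T_X)=\lambda(K_1+K_2)$ with $K_1=I\otimes X^*X+\overline{X}X^t\otimes I$ and $K_2=-X^t\otimes X^*-\overline{X}\otimes X$, combined with the Hermitian/skew-Hermitian splitting $X=A+B$, Weyl-type interlacing (Lemma \ref{lem3}), Lu's lemma (Lemma \ref{Lucorcom1}) for normal $X$ (Theorem \ref{normallemma}, Corollary \ref{Lucorcom2}), and Thompson's inequality (Lemma \ref{lemThompson}) for the rank-one case (Theorem \ref{thm3}); the non-sharp bounds $\lambda_1+\lambda_3\leq 2+\sqrt{10}/2$ and $\sum_{i=1}^{2k}\lambda_i\leq 2k+1+2\sqrt{k}$ (Theorems \ref{thm-conj3-weak}, \ref{thm-conj7-weak}) come from the same splitting, not from a variational argument. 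The matrix-analytic route has the advantage of yielding those explicit constants with elementary spectral lemmas, while your variational route would need a classification of critical points that is precisely the hard open part.

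Two small calibration points. Your "nilpotent of index two" case is narrower than what the paper actually proves: Theorem \ref{thm3} covers all $\rank X=1$, including nonzero trace, and there the multiset is $\{(2-|\Tr X|^2)^2,1^{2n-4},0^{(n-1)^2+1}\}$. And the paper has no "small non-normal part" perturbative result, so that item in your list of reachable outcomes is not backed by anything in the text; the honestly reachable set is exactly Theorem \ref{thm-special-cases} plus the two weakened bounds.
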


In this paper, we prove the complex LW Conjecture (and hence all conjectures posed above) in some special cases which we conclude in the following.
 \begin{thm}\label{thm-special-cases}
 The complex LW Conjectures \ref{conj2D}, \ref{conj4} and hence all conjectures of this paper are true in one of the following cases:
\begin{enumerate} [ \rm (i) ]
 \item $X\in M(n,\mathbb{C})$ is a normal matrix;
 \item $\rank X=1$;
 \item $n=2,3$.
 \end{enumerate}
 \end{thm}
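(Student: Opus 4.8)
The plan is to verify the complex LW Conjecture \ref{conj2D} (equivalently, by Theorem \ref{equiv conj}, any of the other forms) in each of the three cases, using whichever formulation is most convenient for the structure at hand. For case (i), when $X$ is normal, I would diagonalize $X$ by a unitary $U$, so that $X=U\,\diag(\mu_1,\dots,\mu_n)\,U^*$ with $\sum|\mu_i|^2=1$; since $T_X$ is conjugation-equivariant under unitary change of basis, we may assume $X=\diag(\mu_1,\dots,\mu_n)$ outright. Then $T_X(E_{ij})=(\overline{\mu_i}-\overline{\mu_j})(\mu_i-\mu_j)E_{ij}=|\mu_i-\mu_j|^2 E_{ij}$, so the matrix units $E_{ij}$ form an eigenbasis of $T_X$ with eigenvalues $|\mu_i-\mu_j|^2$ (and $n$ zero eigenvalues on the diagonal $E_{ii}$). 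Thus I reduce Conjecture \ref{conj2B} to the purely combinatorial statement: the multiset $\{|\mu_i-\mu_j|^2 : i\neq j\}$ (together with $n$ zeros) is weakly majorized by $\{2^2,1^{2n-4},0^{(n-1)^2+1}\}$, under the constraint $\sum|\mu_i|^2=1$. The largest few values $|\mu_i-\mu_j|^2$ come in equal pairs $|\mu_i-\mu_j|^2=|\mu_j-\mu_i|^2$, and bounding the top $2k$ of them amounts to: for any $k$ indices pairs, $\sum |\mu_{i}-\mu_{j}|^2 \le k+1$ after the $\|X\|=1$ normalization — this follows from the elementary inequality $|a-b|^2 \le 2(|a|^2+|b|^2)$ refined by the observation that each $\mu_i$ can appear in at most $n-1$ chosen pairs, combined with the DDVV-type bound already available for the top two eigenvalues (Conjecture \ref{conj3} holds here since it is implied). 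I expect this combinatorial optimization — showing the weak-majorization for all $k$ simultaneously, not just $k=1,2$ — to be the only nontrivial point, and it can be handled by a Lagrange-multiplier / extreme-point analysis of the simplex $\{\sum|\mu_i|^2=1\}$, where extremizers have support of size $\le$ a small constant.

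For case (ii), $\rank X=1$, write $X=uv^*$ with $u,v\in\mathbb{C}^n$ and $\|u\|\,\|v\|=1$; normalizing, take $\|u\|=\|v\|=1$. Then $[X,Y]=uv^*Y-Yuv^*$, and a direct computation gives $\langle T_X Y,Y\rangle=\|[X,Y]\|^2=\|uv^*Y-Yuv^*\|^2$. Expanding, $\|uv^*Y\|^2=\|u\|^2\|Y^*v\|^2=\|Y^*v\|^2$, $\|Yuv^*\|^2=\|Yu\|^2$, and the cross term is $-2\Re\big(\langle uv^*Y, Yuv^*\rangle\big)=-2\Re\big((v^*Yu)\overline{(v^*Yu)}\big)=-2|v^*Yu|^2$. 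So $\langle T_X Y,Y\rangle=\|Y^*v\|^2+\|Yu\|^2-2|v^*Yu|^2$. Now $Y\mapsto \|Y^*v\|^2$ is the quadratic form of the rank-$n$ orthogonal projection $P$ onto $\{wv^*:w\in\mathbb C^n\}$ in $M(n,\mathbb C)$, and likewise $Y\mapsto\|Yu\|^2$ is the quadratic form of the rank-$n$ projection $Q$ onto $\{uw^*:w\in\mathbb C^n\}$; the term $-2|v^*Yu|^2$ is $-2$ times the rank-$1$ projection onto $\mathbb C\,uv^*$, which is exactly $PQ$-type overlap. Hence $T_X=P+Q-2R$ where $R$ is a rank-$1$ projection with $R\le P$, $R\le Q$ (indeed $R=PQ=QP$). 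I would then diagonalize $P+Q$ simultaneously with $R$: on $\mathrm{Im}(R)$ the operator $T_X$ acts as $1+1-2=0$; on $(\mathrm{Im}P\cap\mathrm{Im}Q)\ominus\mathrm{Im}R$ it acts as $2$ with multiplicity $\dim(\mathrm{Im}P\cap\mathrm{Im}Q)-1$; and using $\dim\mathrm{Im}P=\dim\mathrm{Im}Q=n$ inside $M(n,\mathbb C)$ (dimension $n^2$), a dimension count shows $\mathrm{Im}P\cap\mathrm{Im}Q$ is exactly $\mathbb C\,uv^*$ when $u,v$ are generic, giving spectrum $\{2^0,\dots\}$ — more carefully the eigenvalues of $P+Q$ are $2$ (mult.\ $\dim(\mathrm{Im}P\cap\mathrm{Im}Q)$), $0$ (mult.\ $n^2-\dim(\mathrm{Im}P+\mathrm{Im}Q)$), and $1\pm\cos\theta_j$ on the remaining "angle" subspaces. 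Subtracting $2R$ only lowers eigenvalues, so the spectrum of $T_X$ is dominated by that of $P+Q$, whose ordered eigenvalues are readily seen to be majorized by $\{2^2,1^{2n-4},0^{\dots}\}$: there is at most one eigenvalue equal to $2$ after removing $\mathrm{Im}R$, the rest lie in $[0,2]$ and sum to $\Tr(P+Q)-2=2n-2$, which forces the weak-majorization bound $\sum_{i\le 2k}\le 2k+2$ directly. The bookkeeping of these projection dimensions when $u\parallel v$ or $u\perp v$ are the edge cases to check separately.

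For case (iii), $n=2,3$, the cleanest route is Conjecture \ref{conj2A}: since $T_X$ is positive semidefinite on $M(n,\mathbb C)$ with $\Tr T_X = 2\|X\|^2\|I\|^2/\!\dots$ — more precisely $\Tr T_X=\sum_i\lambda_i(T_X)=2\|X\|^2 n - 2|\Tr X|^2$ by expanding $\sum_{i,j}\langle T_X E_{ij},E_{ij}\rangle$ (or directly $\Tr T_X = 2n\|X\|^2-2|\tr X|^2\le 2n$), and by Proposition \ref{prop2} the eigenvalues pair up $\lambda_{2i-1}=\lambda_{2i}$. For $n=2$ the space $M(2,\mathbb C)$ is $4$-dimensional, so $\lambda(T_X)=\{\lambda_1,\lambda_1,\lambda_3,\lambda_3\}$ with $2\lambda_1+2\lambda_3=\Tr T_X\le 4$ and $\lambda_1\le 2$ (BW); the only inequalities to check are $\lambda_1+\lambda_1\le 4$ (immediate from $\lambda_1\le 2$) and $\sum_{i=1}^4\lambda_i=\Tr T_X\le 4\le 2k+2|_{k=2}$, so Conjecture \ref{conj2A} holds trivially. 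For $n=3$, $M(3,\mathbb C)$ is $9$-dimensional with eigenvalues $\{\lambda_1,\lambda_1,\lambda_3,\lambda_3,\lambda_5,\lambda_5,\lambda_7,\lambda_7,\lambda_9\}$ summing to $\le 6$; we must check $2\lambda_1\le 4$ (done, BW), $2\lambda_1+2\lambda_3\le 6$, and $\sum_{i\le 6}=2\lambda_1+2\lambda_3+2\lambda_5\le 8$ — the last is automatic since the total is $\le 6<8$. So everything reduces to the single inequality $\lambda_1(T_X)+\lambda_3(T_X)\le 3$ for $n=3$, i.e.\ Conjecture \ref{conj3} in dimension $3$; this in turn is $\|[X,Y_1]\|^2+\|[X,Y_2]\|^2\le 3\|X\|^2$ maximized over orthonormal $Y_1,Y_2$, which is a concrete optimization over a compact manifold of bounded dimension and can be settled either by the reduction to a canonical form for $3\times 3$ matrices (Jordan/Schur form of $X$) followed by a finite calculation, or by invoking the case-(i) normal reduction plus a perturbation argument off the normal locus. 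The main obstacle throughout is case (iii)'s residual $n=3$ instance of $\lambda_1+\lambda_3\le 3$: it is the genuine content, and I expect to dispatch it by putting $X$ in upper-triangular (Schur) form, parametrizing $Y$ against that, and reducing $\lambda_1+\lambda_3$ to a trigonometric/eigenvalue extremal problem whose maximum is attained at a highly symmetric configuration (where $X$ is, up to scaling, a nilpotent Jordan block or a rotation-type normal matrix), at which the value is exactly $3$.
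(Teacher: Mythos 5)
There is a genuine breakdown in your case (ii). Writing $X=uv^*$ with $\|u\|=\|v\|=1$, the cross term in $\|uv^*Y-Yuv^*\|^2$ is $-2\Re\langle uv^*Y,\,Yuv^*\rangle=-2\Re\Tr(uv^*Yvu^*Y^*)=-2\Re\big[(v^*Yv)\overline{(u^*Yu)}\big]$, \emph{not} $-2|v^*Yu|^2$. Consequently $T_XY=vv^*Y+Yuu^*-(u^*Yu)vv^*-(v^*Yv)uu^*$, and the rank-two correction term is an \emph{indefinite} form, not $-2$ times a projection. Your identity $T_X=P+Q-2R$ and the ensuing claim that the spectrum of $T_X$ is dominated by that of $P+Q$ (whose second eigenvalue is $1$) are false: already for $X=E_{12}\in M(2,\mathbb{C})$ one checks $T_XE_{21}=2E_{21}$ and $T_X(E_{11}-E_{22})=2(E_{11}-E_{22})$, so $\lambda_1(T_X)=\lambda_2(T_X)=2$, consistent with the paper's characteristic polynomial $(\lambda-2+|\Tr X|^2)^2(\lambda-1)^{2n-4}\lambda^{(n-1)^2+1}$ but contradicting your bound $\lambda_2(T_X)\leq 1$. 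The paper instead handles rank one via the Kronecker splitting $K_X^*K_X=K_1+K_2$, noting that for $\rank X=1$ one has $\lambda(K_1)=\{2,1^{2(n-1)},0^{(n-1)^2}\}$ and $\sigma(K_3)=\{1,0^{N-1}\}$, so that Weyl's inequality plus the Fan--Hoffman/Thompson bound $\lambda_{2i}(K_2)\leq 2\sigma_{2i}(K_3)=0$ kills the contribution of $K_2$ to the even-indexed sums.

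The other two cases are closer but still incomplete as written. In case (i) you correctly reduce to bounding $\sum$ over $k$ pairs of $|\mu_i-\mu_j|^2$ by $k+1$, but this combinatorial inequality \emph{is} the entire content (it is Lu's lemma, Lemma \ref{Lucorcom1}, which the paper imports from \cite{Lu11}); the elementary bound $|a-b|^2\leq 2(|a|^2+|b|^2)$ together with a vague appeal to Lagrange multipliers does not establish it. In case (iii), $n=2$ is fine, but for $n=3$ you overlook that $2\lambda_1+2\lambda_3=\sum_{i=1}^4\lambda_i(T_X)\leq\Tr T_X=6-2|\Tr X|^2\leq 6$ is \emph{automatic} from positive semi-definiteness of $T_X$ and Corollary \ref{cortrTX}; there is no residual extremal problem to solve, and the Schur-form computation you defer to is both unnecessary and unexecuted. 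With that one-line trace observation, case (iii) closes exactly as in the paper.
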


For the conjectures in general we can only get some weaker results as follows.
\begin{thm}\label{thm-conj3-weak}
For $X\in M(n,\mathbb{C})$ with $\|X\|=1$, we have
 $$\lambda_1(T_X)+\lambda_3(T_X)\leq\frac{4+\sqrt{10}}{2}.$$
\end{thm}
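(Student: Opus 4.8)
The plan is to reduce the bound to two scalar inequalities for $\lambda_1:=\lambda_1(T_X)$ and $\lambda_3:=\lambda_3(T_X)$ and then solve the resulting quadratic constraint on $s:=\lambda_1+\lambda_3$.

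First I would fix the eigenvector data. Since $T_X$ is a Hermitian positive semidefinite operator on $(M(n,\mathbb{C}),\langle\cdot,\cdot\rangle)$ with $\langle T_XY,Y\rangle=\|[X,Y]\|^2$, we may choose orthonormal eigenvectors $Y_1,Y_3$ with $T_XY_1=\lambda_1Y_1$ and $T_XY_3=\lambda_3Y_3$ (if $\lambda_1=\lambda_3$ the corresponding eigenspace has dimension $\geq 2$, so such a choice exists). Put $Z_i:=[X,Y_i]$. Then $\|Z_i\|^2=\langle T_XY_i,Y_i\rangle=\lambda_i$, one has $\langle Z_1,Z_3\rangle=\langle T_XY_1,Y_3\rangle=\lambda_1\langle Y_1,Y_3\rangle=0$, and $[X^*,Z_i]=[X^*,[X,Y_i]]=T_XY_i=\lambda_iY_i$. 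We may also discard the easy cases: if $\lambda_3<1$ then $s<\lambda_1+1\leq 3<\tfrac{4+\sqrt{10}}{2}$ (using $\lambda_1\leq 2$ from the complex BW inequality), and if $\lambda_1<1+\tfrac{\sqrt{10}}{4}$ then $s\leq 2\lambda_1<\tfrac{4+\sqrt{10}}{2}$; so it remains to treat the range $1\leq\lambda_3\leq\lambda_1$ with $\lambda_1\geq 1+\tfrac{\sqrt{10}}{4}$.

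The first ingredient is the complex BW inequality. Applying it to the pair $(X^*,Z_1+Z_3)$ gives $\|[X^*,Z_1+Z_3]\|^2\leq 2\|Z_1+Z_3\|^2$; since $[X^*,Z_1+Z_3]=\lambda_1Y_1+\lambda_3Y_3$ has norm-square $\lambda_1^2+\lambda_3^2$ and $\|Z_1+Z_3\|^2=\lambda_1+\lambda_3$, this reads $\lambda_1^2+\lambda_3^2\leq 2(\lambda_1+\lambda_3)$, i.e. $(\lambda_1-1)^2+(\lambda_3-1)^2\leq 2$ (equivalently, just $\lambda_1\leq 2$ and $\lambda_3\leq 2$). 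The second --- and decisive --- ingredient is the coupling estimate $(\lambda_1-1)(\lambda_3-1)\leq\tfrac{1}{4}$. Geometrically this rules out $(\lambda_1,\lambda_3)$ sitting near the corner $(2,2)$ of the square $[0,2]^2$: it forbids both commutator eigenvalues from being simultaneously near the BW extremal value $2$, so it cannot come from BW applied to any pair lying inside $\mathrm{span}\{Y_1,Y_3\}$ --- on that span $\mathrm{ad}_X$ acts ``diagonally'' and only sees $\lambda_1,\lambda_3\leq 2$. To obtain it I would feed further, carefully chosen matrices built from $X,X^*,Y_i,Z_i$ that genuinely leave $\mathrm{span}\{Y_1,Y_3\}$ into the complex BW inequality --- or into the DDVV-type inequality with constant $\tfrac{4}{3}$ for complex matrices --- using the identities $\|Z_i\|^2=\lambda_i$, $Z_1\perp Z_3$, $[X^*,Z_i]=\lambda_iY_i$ above and tuning the free parameters so that $\tfrac{1}{4}$ emerges as the critical value of a discriminant; alternatively, since the paper also reproves the equality case of the complex BW inequality, a quantitative (stability) version of that rigidity should bound $\lambda_3$ once $\lambda_1$ is pinned near $2$. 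I expect this step to be the principal obstacle.

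Granting the two ingredients, the conclusion is immediate. In the remaining range put $u:=\lambda_1-1\geq v:=\lambda_3-1\geq 0$; then $u^2+v^2\leq 2$ and $uv\leq\tfrac{1}{4}$, so
$$(s-2)^2=(u+v)^2=(u^2+v^2)+2uv\leq 2+\tfrac{1}{2}=\tfrac{5}{2},$$
whence $s=\lambda_1+\lambda_3\leq 2+\sqrt{5/2}=2+\tfrac{\sqrt{10}}{2}=\tfrac{4+\sqrt{10}}{2}$, which is Theorem \ref{thm-conj3-weak}. (If instead $uv\leq 0$, already $(u+v)^2\leq u^2+v^2\leq 2$ yields the same bound.)
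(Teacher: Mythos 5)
Your argument has a genuine gap at precisely the point you flag yourself: the coupling estimate $(\lambda_1-1)(\lambda_3-1)\leq\tfrac14$ is never proved, and it carries the entire weight of the theorem. Your first ingredient, while correct, contains no new information: the inequality $\lambda_1^2+\lambda_3^2\leq2(\lambda_1+\lambda_3)$ is equivalent to $\lambda_1(\lambda_1-2)+\lambda_3(\lambda_3-2)\leq0$, which already follows from $\lambda_1,\lambda_3\in[0,2]$, i.e.\ from the BW inequality applied to each eigenvalue separately (and, contrary to your parenthetical, the disk $(\lambda_1-1)^2+(\lambda_3-1)^2\leq2$ strictly contains the square $[0,2]^2$, so it is the \emph{weaker} statement). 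Without the coupling estimate, the best your scheme yields is $(u+v)^2\leq2+2uv\leq4$, i.e.\ the trivial bound $\lambda_1+\lambda_3\leq4$. The proposed routes to the coupling estimate (feeding further test matrices into BW with tuned parameters, or a stability version of the BW equality case) are not carried out, and it is not clear they can be: in the extremal regime $\lambda_1\to2$ the estimate $uv\leq\tfrac14$ forces $\lambda_3\leq\tfrac54$, which is a strong rigidity statement close in spirit to Conjecture \ref{conj3} itself. So the proposal reduces the theorem to an unproven lemma of comparable difficulty.

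For comparison, the paper's proof takes a completely different route and the constant $\sqrt{10}$ arises elsewhere. Using Proposition \ref{VecTX} one writes $\Vec\circ T_X\circ\Vec^{-1}=K_1+K_2$ with $K_2=2(B^t\otimes B-A^t\otimes A)$ for the Cartesian decomposition $X=A+B$, then bounds $\sum_{i=1}^4\lambda_i(T_X)$ via Weyl's inequalities (Lemma \ref{lem3}) by $4(\sigma_1^2(X)+\sigma_2^2(X))$ plus contributions from the top eigenvalues of $-A^t\otimes A$ and $B^t\otimes B$. These eigenvalues are explicit products $-a_ia_j$ (resp.\ $-b_ib_j$), and the key step is the elementary estimate $6a_1|a_n|+2a_2|a_n|=2|a_n|(3a_1+a_2)\leq2\sqrt{10}\,|a_n|\sqrt{a_1^2+a_2^2}\leq\sqrt{10}\,\|A\|^2$, where $\sqrt{10}=\|(3,1)\|$ enters through Cauchy--Schwarz; the even-multiplicity property $\lambda_1=\lambda_2$, $\lambda_3=\lambda_4$ (Proposition \ref{prop2}) then converts the bound on $\sum_{i=1}^4\lambda_i(T_X)$ into $2(\lambda_1+\lambda_3)\leq4+\sqrt{10}$. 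If you want to salvage your approach, you would need an independent proof of the coupling estimate; as written, the proof is incomplete.
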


\begin{thm}\label{thm-conj7-weak}
For $X\in M(n,\mathbb{C})$ with $\|X\|=1$, we have
$$\sum_{t=1}^{2k}\lambda_{i}(T_X)\leq2k+1+2\sqrt{k},\quad k=1,\cdots,[\frac{n^2}{2}].$$
\end{thm}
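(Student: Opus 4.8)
The plan is to bound the partial sum $\sum_{i=1}^{2k}\lambda_i(T_X)$ by testing $T_X$ against a carefully chosen $2k$-dimensional subspace and then optimizing. Since $T_X$ is self-dual positive semi-definite on $M(n,\mathbb{C})$ with $\|X\|=1$, the Ky Fan variational principle gives
\begin{equation*}
\sum_{i=1}^{2k}\lambda_i(T_X)=\max\left\{\sum_{\alpha=1}^{2k}\langle T_X Y_\alpha,Y_\alpha\rangle : \{Y_\alpha\}_{\alpha=1}^{2k}\text{ orthonormal in }M(n,\mathbb{C})\right\}=\max\sum_{\alpha=1}^{2k}\|[X,Y_\alpha]\|^2,
\end{equation*}
the last equality because $\langle T_X Y,Y\rangle=\langle [X,[X^*,Y]],Y\rangle$ unwinds (via $\Tr([X^*,Y]^*[X^*,Y])$) to $\|[X,Y]\|^2$ — this is the same computation used for the $k=1$ case in the discussion of Question \ref{Lu-Wenzel ques}. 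So it suffices to show that for any orthonormal family $B_2,\dots,B_{2k+1}$ (relabelling $Y_\alpha=B_{\alpha+1}$), writing $B=X$, one has $\sum_{\alpha=2}^{2k+1}\|[B,B_\alpha]\|^2\le 2k+1+2\sqrt{k}$ when $\|B\|=1$.

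The key step is to drop the second hypothesis of Conjecture \ref{conj2} and instead apply Conjecture \ref{conj2C} in a weakened, unconditional form. Here I would feed the orthonormal system directly: by Conjecture \ref{conj2C}'s \emph{type} of estimate — which is what the DDVV/BW machinery actually yields before the extra trace condition is imposed — we expect a bound of the shape $\big(2\max_\alpha\|B_\alpha\|^2+\sum_\alpha\|B_\alpha\|^2\big)\|B\|^2$. With $m-1=2k$ orthonormal blocks each of norm $1$ this reads $(2+2k)\|B\|^2=2k+2$, which would already give Conjecture \ref{conj2A}; since we do not have that in general, the honest route is the quantitative estimate underlying Theorem \ref{thm-conj3-weak}. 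Concretely, I would prove the master inequality
\begin{equation*}
\sum_{\alpha=2}^{2k+1}\|[B,B_\alpha]\|^2\le 2k\|B\|^2+2\left\|\sum_{\alpha=2}^{2k+1}\varepsilon_\alpha (B^*B_\alpha-B_\alpha B^*)\right\|_{\text{op}}\cdot(\text{something})^{1/2}
\end{equation*}
and then Cauchy--Schwarz across the $2k$ indices converts the cross term into $2\sqrt{k}\,\|B\|^2$. More precisely: expand $\|[B,B_\alpha]\|^2=\langle T_B B_\alpha,B_\alpha\rangle$, split $T_B=L^*L$ where $L(Y)=[B,Y]$, and bound $\sum\langle T_B B_\alpha,B_\alpha\rangle$ by $\lambda_1(T_B)\cdot k + (\text{trace-deficit controlled terms})$; using $\lambda_1(T_B)\le 2$ for the $k$ ``large'' eigenvalues and a Cauchy--Schwarz estimate $\sum_{\alpha}\lambda_{\text{small}}\le k + 2\sqrt{k}$ for the rest, assembled from the refined second-eigenvalue control in the proof of Theorem \ref{thm-conj3-weak}, delivers $2k+1+2\sqrt{k}$.

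The main obstacle will be extracting the clean $2\sqrt{k}$ cross-term bound rather than a crude $2k$ from the fact that $\lambda_1(T_X)\le 2$: naively pairing $\lambda_{2i-1}(T_X)=\lambda_{2i}(T_X)\le 2$ (Proposition \ref{prop2}) gives only $4k$, so the gain must come from the fact that the $2k$ eigenvectors cannot all be ``efficient'' simultaneously against a single $B$ of unit norm — a $\sum_\alpha\|B_\alpha\|^2=2k$ constraint being spread against $\|B\|^2=1$. I expect this to be exactly where the $k=1$ refinement $\lambda_1+\lambda_3\le(4+\sqrt{10})/2$ from Theorem \ref{thm-conj3-weak} gets iterated/telescoped: apply that bound to consecutive pairs of the majorization ladder, i.e. $\lambda_{2i-1}+\lambda_{2i+1}\le (4+\sqrt{10})/2$, sum over $i$, and reconcile with the trivial $\lambda_1\le 2$; the arithmetic should collapse to $2k+1+2\sqrt{k}$ after optimizing how many pairs receive the sharp bound versus the trivial one. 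Handling the boundary terms of this telescoping and making the optimization over the split point rigorous is the one calculation I would actually have to carry out in full.
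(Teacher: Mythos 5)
Your proposal does not contain the mechanism that actually produces the $2\sqrt{k}$ term, and the one concrete strategy you commit to — telescoping the bound $\lambda_1(T_X)+\lambda_3(T_X)\leq(4+\sqrt{10})/2$ over consecutive pairs — cannot work. Pairing the odd-indexed eigenvalues and giving each pair the bound $(4+\sqrt{10})/2$ yields $\sum_{i=1}^{2k}\lambda_i(T_X)\lesssim k(4+\sqrt{10})/2\approx 3.58\,k$, which is asymptotically far weaker than $2k+1+2\sqrt{k}\sim 2k$; no choice of ``split point'' between the sharp and trivial bounds repairs a deficit that is linear in $k$. The intermediate ``master inequality'' with an unspecified $(\text{something})^{1/2}$ and the appeal to a ``weakened, unconditional form'' of Conjecture \ref{conj2C} (which, as you note, would be the conjecture itself) are placeholders, not arguments.

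The paper's proof runs through the Kronecker-product decomposition of Proposition \ref{VecTX}: $\lambda(T_X)=\lambda(K_1+K_2)$ with $K_1=I\otimes X^*X+\overline{X}X^t\otimes I$ and $K_2=2(B^t\otimes B-A^t\otimes A)$ for the Cartesian decomposition $X=A+B$. Lemma \ref{lem3} splits $\sum_{i=1}^{2k}\lambda_i(T_X)\leq\sum_{i=1}^{2k}\lambda_i(K_1)+\sum_{i=1}^{2k}\lambda_i(K_2)$; the first sum is at most $2k+1$ because $\lambda_1(K_1)=2\sigma_1^2(X)\leq2$ and $\lambda_i(K_1)\leq\sigma_1^2(X)+\sigma_2^2(X)\leq1$ for $i\geq2$. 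The $\sqrt{k}$ gain comes entirely from Lemma \ref{numbers inequality}: the positive eigenvalues of $-A^t\otimes A$ are products $-a_ia_j$ of a positive and a negative eigenvalue of $A$, they occur in pairs, and any selection of $k$ such products is bounded by $\frac{\sqrt{k}}{2}\,(2\|A\|^2)=\sqrt{k}\|A\|^2$ via Cauchy--Schwarz applied to the two halves of the spectrum of $A$ (similarly for $B$), giving $\sum_{i=1}^{2k}\lambda_i(K_2)\leq2\sqrt{k}$. Your intuition that ``the $2k$ eigenvectors cannot all be efficient simultaneously'' is the right heuristic, but without identifying the product structure of $\lambda(-A^t\otimes A)$ and $\lambda(B^t\otimes B)$ there is no route from that heuristic to the stated constant, so the proof as proposed is incomplete.
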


It turns out that the methods we developed in the study of the conjectures above lead us to some new simple proofs of the complex BW inequality and the condition for equality, which we will discuss first in Section \ref{sect-BW} as it is just the first eigenvalue estimate $\lambda_1(T_X)\leq2$, the basic case $k=1$ of the complex LW Conjecture \ref{conj2D}. In Section \ref{sect-pre} we prepare several useful lemmas and properties of $T_X$. In Section \ref{sect-eq} we prove the equivalence between Conjectures \ref{conj2A}-\ref{conj2C} and Conjecture \ref{conj2}, i.e., Theorem \ref{equiv conj} in the complex version. In Section \ref{sect-LW} we prove the conjectures for the special cases of Theorem \ref{thm-special-cases} and for general cases, we show the partial results Theorems \ref{thm-conj3-weak} and \ref{thm-conj7-weak}.

Although the inequalities we study in this paper are matrix inequalities, it is not hard to generalize
them as inequalities of bounded operators on separable Hilbert spaces. In quantum physics, these inequalities are related to the \emph{Uncertainty Principle}, or more precisely, the Robertson-Schr\"odinger relations. The classical Uncertainty Principle, in our notations, can be formulated by
\begin{equation*}
\|[A,B]|^2_{OP}\leq 2\,\|A\|^2_{OP}\cdot\|B\|^2_{OP},
\end{equation*}
where $\|\cdot\|_{OP}$ is the operator norm. In this context, the BW-type inequality can be viewed as another version of the Uncertainty Principle. There are literature in physics provides various of generalization of the Uncertainty Principle; see ~\cite{Tri} for example. In our paper, we study the optimal version of all these inequalities.

\section{Preliminaries}\label{sect-pre}
In this section, we will introduce some necessary notations and lemmas which are interesting in themselves. To avoid needless duplication, we discuss the complex version directly so as to include the real version.

Let $T$ be a linear mapping on a complex $N$-dimensional vector space $V$ with Hermitian inner product $\langle\cdot,\cdot\rangle$. In this paper, we always denote by $$\lambda(T):=\{\lambda_1(T)\geq\cdots \geq\lambda_N(T)\}, \quad \sigma(T):=\{\sigma_1(T)\geq\cdots\geq \sigma_N(T)\geq0\}$$ the ordered sets of real eigenvalues (if available) and singular values of $T$ respectively, where singular values are square roots of eigenvalues of $T^*T$.

Now suppose $T\geq0$ be self-dual and positive semi-definite. Then by elementary linear algebra, we have
\begin{lem}\label{lem0}
The multiplicity of each positive eigenvalue of $T$ is even if and only if there exists a unitary skew-symmetric mapping $S$ (i.e., $U^*SU$ is real skew-symmetric for some unitary matrix $U$) such that $T=S^*S=-S^2$. In addition, $Tx=0$ if and only if $Sx=0$.
\end{lem}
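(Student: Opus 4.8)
The plan is to prove the two directions separately, exploiting the spectral decomposition of the self-dual positive semi-definite operator $T$. First I would handle the easy direction: if $T=S^*S=-S^2$ with $S$ unitarily equivalent to a real skew-symmetric matrix, then on a nonzero eigenspace $V_\mu$ of $T$ with eigenvalue $\mu>0$ one has $S^2 x=-\mu x$, so $S$ restricts to an invertible operator on $V_\mu$ with $S^2=-\mu\,\mathrm{id}$; thus $\tfrac{1}{\sqrt\mu}S$ is a complex structure on $V_\mu$, forcing $\dim_{\mathbb C}V_\mu$ to be even. The kernel statement is immediate: $Sx=0\Rightarrow Tx=S^*Sx=0$, and conversely $Tx=0\Rightarrow \langle Sx,Sx\rangle=\langle S^*Sx,x\rangle=\langle Tx,x\rangle=0$, so $Sx=0$.

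For the converse, suppose every positive eigenvalue of $T$ has even multiplicity. Decompose $V=\ker T\oplus\bigoplus_j V_{\mu_j}$ orthogonally, with $\mu_j>0$ distinct and $\dim_{\mathbb C}V_{\mu_j}=2r_j$. On each $V_{\mu_j}$ I would choose a skew-symmetric unitary model: take a real skew-symmetric matrix $J_j$ of size $2r_j$ with $J_j^2=-\mathrm{id}$ (the standard block-diagonal $\bigl(\begin{smallmatrix}0&1\\-1&0\end{smallmatrix}\bigr)$ form), and set $S|_{V_{\mu_j}}=\sqrt{\mu_j}\,J_j$ after identifying $V_{\mu_j}$ with $\mathbb C^{2r_j}$ via an orthonormal basis. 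On $\ker T$ set $S=0$. Then $S^*S=-S^2$ equals $\mu_j\,\mathrm{id}$ on $V_{\mu_j}$ and $0$ on $\ker T$, i.e. $S^*S=-S^2=T$. That $S$ is unitarily skew-symmetric follows since in the chosen orthonormal basis adapted to the decomposition $S$ is block-diagonal with blocks $\sqrt{\mu_j}J_j$ and a zero block, which is a real skew-symmetric matrix; the change-of-basis unitary $U$ carrying the standard basis to this adapted basis is exactly the $U$ in the statement.

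The only genuine subtlety — and the step I expect to require the most care — is the compatibility of ``$S$ unitary'' (meaning $S^*S=\mathrm{id}$, or at least $S$ a partial isometry on $(\ker T)^\perp$) with the scaling $\sqrt{\mu_j}J_j$: strictly, $S$ as built above is not norm-preserving unless all $\mu_j=1$. I would read the lemma's ``unitary skew-symmetric mapping'' in the sense the authors intend, namely that $S$ is \emph{unitarily equivalent to a real skew-symmetric matrix} (the parenthetical clarification ``$U^*SU$ is real skew-symmetric'' is the operative definition, with no norm-one requirement on $S$ itself). Under that reading the construction above is complete; if instead literal unitarity of $S$ is demanded, one restricts $S$ to $(\ker T)^\perp$ and absorbs the scalars, noting $T$ and $S$ then no longer satisfy $T=S^*S$ on the nose but $T=S^*\Lambda S$ for a positive diagonal $\Lambda$ commuting with $S$ — so the clean statement $T=-S^2$ is what pins down the intended meaning, and I would present the proof accordingly, remarking on this point. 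The remaining verifications (orthogonality of the eigenspace decomposition, that the adapted basis is orthonormal, that block-diagonal real skew-symmetric blocks assemble to a real skew-symmetric matrix) are routine linear algebra and I would state them without detailed computation.
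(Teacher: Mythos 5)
Your construction for the hard direction is essentially identical to the paper's: diagonalize $T$ unitarily, and on each positive eigenspace $V_{\mu_j}$ define $S$ as $\sqrt{\mu_j}$ times a standard real skew-symmetric complex structure, zero on $\ker T$ (the paper uses the block form $\bigl(\begin{smallmatrix}O & -s_jI_{n_j}\\ s_jI_{n_j} & O\end{smallmatrix}\bigr)$, you use $\bigoplus\bigl(\begin{smallmatrix}0&1\\-1&0\end{smallmatrix}\bigr)$, which are equivalent up to reordering the basis, as the paper itself notes later). Your reading of ``unitary skew-symmetric'' as ``unitarily equivalent to a real skew-symmetric matrix'' is the one the parenthetical in the statement prescribes, so that discussion, while sensible, is not a real obstacle. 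One small point in your easy direction: on a \emph{complex} vector space, a $\mathbb{C}$-linear map $J$ with $J^2=-\mathrm{id}$ does not by itself force even complex dimension (e.g.\ $J=\oi\,\mathrm{id}$ on $\mathbb{C}^1$); what you need is that $S$ is unitarily equivalent to a \emph{real} skew-symmetric matrix, so its eigenvalues $\pm\oi\sqrt{\mu}$ occur in conjugate pairs with equal multiplicities, and $V_\mu$ is exactly the sum of those two eigenspaces of $S$. With that fix the argument is complete and matches the paper (which simply declares the sufficiency clear).
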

\begin{proof}
The sufficiency is clear. Now suppose that there are $g$ distinct positive eigenvalues $\lambda(T)=\{t_1=s_1^2> \cdots> t_g=s_g^2>0\}$  with multiplicities $2n_1, \cdots , 2n_g$, and denote by $r=2\sum_{j=1}^gn_j$ the rank of $T$. Then we can diagonalize $T$ by a unitary matrix $U$ as
$$T=U\diag\Big(t_1I_{2n_1},\cdots,t_gI_{2n_g},O_{N-r}\Big)U^*,$$
where $O_{N-r}$ denotes the zero matrix of order $N-r$.
Then the required unitary skew-symmetric matrix can be defined as  $$S:=U
\begin{pmatrix}\begin{array}{cc}
O & -s_1I_{n_1}\\
s_1I_{n_1}& O
\end{array}& & & \\
&\ddots&&\\&& \begin{array}{cc}
O & -s_gI_{n_g}\\
s_gI_{n_g}& O
\end{array}&\\ &&&O_{N-r}
\end{pmatrix}
U^*.$$

The proof is complete.
\end{proof}

Now let $T$ be self-dual and positive semi-definite with even multiplicities of positive eigenvalues (i.e., $\lambda_{2i-1}(T)=\lambda_{2i}(T)$ for any $i$ with $\lambda_{2i-1}(T)>0$), and $S$ be the unitary skew-symmetric mapping as in Lemma \ref{lem0}. Then we have the following lemmas.

\begin{lem}\label{lemma 1}
Let $y\in V$ with $|y|=1$. Then
\begin{equation*}\label{ineq of lemma 1}
\Big\langle T\frac{Sy}{|Sy|}, \frac{Sy}{|Sy|}\Big\rangle\geq \langle Ty, y \rangle.
\end{equation*}
\end{lem}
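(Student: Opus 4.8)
The plan is to diagonalize $T$ and work explicitly with the block structure provided by Lemma \ref{lem0}. Write $T = S^*S = -S^2$ with $S$ unitary skew-symmetric, so $S^*S = SS^* = I$ and $S^* = -S$. In the unitary basis from the proof of Lemma \ref{lem0}, $V$ splits into two-dimensional $S$-invariant blocks (on which $S$ acts as $\begin{pmatrix}0&-s_j\\ s_j&0\end{pmatrix}$, giving eigenvalue $t_j = s_j^2$ of $T$) plus the kernel. The key observation is that $S$ commutes with $T$: indeed $ST = S(-S^2) = -S^3 = (-S^2)S = TS$. Consequently $S$ preserves each eigenspace of $T$, and hence, restricted to the eigenspace for eigenvalue $t_j>0$, we have $S^*S = \mathrm{Id}$ so $|Sy|^2 = \langle S^*Sy,y\rangle = |y|^2$; more importantly, on that eigenspace $\langle Ty,y\rangle = t_j|y|^2 = \langle T(Sy),Sy\rangle$ as well.

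The actual proof then proceeds as follows. First I would note that since $\langle Ty,y\rangle = \langle Sy,Sy\rangle = |Sy|^2$ (using $T=S^*S$), the claimed inequality is equivalent to
\begin{equation*}
\frac{\langle T(Sy),Sy\rangle}{|Sy|^2}\ \geq\ |Sy|^2\ =\ \langle Ty,y\rangle,
\end{equation*}
i.e. we must show $\langle T(Sy),Sy\rangle \geq |Sy|^4 = \langle Ty,y\rangle^2$. Now decompose $y = \sum_j y_j$ into its components in the eigenspaces $E_j$ of $T$ (eigenvalue $t_j\geq 0$, including $t=0$), so that $|y|^2 = \sum_j |y_j|^2 = 1$ and $\langle Ty,y\rangle = \sum_j t_j |y_j|^2$. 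Since $S$ preserves each $E_j$ and is unitary, $Sy = \sum_j Sy_j$ is again the eigenspace decomposition of $Sy$ with $|Sy_j| = |y_j|$. Therefore $\langle T(Sy),Sy\rangle = \sum_j t_j |Sy_j|^2 = \sum_j t_j |y_j|^2 = \langle Ty,y\rangle$. So the left-hand side numerator equals $\langle Ty,y\rangle$, and the inequality reduces to $\langle Ty,y\rangle \geq \langle Ty,y\rangle^2$, i.e. $\langle Ty,y\rangle \leq 1$ — provided $|Sy|\neq 0$.

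Thus the real content is the bound $\langle Ty,y\rangle \leq 1$ for unit $y$, together with handling the degenerate case $Sy = 0$. For the bound: $\langle Ty,y\rangle = \sum_j t_j|y_j|^2$ with $\sum_j|y_j|^2=1$, so this is at most $\max_j t_j = \lambda_1(T) = \|S\|^2_{op}$... but wait, that is not automatically $\leq 1$ for an arbitrary self-dual $T$. \emph{This is the step I expect to be the main obstacle}, and it signals that the lemma as I have paraphrased it must be applied in a context where $T$ is normalized so that $\lambda_1(T)\le 1$ — or, more likely, the intended reading is simply that $\langle T(Sy/|Sy|),Sy/|Sy|\rangle = \langle Ty,y\rangle$ is an \emph{equality} whenever $y$ lies in a single eigenspace, and the inequality $\geq$ captures the general mixed case via a convexity/rearrangement argument: concretely, $\langle T(Sy),Sy\rangle/|Sy|^2 = \big(\sum_j t_j|y_j|^2\big)\big/\big(\sum_j|y_j|^2\big) \ge$ ... here one uses that dividing by $|Sy|^2 = \sum_{j:t_j>0}|y_j|^2 \le 1$ only \emph{increases} the quotient relative to $\sum_j t_j|y_j|^2 = \langle Ty,y\rangle$, because $Sy$ has \emph{no} component in $\ker T$. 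That is the crux: $S(\ker T) = \ker T$ forces $Sy \perp (\text{the }t_j>0\text{ eigenspaces})^\perp$ to lose exactly the kernel part, so $|Sy|^2 \le \sum_{j:t_j>0}|y_j|^2 \le 1 = |y|^2$ while the weighted sum $\sum t_j|y_j|^2$ is unchanged; dividing a fixed nonnegative number by something $\le 1$ can only make it larger. Finally, if $Sy=0$ then by Lemma \ref{lem0} $Ty=0$, so $\langle Ty,y\rangle = 0$ and the inequality is trivial (or the normalized vector is taken by convention); I would dispatch this case at the start.
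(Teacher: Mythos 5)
Your reduction in the second paragraph is the right first move: since $T=S^*S$, one has $|Sy|^2=\langle Ty,y\rangle$, so the lemma is equivalent to $\langle T(Sy),Sy\rangle\geq\langle Ty,y\rangle^2$. But the argument then rests on a false premise: you assert $S^*S=SS^*=I$ and hence $|Sy_j|=|y_j|$ on each eigenspace. The $S$ of Lemma~\ref{lem0} is not unitary in that sense --- ``unitary skew-symmetric'' there only means $U^*SU$ is real skew-symmetric for some unitary $U$; by construction $S^*S=T$, so on the eigenspace $E_j$ one has $|Sy_j|^2=t_j|y_j|^2$, not $|y_j|^2$. Consequently your evaluation $\langle T(Sy),Sy\rangle=\sum_j t_j|y_j|^2=\langle Ty,y\rangle$ is wrong; the correct value is $\langle T(Sy),Sy\rangle=\langle S^*TSy,y\rangle=\langle T^2y,y\rangle=\sum_j t_j^2|y_j|^2$ (using $S^*=-S$ and $T=-S^2$). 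The same error propagates into the final ``crux'' paragraph, where $|Sy|^2$ is claimed to equal $\sum_{j:t_j>0}|y_j|^2$; it actually equals $\sum_j t_j|y_j|^2$. What you prove there is a (true) inequality about a quantity that is not the left-hand side of the lemma, so the lemma is not established. You also correctly sensed the problem when you found yourself needing $\langle Ty,y\rangle\le 1$, which indeed fails for general $T$.

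The repair is short and is exactly the paper's proof: with the correct identity $\langle T(Sy),Sy\rangle=\langle T^2y,y\rangle$, your own reduction becomes $\langle T^2y,y\rangle\geq\langle Ty,y\rangle^2$, which for $|y|=1$ is just Cauchy--Schwarz in an eigenbasis of $T$: $\left(\sum_i|y_i|^2\lambda_i^2\right)\left(\sum_i|y_i|^2\right)\geq\left(\sum_i|y_i|^2\lambda_i\right)^2$. (The degenerate case $Sy=0$, which you dispatch via Lemma~\ref{lem0}, is fine.)
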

\begin{proof}
Since $T=S^*S=-S^2$, the inequality above is equivalent to
$$\<T^2y, y\>\geq\<Ty, y\>^2.$$
Let $\{e_i\}_{i=1}^{N}$ be an orthonormal basis of $V$
such that $e_i$ is a unit eigenvector corresponding to $\lambda_i(T)$.
Setting $y=\sum_{i=1}^{N}y_ie_i$, then $\sum_{i=1}^{N}y_i^2=1$ and we have
$$\begin{aligned}
\<T^2y, y\> & =\sum_{i=1}^{N}y_i^2\lambda_i^2(T)
=\(\sum_{i=1}^{N}y_i^2\lambda_i^2(T)\)\(\sum_{i=1}^{N}y_i^2\)\\
&\geq \(\sum_{i=1}^{N}y_i^2\lambda_i(T)\)^2=\<Ty, y\>^2.
\end{aligned}$$
The proof is complete.
\end{proof}

\begin{lem}\label{lemma 2}
Let $W\subseteq V$ be a complex $m$-dimensional isotropic subspace of $S$, i.e.,
$S(W)\subset W^\bot$ $($ $\langle S w_1, w_2\rangle=0$ for any $w_1,w_2\in W$ $)$.
Then we have $$\Tr T|_W\leq \Tr T|_{S(W)},\quad \Tr T|_W\leq\sum_{i=1}^m\lambda_{2i-1}(T).$$
\end{lem}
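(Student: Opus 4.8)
The plan is to establish the two inequalities in turn, with the first one being the core of the argument. For the first inequality $\Tr T|_W \leq \Tr T|_{S(W)}$, I would pick an orthonormal basis $w_1,\dots,w_m$ of $W$ and write $\Tr T|_W = \sum_{j=1}^m \langle T w_j, w_j\rangle$. Since $T = -S^2 = S^*S$, each term equals $\langle S^* S w_j, w_j\rangle = |S w_j|^2$. Now the isotropy hypothesis $S(W)\subset W^\perp$ means the vectors $S w_1,\dots,S w_m$ are orthogonal to $W$; because $S$ is unitary they are themselves an orthonormal set (as $\langle S w_i, S w_j\rangle = \langle w_i, w_j\rangle = \delta_{ij}$), hence an orthonormal basis of $S(W)$. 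The key point is then to relate $\Tr T|_W$ to $\Tr T|_{S(W)}$: using Lemma \ref{lemma 1} applied with $y = w_j$ (note $|S w_j| = 1$), we get $\langle T \, S w_j, S w_j\rangle \geq \langle T w_j, w_j\rangle$ for each $j$. Summing over $j$ and observing that $\{S w_j\}$ is an orthonormal basis of $S(W)$ yields $\Tr T|_{S(W)} = \sum_j \langle T S w_j, S w_j\rangle \geq \sum_j \langle T w_j, w_j\rangle = \Tr T|_W$.

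For the second inequality $\Tr T|_W \leq \sum_{i=1}^m \lambda_{2i-1}(T)$, I would combine the first inequality with a dimension-counting / majorization argument. Since $W$ and $S(W)$ are orthogonal (isotropy) subspaces each of dimension $m$, the direct sum $W \oplus S(W)$ is a $2m$-dimensional subspace, and from the first inequality
$$
2\,\Tr T|_W \leq \Tr T|_W + \Tr T|_{S(W)} = \Tr T|_{W\oplus S(W)} \leq \sum_{i=1}^{2m} \lambda_i(T),
$$
where the last step is the standard Ky Fan variational principle: the trace of a positive self-dual operator restricted to any $2m$-dimensional subspace is at most the sum of its $2m$ largest eigenvalues. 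It remains to note that $\sum_{i=1}^{2m}\lambda_i(T) = 2\sum_{i=1}^m \lambda_{2i-1}(T)$, which is exactly the even-multiplicity hypothesis $\lambda_{2i-1}(T) = \lambda_{2i}(T)$ guaranteed by Lemma \ref{lem0}. Dividing by $2$ gives the claim.

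The main obstacle I anticipate is the first inequality, and specifically making sure Lemma \ref{lemma 1} applies cleanly term by term: one must check that $S w_j \neq 0$ so that normalization is legitimate, but this is immediate since $|S w_j| = |w_j| = 1$ by unitarity of $S$ (and if instead one only knew $S$ were skew-symmetric without being unitary, this would need more care — here it is automatic). The rest is bookkeeping: confirming $\{S w_j\}$ is orthonormal, identifying it as a basis of $S(W)$, and invoking the Ky Fan principle, all of which are routine once the setup is in place.
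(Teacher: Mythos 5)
Your overall strategy is the paper's (apply Lemma \ref{lemma 1} termwise to get $\Tr T|_W\leq \Tr T|_{S(W)}$, then bound $\Tr T|_{W\oplus S(W)}$ by the $2m$ largest eigenvalues and halve using even multiplicities), and the second half of your argument is sound. But the first half rests on a false premise: $S$ is \emph{not} an isometry. In this paper ``unitary skew-symmetric'' means only that $U^*SU$ is real skew-symmetric for some unitary $U$; the explicit construction in Lemma \ref{lem0} gives $S^*S=T$, which is not the identity (and $S$ has a nontrivial kernel whenever $T$ does). Hence $\langle Sw_i,Sw_j\rangle=\langle Tw_i,w_j\rangle$, so the vectors $Sw_1,\dots,Sw_m$ need not be orthonormal: they can fail to be mutually orthogonal, their norms need not equal $1$, and some can vanish. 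Concretely, take $\lambda(T)=\{4,4,1,1\}$ with $Se_1=2e_2$, $Se_2=-2e_1$, $Se_3=e_4$, $Se_4=-e_3$; the subspace $W=\Span_{\mathbb{C}}\{e_1,e_3\}$ is isotropic, and for the orthonormal basis $w_{1,2}=(e_1\pm e_3)/\sqrt2$ one finds $\langle Sw_1,Sw_2\rangle=3/2\neq0$ and $|Sw_1|^2=5/2$. This breaks both your identification $\Tr T|_{S(W)}=\sum_j\langle TSw_j,Sw_j\rangle$ (in the example that sum equals $17$, while $\Tr T|_{S(W)}=5$) and the termwise use of Lemma \ref{lemma 1}, which compares $\langle Ty,y\rangle$ with $T$ at the \emph{normalized} vector $Sy/|Sy|$, so the normalization factors do not cancel when you sum over a non-orthogonal family.

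The missing idea is precisely what the paper supplies: before invoking Lemma \ref{lemma 1} one must choose an orthonormal basis of $W$ adapted to $S$. Applying the singular value decomposition to $(SE_1,\dots,SE_m)$ yields an orthonormal basis $\{\widetilde E_i\}_{i=1}^m$ of $W$ whose images $S\widetilde E_i$ are mutually orthogonal, with exactly $k=\dim S(W)\leq m$ of them nonzero; normalizing those gives an orthonormal basis $\{\widetilde F_i\}_{i=1}^k$ of $S(W)$, and only then does summing Lemma \ref{lemma 1} yield $\Tr T|_W\leq\Tr T|_{S(W)}$. With that repaired, your Ky Fan step goes through essentially verbatim, with $\dim\(W\oplus S(W)\)=m+k\leq 2m$ in place of $2m$.
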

\begin{proof}
We will find a suitable basis to compare the traces by using Lemma \ref{lemma 1}.
Let $\{E_i\}_{i=1}^{N}$ be an orthonormal basis of $V$ such that $\{E_i\}_{i=1}^m$ is a basis of $W$, and under this basis we identify $V\cong \mathbb{C}^N$. Denote
$$\rank(SE_1, \cdots, SE_m)=\dim S(W)=: k\leq m.$$
Assume $k\geq 1$, otherwise we have $S|_W=0$ and thus $\Tr T|_W=0$ by Lemma \ref{lem0}.
By singular value decomposition, there exist $P\in U(N)$ and $Q\in U(m)$ such that
$$P^*(SE_1, \cdots, SE_m)Q=\Lambda=:
\begin{pmatrix}
    \widetilde{\Lambda}_{k\times k} & O\\
     O & O
\end{pmatrix}_{N\times m},$$
where $\widetilde{\Lambda}=:\diag(\Lambda_1, \cdots, \Lambda_k)$, $\Lambda_i>0$ for $1\leq i\leq k$.
Setting $$P\Lambda=:(F_1, \cdots, F_m),$$ we have
$\<F_i, F_j\>=\Lambda_i\Lambda_j\delta_{ij}$ for $1\leq i, j\leq k$ and
$F_i=0$ for $i>k$.
Thus $\{\widetilde{F}_i\}_{i=1}^{k}$ is an orthonormal basis of $S(W)$,
where $\widetilde{F}_i:=\Lambda_i^{-1}F_i$.
Let $$(\widetilde{E}_1, \cdots, \widetilde{E}_m):=(E_1, \cdots, E_m)Q,$$ then
$\{\widetilde{E}_i\}_{i=1}^{m}$ is an orthonormal basis of $W$ and satisfies
$$(F_1, \cdots, F_m)=P\Lambda=(SE_1, \cdots, SE_m)Q=(S\widetilde{E}_1, \cdots, S\widetilde{E}_m).$$
Therefore, Lemma \ref{lemma 1} implies
$$\Tr T|_W=\sum_{i=1}^{m}\<T\widetilde{E}_i, \widetilde{E}_i\>
\leq\sum_{i=1}^{k}\<T\widetilde{F}_i, \widetilde{F}_i\>=\Tr T|_{S(W)}.$$

Since $S(W)\subset W^\bot$, $\{\widetilde{E}_i\}_{i=1}^{m}\bigcup\{\widetilde{F}_i\}_{i=1}^{k}$
is an orthonormal basis of $W\oplus S(W)$.
Hence,
$$\Tr T|_W+\Tr T|_{S(W)}=\Tr T|_{W\oplus S(W)}
\leq\sum_{i=1}^{m+k}\lambda_i(T)\leq\sum_{i=1}^{2m}\lambda_i(T),$$
$$\Tr T|_W\leq\frac12\sum_{i=1}^{2m}\lambda_i(T)=\sum_{i=1}^m\lambda_{2i-1}(T).$$
The proof is complete.
\end{proof}

Now we consider the linear operator $T_X$ as in Conjecture \ref{conj3}. More specifically, for any $n\times n$ complex matrix $X$ with $\|X\|=1$, we define
\begin{equation}\label{def-TX}
\begin{aligned}
T_X:  M(n,\mathbb{C}) &\longrightarrow  M(n,\mathbb{C}),\\
 Y &\longmapsto  [X^*,[X,Y]].
\end{aligned}
\end{equation}
 It turns out that $T_X$ is exactly an operator of the same type as $T$ in the preceding lemmas with $V=M(n,\mathbb{C})$, $\dim V=n^2=:N$  (cf. \cite{Lu11}). For the sake of completeness, we repeat the properties as follows.

\begin{prop}\label{prop1}\cite{Lu11}
$T_X$ is an self-dual and positive semi-definite linear map.
\end{prop}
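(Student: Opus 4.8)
The plan is to verify directly that $T_X$ is self-dual (Hermitian) and positive semi-definite with respect to the Frobenius inner product $\langle A, B\rangle = \Re\Tr(AB^*)$ on $M(n,\mathbb{C})$, by factoring $T_X$ through the single commutator operator. First I would introduce the auxiliary linear map $\mathrm{ad}_X : M(n,\mathbb{C}) \to M(n,\mathbb{C})$, $Y \mapsto [X,Y] = XY - YX$, so that $T_X(Y) = [X^*,[X,Y]]$ can be rewritten. The key observation is that the Frobenius-adjoint of $\mathrm{ad}_X$ is $\mathrm{ad}_{X^*}$: indeed, for any $Y, Z \in M(n,\mathbb{C})$,
\begin{equation*}
\langle \mathrm{ad}_X(Y), Z\rangle = \Tr\big((XY-YX)Z^*\big) = \Tr\big(Y(Z^*X - XZ^*)\big) = \Tr\big(Y(X^*Z - ZX^*)^*\big) = \langle Y, \mathrm{ad}_{X^*}(Z)\rangle,
\end{equation*}
using cyclicity of the trace and $(X^*Z-ZX^*)^* = Z^*X - XZ^*$. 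Hence $(\mathrm{ad}_X)^* = \mathrm{ad}_{X^*}$.

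Next I would observe that $T_X = \mathrm{ad}_{X^*} \circ \mathrm{ad}_X$. From the adjoint identity just established, $(\mathrm{ad}_{X^*})^* = \mathrm{ad}_{X} = \mathrm{ad}_{(X^*)^*}$, so $T_X^* = (\mathrm{ad}_{X^*}\circ \mathrm{ad}_X)^* = (\mathrm{ad}_X)^* \circ (\mathrm{ad}_{X^*})^* = \mathrm{ad}_{X^*}\circ \mathrm{ad}_X = T_X$, giving self-duality. Positive semi-definiteness is then immediate: for any $Y$,
\begin{equation*}
\langle T_X(Y), Y\rangle = \langle \mathrm{ad}_{X^*}(\mathrm{ad}_X(Y)), Y\rangle = \langle \mathrm{ad}_X(Y), \mathrm{ad}_X(Y)\rangle = \|[X,Y]\|^2 \geq 0.
\end{equation*}
This also records the useful formula $\langle T_X Y, Y\rangle = \|[X,Y]\|^2$, which is exactly what connects $T_X$ to the BW-type estimates and to Question \ref{Lu-Wenzel ques}.

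I expect there to be essentially no serious obstacle here — the statement is a soft structural fact. The only point requiring a little care is the choice of real versus complex (Hermitian) inner product: since we want $T_X$ to be \emph{self-dual} in the sense used for the real case as well, one works with the real inner product $\Re\Tr(AB^*)$ on $M(n,\mathbb{C})$ viewed as a real vector space, or equally with the Hermitian form $\Tr(AB^*)$; in either case the adjoint computation goes through verbatim because $\Tr$ is used cyclically and conjugation behaves correctly, so $T_X$ is genuinely Hermitian and $PSD$. For the real version ($X \in M(n,\mathbb{R})$), $X^* = X^{\mathsf T}$ and everything specializes directly. Finally, I would note in passing that writing $T_X = \mathrm{ad}_{X^*}\circ \mathrm{ad}_X$ makes transparent that the nonzero spectrum of $T_X$ coincides with the squares of the singular values of $\mathrm{ad}_X$, which is the entry point for the eigenvalue analysis (Proposition \ref{prop2} and Lemma \ref{lem0}) developed in the rest of the section.
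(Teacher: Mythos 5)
Your proposal is correct and is essentially the paper's own argument: the paper verifies directly that $\langle Y_1,[X^*,[X,Y_2]]\rangle=\langle[X,Y_1],[X,Y_2]\rangle=\langle[X^*,[X,Y_1]],Y_2\rangle$ and $\langle T_XY,Y\rangle=\|[X,Y]\|^2$, which is exactly your factorization $T_X=\mathrm{ad}_{X^*}\circ\mathrm{ad}_X$ with $(\mathrm{ad}_X)^*=\mathrm{ad}_{X^*}$ written without the auxiliary notation. No gaps; your extra remarks on the real versus Hermitian inner product and on the singular values of $\mathrm{ad}_X$ are consistent with how the paper uses this fact later.
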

\begin{proof}
This is because of the following straightforward computations:
$$
\langle Y_1,[X^*,[X,Y_2]]\rangle=\langle[X,Y_1],[X,Y_2]\rangle
=\langle [X^*,[X,Y_1]],Y_2\rangle
$$
and $$\langle T_X Y, Y\rangle=\|[X,Y]\|^2.$$
\end{proof}

It follows immediately from the definition (\ref{def-TX}) that
\begin{equation}\label{unitary}
T_{U^*XU}(U^*YU)=U^*(T_XY)U, \quad \textit{for } U\in U(n),
\end{equation}
thus we have
\begin{lem}\label{lem-inv}
The set of eigenvalues $\lambda(T_X):=\{\lambda_1(T_X)\geq\cdots \geq\lambda_N(T_X)\}$ is invariant under unitary congruences of $X$.
\end{lem}

\begin{prop}\label{prop2}\cite{Lu11}
The multiplicity of each positive eigenvalue of $T_X$ is even, i.e., $\lambda_{2i-1}(T_X)=\lambda_{2i}(T_X)$ for any $i$ with $\lambda_{2i-1}(T_X)>0$.
\end{prop}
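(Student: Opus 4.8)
The plan is to construct a \emph{conjugate-linear} operator $J_0$ on $M(n,\mathbb{C})$ whose square equals $-T_X$; restricted to any positive eigenspace of $T_X$ it will then be a (scalar multiple of a) quaternionic structure, and the elementary fact that a quaternionic structure forces even complex dimension will give the claim.

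Concretely, I would set $J_0\colon M(n,\mathbb{C})\to M(n,\mathbb{C})$, $J_0(Y):=[X^{*},Y^{*}]$, which is conjugate-linear since $Y\mapsto Y^{*}$ is. Using the commutator identity $([X,Y])^{*}=-[X^{*},Y^{*}]$ one checks directly that $J_0^{2}=-T_X$: indeed $J_0(J_0Y)=[X^{*},([X^{*},Y^{*}])^{*}]=[X^{*},-[X,Y]]=-T_XY$. Since $T_X$ is self-dual and positive semi-definite (Proposition \ref{prop1}), all its eigenvalues are real and $\geq 0$; and because $J_0$ commutes with $T_X=-J_0^{2}$, it preserves each eigenspace $E_\mu:=\ker(T_X-\mu\,\mathrm{id})$ (for $Y\in E_\mu$ one has $T_X(J_0Y)=J_0(T_XY)=\overline{\mu}\,J_0Y=\mu\,J_0Y$, $\mu$ being real). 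On $E_\mu$ we thus have $J_0^{2}=-\mu\,\mathrm{id}$, so for $\mu>0$ the map $J:=\mu^{-1/2}J_0|_{E_\mu}$ is conjugate-linear with $J^{2}=-\mathrm{id}$ and $Ji=-iJ$, i.e.\ a quaternionic structure on $E_\mu$; hence $\dim_{\mathbb{C}}E_\mu$ is even, which is exactly $\lambda_{2i-1}(T_X)=\lambda_{2i}(T_X)$ whenever this value is positive. If one prefers an argument in the style of Lemma \ref{lem0}, cyclicity of the trace gives $\langle J_0Y,Y\rangle=\Tr([X^{*},Y^{*}]Y^{*})=0$ for all $Y$, and on $E_\mu$ one has $\langle J_0Y_1,J_0Y_2\rangle=\langle Y_1^{*},T_{X^{*}}(Y_2^{*})\rangle=\mu\,\overline{\langle Y_1,Y_2\rangle}$ (using that the adjoint of $Y\mapsto[X^{*},Y]$ is $Y\mapsto[X,Y]$ and that $Y^{*}\in E_\mu(T_{X^{*}})$), so $J$ is anti-unitary on $E_\mu$; then for a unit vector $Y\in E_\mu$ the pair $\{Y,JY\}$ is orthonormal and spans a $J$-invariant plane whose orthogonal complement inside $E_\mu$ is again $J$-invariant, and induction on $\dim E_\mu$ finishes the count.

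The only step that requires an idea is guessing the map $J_0$ and verifying the commutator identity $([X^{*},Y^{*}])^{*}=-[X,Y]$ that produces $J_0^{2}=-T_X$; after that, invariance of the eigenspaces and the statement that a quaternionic structure (equivalently an anti-unitary $J$ with $J^{2}=-\mathrm{id}$) forces even complex dimension are routine. I would also note that the very same construction, with $*$ replaced by transpose, handles the real case simultaneously: there $J_0$ is genuinely $\mathbb{R}$-linear and $\mu^{-1/2}J_0|_{E_\mu}$ is an ordinary complex structure on the real vector space $E_\mu$, so a single argument covers both the real and the complex statement, as expected from working in the complex version directly.
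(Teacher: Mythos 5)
Your proof is correct and is essentially the paper's: your $J_0(Y)=[X^*,Y^*]$ is exactly $-\widetilde{S}_X(Y)=-[X,Y]^*$, the conjugate-linear map with square $-T_X$ that the paper constructs, and the verification $J_0^2=-T_X$ is the same computation. The only difference is in the finish --- you conclude evenness of $\dim_{\mathbb{C}}E_\mu$ by citing the quaternionic-structure ($\mathbb{H}$-module) fact, whereas the paper runs the explicit orthonormal-pairing induction that you also sketch as your second route.
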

\begin{proof}
Let $\lambda>0$ be a positive eigenvalue of $T_X$ and $E_\lambda$ be its eigenspace. We will show that the complex dimension of $E_\lambda$ is even.

Define a quasi-linear map by
\begin{equation*}
\begin{aligned}
\widetilde{S}_X:  M(n,\mathbb{C}) &\longrightarrow  M(n,\mathbb{C}),\\
 Y &\longmapsto  [X,Y]^*.
\end{aligned}
\end{equation*}
Then it follows easily that $\widetilde{S}_X (zY)=\bar{z}\widetilde{S}_X (Y)$ for $z\in\mathbb{C}$, $\widetilde{S}_X$ is anti-self-dual and $T_X=-\widetilde{S}_X^2$ because
$$\langle \widetilde{S}_XY_1, Y_2\rangle=\Re \Tr [X,Y_1]Y_2=\Re \Tr X[Y_1,Y_2]=-\langle Y_1, \widetilde{S}_X Y_2\rangle,$$
$$-\widetilde{S}_X^2 Y=-[X,[X,Y]^*]^*=[X^*,[X,Y]]=T_X Y.$$

Now for any eigenvector $Y\in E_\lambda$, i.e., $T_X Y=\lambda Y$, we claim that $\widetilde{S}_X Y$ is also an eigenvector in $E_\lambda$ which is $\mathbb{C}$-independent (even $\mathbb{C}$-orthogonal) to $Y$. In fact, since $T_X=-\widetilde{S}_X^2$ we have
$$T_X\widetilde{S}_X Y=\widetilde{S}_XT_X Y=\lambda \widetilde{S}_X Y, \quad \|\widetilde{S}_X Y\|^2=\langle T_X Y, Y\rangle=\lambda \|Y\|^2>0,$$
$$\Tr \Big(Y(\widetilde{S}_X Y)^*\Big)=\Tr \Big(Y[X,Y]\Big)=0, \quad \textit{and thus} \quad \langle Y,  \widetilde{S}_X Y\rangle= \langle \oi Y,  \widetilde{S}_X Y\rangle=0,$$
where $\oi=\sqrt{-1}$ here and for the rest of this paper.

For $k\geq1$, suppose that $\Span_{\mathbb{C}}\{Y_i, \widetilde{S}_X Y_i\}_{i=1}^k\subset E_\lambda$ and $Y_{k+1}\in E_\lambda$ is orthogonal to $\Span_{\mathbb{C}}\{Y_i, \widetilde{S}_X Y_i\}_{i=1}^k$. Then it suffices to prove $$\widetilde{S}_X Y_{k+1}\bot \Span_{\mathbb{C}}\{Y_i, \widetilde{S}_X Y_i\}_{i=1}^k.$$
This is easily verified as follows:
$$
\begin{aligned}
&\Tr \Big(Y_i (\widetilde{S}_X Y_{k+1})^*\Big)=- \Tr \Big(Y_{k+1} (\widetilde{S}_X Y_{i})^*\Big)=0,\\
&\Tr \Big(\widetilde{S}_XY_i (\widetilde{S}_X Y_{k+1})^*\Big)=\Tr \Big(Y_{k+1} (T_X Y_{i})^*\Big)=\lambda \Tr \Big(Y_{k+1} (Y_{i})^*\Big)= 0.
\end{aligned}$$
The proof is complete.
\end{proof}

As the pair $(T, S)$ in Lemmas \ref{lem0}-\ref{lemma 2} , we can define a unitary skew-symmetric linear operator $S_X$ on $V=M(n,\mathbb{C})$ such that $T_X=S_X^*S_X=-S_X^2$ as follows.
Taking an orthonormal basis $\{v_i\}_{i=1}^N$ of $V$ such that $v_i$ is an eigenvector of the eigenvalue $\lambda_i(T_X)$,  we define $S_X$ on this basis by
$S_X(v_i):=\widetilde{S}_Xv_i=[X,v_i]^*$ and then extend it linearly to the whole space as
\begin{equation}\label{def-SX}
\begin{aligned}
S_X:  M(n,\mathbb{C}) &\longrightarrow  M(n,\mathbb{C}),\\
 Y=\sum_{i=1}^N y_i v_i &\longmapsto  \sum_{i=1}^Ny_i [X,v_i]^*, \quad \textit{for } y_1,\cdots,y_N\in\mathbb{C}.
\end{aligned}
\end{equation}
In particular, by the proof of Proposition \ref{prop2} we can choose the second half of the eigenvectors $v_i$'s of those positive eigenvalues $\lambda_i(T_X)$ to be the image of $\widetilde{S}_X$, namely, $$v_{i+\widetilde{n_i}}:=\widetilde{S}_Xv_i/\|\widetilde{S}_Xv_i\|=\widetilde{S}_Xv_i/\sqrt{\lambda_i(T_X)},$$
where $2\widetilde{n_i}$ is the even multiplicity of the positive eigenvalues $\lambda_i(T_X)$. As in Lemma \ref{lem0}, suppose that there are $g$ distinct positive eigenvalues $\lambda(T_X)=\{t_1=s_1^2> \cdots> t_g=s_g^2>0\}$  with multiplicities $2n_1, \cdots , 2n_g$, and denote by $r=2\sum_{j=1}^gn_j$ the rank of $T_X$. Under the special basis above, the linear operator $S_X$ can be represented by the real skew-symmetric matrix
$$S_X=\begin{pmatrix}\begin{array}{cc}
O & -s_1I_{n_1}\\
s_1I_{n_1}& O
\end{array}& & & \\
&\ddots&&\\&& \begin{array}{cc}
O & -s_gI_{n_g}\\
s_gI_{n_g}& O
\end{array}&\\ &&&O_{N-r}
\end{pmatrix},$$
while $T_X$ is represented by $T_X=\diag\Big(t_1I_{2n_1},\cdots,t_gI_{2n_g},O_{N-r}\Big)$. One can also reorder the basis in the way $v_{2i}=\widetilde{S}_Xv_{2i-1}/\sqrt{\lambda_{2i-1}(T_X)}$ such that
\begin{equation}\label{SX2}
S_X=\begin{pmatrix}I_{n_1}\otimes\begin{pmatrix}
0 & -s_1\\
s_1& 0
\end{pmatrix}& & & \\
&\ddots&&\\&& I_{n_g}\otimes\begin{pmatrix}
0 & -s_g\\
s_g& 0
\end{pmatrix}&\\ &&&O_{N-r}
\end{pmatrix}.
\end{equation}
Hence, Lemma \ref{lemma 2} is suitable for the pair $(T_X,S_X)$ and will be applied in the proof of the equivalence between Conjecture \ref{conj2} and Conjecture \ref{conj2A}.

We will also need the following notations and useful lemmas.
Let $\Vec$ be the canonical isomorphism from $M(n,\mathbb{C})$ to $\mathbb{C}^{N}$, i.e.
$$\begin{aligned}
\Vec : M(n,\mathbb{C})&\longrightarrow \mathbb{C}^{N},\\
X=(x_{ij})&\longmapsto (x_{11}, \cdots, x_{n1}, x_{12}, \cdots, x_{n2}, \cdots, x_{1n}, \cdots, x_{nn} )^t,
\end{aligned}$$
where $X^t$ is the transpose of $X$. Using Kronecker product of matrices, we have
\begin{lem}\label{lem1vec}\cite{HJ91}
$Vec(AYB)=\(B^t\otimes A\)Vec(Y)$.
\end{lem}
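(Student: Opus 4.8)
The plan is to prove the identity by reducing it to a column-by-column comparison, exploiting the fact that $\Vec$ is precisely the column-stacking operator defined above. Write $B=(b_{kj})$ and let $y_1,\dots,y_n\in\mathbb{C}^n$ denote the columns of $Y$, so that the $k$-th length-$n$ block of $\Vec(Y)$ is exactly $y_k$.

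First I would compute the columns of $AYB$. The $j$-th column of $YB$ is $\sum_{k=1}^n b_{kj}\,y_k$, hence the $j$-th column of $AYB$ is $\sum_{k=1}^n b_{kj}\,(Ay_k)$. Therefore the $j$-th length-$n$ block of $\Vec(AYB)$ equals $\sum_{k=1}^n b_{kj}\,(Ay_k)$. Next I would read off the block structure of $B^t\otimes A$: its $(j,k)$ block of size $n\times n$ is $(B^t)_{jk}\,A=b_{kj}\,A$. Applying this block matrix to $\Vec(Y)$, whose $k$-th block is $y_k$, the $j$-th block of the product is $\sum_{k=1}^n b_{kj}\,(Ay_k)$, which coincides with the $j$-th block of $\Vec(AYB)$ computed above. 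Since $j$ was arbitrary, the two vectors agree.

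There is essentially no obstacle here beyond index bookkeeping; the only point requiring care is that the column-stacking convention for $\Vec$ forces the transpose on $B$ (rather than on $A$) in the Kronecker factor, which is exactly what the computation of the $(j,k)$ block above records. Alternatively, one could first verify the identity on the rank-one matrices $Y=E_{kl}:=e_k e_l^t$ — for which $AYB=(Ae_k)(e_l^t B)$ has $j$-th column $b_{lj}(Ae_k)$, while $\Vec(Y)=e_l\otimes e_k$ gives $(B^t\otimes A)\Vec(Y)=(B^t e_l)\otimes(Ae_k)$ with $j$-th block $b_{lj}(Ae_k)$ — and then extend by linearity in $Y$; this yields the same conclusion.
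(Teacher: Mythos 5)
Your proof is correct. The paper does not actually prove this lemma --- it is quoted as a standard fact from Horn--Johnson \cite{HJ91} --- and your column-by-column computation (equivalently, the verification on the rank-one matrices $E_{kl}$ followed by linearity) is exactly the standard textbook argument, with the block-index bookkeeping for the paper's column-stacking convention of $\Vec$ carried out correctly.
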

Moreover, $\Vec$ is an isometry since
$\<X, Y\>=\<\Vec(X), \Vec(Y)\>$, and thus we can calculate the eigenvalues of $T_X$ by $$\lambda(T_X)=\lambda\(\Vec\circ T_X\circ (\Vec)^{-1}\).$$

\begin{prop}\label{VecTX}
$\lambda(T_X)=\lambda(K_X^*K_X)
=\lambda(K_1+K_2)$,
where $K_X=I\otimes X\ -X^t\otimes I$ and $K_1=I\otimes X^*X+\overline{X}X^t\otimes I, \ K_2=-X^t\otimes X^*-\overline{X}\otimes X$.
\end{prop}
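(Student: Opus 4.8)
The plan is to establish the two claimed identities separately, both by direct computation via the vectorization isomorphism $\Vec$. Recall that $T_X(Y) = [X^*,[X,Y]] = X^*XY - X^*YX - XYX^* + YXX^*$, so expanding the inner and outer commutators gives $T_X(Y) = X^*XY + YXX^* - X^*YX - XYX^*$. Applying $\Vec$ and Lemma \ref{lem1vec} term by term: $\Vec(X^*XY) = (I\otimes X^*X)\Vec(Y)$, $\Vec(YXX^*) = ((XX^*)^t\otimes I)\Vec(Y) = (\overline{X}X^t\otimes I)\Vec(Y)$ (using $(XX^*)^t = \overline{X^*}^t X^t = \overline{X}X^t$), $\Vec(X^*YX) = (X^t\otimes X^*)\Vec(Y)$, and $\Vec(XYX^*) = ((X^*)^t\otimes X)\Vec(Y) = (\overline{X}\otimes X)\Vec(Y)$. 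Collecting these, $\Vec\circ T_X\circ\Vec^{-1} = K_1 + K_2$ with $K_1 = I\otimes X^*X + \overline{X}X^t\otimes I$ and $K_2 = -X^t\otimes X^* - \overline{X}\otimes X$, exactly as stated. Since $\Vec$ is an isometry (as noted in the excerpt just before the proposition), $\lambda(T_X) = \lambda(K_1+K_2)$ follows.

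For the remaining identity $\lambda(T_X) = \lambda(K_X^*K_X)$ with $K_X = I\otimes X - X^t\otimes I$, I would use the observation from Proposition \ref{prop2} that $T_X = \widetilde{S}_X^*\widetilde{S}_X$ in a suitable sense — or more directly, that $\langle T_X Y, Y\rangle = \|[X,Y]\|^2$ (Proposition \ref{prop1}). Note that $\Vec([X,Y]) = \Vec(XY) - \Vec(YX) = (I\otimes X)\Vec(Y) - (X^t\otimes I)\Vec(Y) = K_X\Vec(Y)$. Hence $\|[X,Y]\|^2 = \|K_X\Vec(Y)\|^2 = \langle K_X^*K_X\Vec(Y),\Vec(Y)\rangle$, while $\langle T_X Y, Y\rangle = \langle(\Vec\circ T_X\circ\Vec^{-1})\Vec(Y),\Vec(Y)\rangle$. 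Since both $\Vec\circ T_X\circ\Vec^{-1}$ and $K_X^*K_X$ are self-dual (Hermitian) operators on $\mathbb{C}^N$ agreeing on all quadratic forms, they are equal; in particular they share the same spectrum, giving $\lambda(T_X) = \lambda(K_X^*K_X)$. Alternatively, one can verify $K_X^*K_X = K_1 + K_2$ algebraically: $K_X^* = I\otimes X^* - \overline{X}\otimes I$, and expanding $K_X^*K_X = (I\otimes X^* - \overline{X}\otimes I)(I\otimes X - X^t\otimes I)$ using $(A\otimes B)(C\otimes D) = AC\otimes BD$ yields $I\otimes X^*X - X^t\otimes X^* - \overline{X}\otimes X + \overline{X}X^t\otimes I = K_1 + K_2$.

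There is essentially no serious obstacle here; the statement is a bookkeeping exercise in the algebra of Kronecker products and the transpose/conjugate identities $(AB)^t = B^tA^t$ and $\overline{A^*}^t = A$. The only point requiring mild care is tracking which of $X$, $X^t$, $\overline{X}$, $X^*$ appears in each tensor slot — in particular getting $(XX^*)^t = \overline{X}X^t$ and $(X^*)^t = \overline{X}$ right — and being consistent with the convention in Lemma \ref{lem1vec} that $\Vec(AYB) = (B^t\otimes A)\Vec(Y)$. I would present the proof as the short computation $K_X\Vec(Y) = \Vec([X,Y])$ followed by the expansion of $K_X^*K_X$ into $K_1 + K_2$, which simultaneously yields both asserted equalities once combined with the isometry property of $\Vec$.
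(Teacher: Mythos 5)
Your proposal is correct and follows essentially the same route as the paper: both rest on $\Vec([X,Y])=K_X\Vec(Y)$ from Lemma \ref{lem1vec} and the Kronecker-product expansion of $K_X^*K_X$ into $K_1+K_2$. The paper reaches $K_X^*K_X$ slightly more directly by writing $T_X=\Phi_{X^*}\circ\Phi_X$ with $\Phi_X(Y)=[X,Y]$ and noting $K_{X^*}=K_X^*$, which makes your polarization detour through quadratic forms unnecessary, but your alternative direct expansion is exactly the paper's computation.
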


\begin{proof}
By Lemma \ref{lem1vec}, we have $\Vec\([X, Y]\)=K_X \Vec(Y)$,
where $$K_X=I\otimes X\ -X^t\otimes I$$ is regarded as a linear operator on $\mathbb{C}^N$, or equivalently as a $N\times N$ matrix. It is easily seen that $K_{X^*}=K_X^*$.

Define ${\Phi}_X(Y):=[X, Y]$, then $$\Vec\circ {\Phi}_X\circ (\Vec)^{-1}=K_X, \quad  T_X={\Phi}_{X^*}\circ{\Phi}_X.$$
In particular, we have
$$\Vec\circ T_X\circ (\Vec)^{-1}=K_{X^*}K_X=K_X^*K_X,$$
hence $$\lambda(T_X)=\lambda(K_X^*K_X).$$
By direct calculation, we have $K_X^*K_X=K_1+K_2$,
where $$K_1=I\otimes X^*X+\overline{X}X^t\otimes I, \quad K_2=-\(X^t\otimes X^*+\overline{X}\otimes X\)$$
are Hermitian matrices.
\end{proof}

\begin{cor}\label{cortrTX}
For $X\in M(n,\mathbb{C})$ with $\|X\|=1$, we have
$\Tr T_X=2n-2|\Tr X|^2$. In particular, for  $n=2$, $\lambda_1(T_X)=\lambda_2(T_X)=2-|\Tr X|^2$ and $\lambda_3(T_X)=\lambda_4(T_X)=0$.
\end{cor}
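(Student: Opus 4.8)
The plan is to derive the trace formula directly from Proposition \ref{VecTX} and then use the structural facts about $T_X$ (its kernel and the even multiplicity of its positive eigenvalues) to pin down all four eigenvalues when $n=2$.

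First I would compute $\Tr T_X$. Since $\lambda(T_X)=\lambda(K_1+K_2)$ by Proposition \ref{VecTX}, we have $\Tr T_X=\Tr K_1+\Tr K_2$, and both terms are evaluated using the multiplicativity of trace over Kronecker products, $\Tr(A\otimes B)=\Tr A\,\Tr B$. For $K_1=I\otimes X^*X+\overline{X}X^t\otimes I$ one gets $\Tr(I\otimes X^*X)=n\,\Tr(X^*X)=n\|X\|^2=n$ and $\Tr(\overline{X}X^t\otimes I)=n\,\Tr(\overline{X}X^t)=n\sum_{i,j}|X_{ij}|^2=n\|X\|^2=n$, so $\Tr K_1=2n$. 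For $K_2=-\bigl(X^t\otimes X^*+\overline{X}\otimes X\bigr)$ one gets $\Tr(X^t\otimes X^*)=\Tr X\cdot\overline{\Tr X}=|\Tr X|^2$ and likewise $\Tr(\overline{X}\otimes X)=|\Tr X|^2$, so $\Tr K_2=-2|\Tr X|^2$. Adding these gives $\Tr T_X=2n-2|\Tr X|^2$.

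Next, for the case $n=2$, so $N=n^2=4$: I would observe that the identity matrix lies in the kernel of $T_X$, since $T_X(I)=[X^*,[X,I]]=[X^*,0]=0$. Hence $\rank T_X\le n^2-1=3$. By Proposition \ref{prop2} every positive eigenvalue of $T_X$ has even multiplicity, so $\rank T_X$ is even, which forces $\rank T_X\in\{0,2\}$ and therefore $\lambda_3(T_X)=\lambda_4(T_X)=0$. Using $\lambda_1(T_X)=\lambda_2(T_X)$ (again Proposition \ref{prop2}) together with the trace formula just established, $2\lambda_1(T_X)=\Tr T_X=2\cdot 2-2|\Tr X|^2$, whence $\lambda_1(T_X)=\lambda_2(T_X)=2-|\Tr X|^2$.

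There is no serious obstacle here; the argument is essentially a bookkeeping computation plus one clean observation ($I\in\ker T_X$). The only points requiring mild care are keeping the conjugates and transposes straight in the trace of $\overline{X}X^t$, and noting that it is precisely the even-multiplicity property of Proposition \ref{prop2} — not merely the three-dimensional bound coming from tracelessness of commutators — that collapses the rank from a potential $3$ down to $2$.
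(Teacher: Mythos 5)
Your proof is correct and follows essentially the same route as the paper: the trace formula comes from $\Tr K_1+\Tr K_2$ via multiplicativity of the trace over Kronecker products, and the $n=2$ case combines the even-multiplicity property of Proposition \ref{prop2} with the existence of a zero eigenvalue. The only (immaterial) difference is that you exhibit $I\in\ker T_X$ where the paper uses $X\in\ker T_X$.
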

\begin{proof}
It follows immediately from Proposition \ref{VecTX} that
$$
\Tr T_X= \Tr K_1 +\Tr K_2=2n\|X\|^2-2|\Tr X|^2=2n-2|\Tr X|^2.
$$
For $n=2$, the conclusion follows from Proposition \ref{prop2} and the fact that $T_XX=0$ and thus $T_X$ must have a zero eigenvalue.
\end{proof}

To end this section, we prepare two useful lemmas about eigenvalues of Kronecker product and sum of two matrices.
\begin{lem}\label{lem2}\cite{Z11}
Let $A$ and $B$ be $m\times m$ and $n\times n$ complex matrices with eigenvalues
$\lambda_1, \cdots, \lambda_m$ and $\mu_1, \cdots, \mu_n$, respectively.
Then the eigenvalues of $A\otimes B$ are $$\lambda_i\mu_j, 1\leq i\leq m, 1\leq j\leq n,$$
and the eigenvalues of $A\otimes I_n+I_m\otimes B$ are $$\lambda_i+\mu_j, 1\leq i\leq m, 1\leq j\leq n.$$
\end{lem}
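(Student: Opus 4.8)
The plan is to reduce both assertions to the case of upper triangular matrices by Schur's unitary triangularization theorem, and then read the eigenvalues off the diagonal. First I would invoke Schur's theorem to obtain unitary matrices $U\in U(m)$ and $V\in U(n)$ such that $U^*AU=T_A$ and $V^*BV=T_B$ are both upper triangular; since conjugation preserves the characteristic polynomial, the diagonal entries of $T_A$ are $\lambda_1,\dots,\lambda_m$ (in some order) and those of $T_B$ are $\mu_1,\dots,\mu_n$.

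Next I would use that $U\otimes V$ is unitary together with the mixed-product identity $(P\otimes Q)(R\otimes S)=(PR)\otimes(QS)$ for Kronecker products (standard, and also a direct consequence of the vectorization formalism, cf. \cite{HJ91}) to obtain
$$(U\otimes V)^*(A\otimes B)(U\otimes V)=(U^*AU)\otimes(V^*BV)=T_A\otimes T_B,$$
and likewise
$$(U\otimes V)^*\(A\otimes I_n+I_m\otimes B\)(U\otimes V)=T_A\otimes I_n+I_m\otimes T_B.$$
Since unitary conjugation does not change the spectrum, it suffices to prove the lemma when $A=T_A$ and $B=T_B$ are upper triangular.

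The last step is elementary: the Kronecker product of two upper triangular matrices is block upper triangular with $(i,i)$ diagonal block $(T_A)_{ii}T_B$, hence upper triangular overall with diagonal entries $(T_A)_{ii}(T_B)_{jj}=\lambda_i\mu_j$, which are therefore its eigenvalues; in the same way $T_A\otimes I_n+I_m\otimes T_B$ is upper triangular with diagonal entries $(T_A)_{ii}+(T_B)_{jj}=\lambda_i+\mu_j$. The only point needing care is the index bookkeeping to confirm that "block upper triangular with upper triangular diagonal blocks" really is upper triangular in the chosen ordering; this is routine and is the closest thing to an obstacle. (As a shortcut in the diagonalizable case one may instead exhibit eigenvectors directly: if $Ax=\lambda x$ and $By=\mu y$ then $(A\otimes B)(x\otimes y)=(Ax)\otimes(By)=\lambda\mu\,(x\otimes y)$ and $\(A\otimes I_n+I_m\otimes B\)(x\otimes y)=(\lambda+\mu)(x\otimes y)$, and running $x,y$ over eigenbases of $A,B$ gives a full eigenbasis $\{x_i\otimes y_j\}$ of $\mathbb{C}^{mn}$; the general case then follows by density of diagonalizable matrices and continuity of eigenvalues. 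The triangularization argument, however, covers all cases at once.)
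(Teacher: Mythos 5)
Your proof is correct: Schur triangularization plus the mixed-product property reduces both claims to the upper triangular case, where the diagonal entries can be read off, and this is exactly the standard argument from the cited references. The paper itself quotes this lemma from \cite{Z11} without proof, so there is nothing further to compare against.
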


\begin{lem}\label{lem3}\cite{Z11}
Let $A, B$ be $n\times n$ Hermitian matrices and $C=A+B$.
If $\alpha_1\geq\cdots\geq\alpha_n$, $\beta_1\geq\cdots\geq\beta_n$, and
$\gamma_1\geq\cdots\geq\gamma_n$ are the eigenvalues of $A, B$, and $C$, respectively.
Then for any sequence $1\leq i_1<\cdots<i_k\leq n$,
$$\sum_{t=1}^k\alpha_{i_t}+\sum_{t=1}^k\beta_{n-k+t}\leq\sum_{t=1}^k\gamma_{i_t}\leq\sum_{t=1}^k\alpha_{i_t}+\sum_{t=1}^k\beta_t.$$
\end{lem}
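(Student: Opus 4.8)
The plan is to prove the right-hand (Lidskii--Wielandt) inequality
$$\sum_{t=1}^k\gamma_{i_t}\leq\sum_{t=1}^k\alpha_{i_t}+\sum_{t=1}^k\beta_t$$
first, and then to derive the left-hand inequality from it by a sign change. The key tools will be two classical facts about Hermitian forms. The first is \emph{Ky Fan's maximum principle}: for a Hermitian matrix $H$ and any orthonormal vectors $x_1,\dots,x_k$ one has $\sum_{t=1}^k\langle Hx_t,x_t\rangle\leq\sum_{t=1}^k\lambda_t(H)$. The second is the \emph{dual Wielandt minimax principle}: for $1\leq i_1<\cdots<i_k\leq n$,
$$\sum_{t=1}^k\lambda_{i_t}(H)=\min_{\substack{T_1\supseteq\cdots\supseteq T_k\\ \dim T_t=n-i_t+t}}\ \max_{\substack{x_1,\dots,x_k\ \mathrm{orthonormal}\\ x_t\in T_t}}\ \sum_{t=1}^k\langle Hx_t,x_t\rangle ,$$
which is exactly the Courant--Fischer characterization when $k=1$ and is obtained from it by induction on $k$ (or may simply be quoted from \cite{Z11}).

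For the upper bound I would apply the dual Wielandt principle to $A$ and pick a decreasing flag $T_1\supseteq\cdots\supseteq T_k$, $\dim T_t=n-i_t+t$, attaining the minimum, so that every orthonormal tuple $x_t\in T_t$ satisfies $\sum_{t=1}^k\langle Ax_t,x_t\rangle\leq\sum_{t=1}^k\alpha_{i_t}$. For such a tuple, writing $W=\Span\{x_1,\dots,x_k\}$ and using $C=A+B$,
$$\sum_{t=1}^k\langle Cx_t,x_t\rangle=\sum_{t=1}^k\langle Ax_t,x_t\rangle+\sum_{t=1}^k\langle Bx_t,x_t\rangle\leq\sum_{t=1}^k\alpha_{i_t}+\sum_{t=1}^k\beta_t,$$
the last step being Ky Fan's principle applied to $B$ on the $k$-dimensional space $W$. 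Taking the maximum over all such tuples and then invoking the dual Wielandt principle for $C$, evaluated at this particular flag (so that $\sum_t\gamma_{i_t}$ is bounded by the value there), yields $\sum_{t=1}^k\gamma_{i_t}\leq\sum_{t=1}^k\alpha_{i_t}+\sum_{t=1}^k\beta_t$.

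The lower bound then follows at once by applying the inequality just proved to the identity $A=C+(-B)$, with the same index set $i_1<\cdots<i_k$. The eigenvalues of $-B$ listed in decreasing order are $-\beta_n\geq\cdots\geq-\beta_1$, so their first $k$ terms sum to $-\sum_{t=1}^k\beta_{n-k+t}$; hence the upper bound reads $\sum_{t=1}^k\alpha_{i_t}\leq\sum_{t=1}^k\gamma_{i_t}-\sum_{t=1}^k\beta_{n-k+t}$, which is the desired left-hand inequality after transposing one term.

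I expect the only genuine obstacle to be the dual Wielandt minimax principle itself: when the index set is $\{1,\dots,k\}$ the upper bound is a one-line consequence of Ky Fan alone, and all the difficulty of a general $\{i_1,\dots,i_k\}$ is packaged into that minimax formula, whose inductive proof (restricting $H$ to suitable subspaces and reusing Courant--Fischer) is somewhat technical but entirely standard; everything else above is bookkeeping.
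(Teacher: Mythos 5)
Your overall strategy (Wielandt's minimax principle for the index set $\{i_1,\dots,i_k\}$ plus Ky Fan's principle for $B$, then a sign change $A=C+(-B)$ for the left inequality) is the classical Wielandt proof of Lidskii's theorem, and the sign-change step is handled correctly. Note that the paper itself does not prove this lemma at all — it is quoted from [Z11] — so you are supplying a proof of a cited classical fact, which is fine in principle. However, there is a genuine error in the one statement that carries all the weight: your ``dual Wielandt minimax principle'' is false as written. The correct dual form is
$\sum_{t=1}^k\lambda_{i_t}(H)=\min\max$ over \emph{decreasing} flags with $\dim T_t=n-i_t+1$, not $n-i_t+t$; the two agree only when $k=1$, which is why your Courant--Fischer sanity check did not catch it. Concretely, take $n=3$, $H=\diag(3,2,1)$, $k=2$, $(i_1,i_2)=(1,3)$, so $\sum_t\lambda_{i_t}(H)=4$. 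Your constraints force $T_1=\mathbb{C}^3$ and $\dim T_2=2$; any such $T_2$ meets $\Span_{\mathbb{C}}\{e_1,e_2\}$ in a unit vector $x_2$, and taking $x_1$ to be the unit vector of $\Span_{\mathbb{C}}\{e_1,e_2\}$ orthogonal to $x_2$ gives $\langle Hx_1,x_1\rangle+\langle Hx_2,x_2\rangle=5>4$. Hence the minimum over your flags is at least $5$, and in particular \emph{no} flag with your dimensions has the property your proof requires, namely that every orthonormal tuple $x_t\in T_t$ satisfies $\sum_t\langle Ax_t,x_t\rangle\leq\sum_t\alpha_{i_t}$. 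Since steps two and three of your argument rest exactly on choosing such a flag for $A$, the proof as written breaks down for every index set other than $\{1,\dots,k\}$.

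The fix is routine but must be made: state the dual principle with $\dim T_t=n-i_t+1$ (or work with the primal max--min form over increasing flags $V_1\subset\cdots\subset V_k$ with $\dim V_t=i_t$, which dualizes to this), choose a flag optimal for $A$ in that corrected sense, and the rest of your bookkeeping — Ky Fan for $B$ on the span of the tuple, the lower-bound direction of the minimax applied to $C$ on the same flag, and the $A=C+(-B)$ trick — goes through verbatim and recovers the standard Lidskii--Wielandt argument. Be aware, too, that you have deferred the proof of the minimax principle itself (``standard induction''), so in your write-up either prove it or cite it precisely; since all of the difficulty of a general index set is packaged there, quoting it in an incorrect form is not a cosmetic slip.
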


\section{Some new proofs of the complex BW inequality}\label{sect-BW}
In this section, we will give some new simple proofs of the complex BW inequality by eigenvalue estimates of $T_X$ in (\ref{def-TX}) for $X\in M(n,\mathbb{C})$ with $\|X\|=1$. Each estimate implies $\lambda_1(T_X)\leq2$ and thus the complex BW inequality since for $\|Y\|=1$, $$\|[X,Y]\|^2\leq \max_{\|Y\|=1}\|[X,Y]\|^2=\max_{\|Y\|=1}\langle T_XY,Y\rangle=\lambda_1(T_X)\leq2=2\|X\|^2\|Y\|^2.$$
As a matter of fact, the core of our approach lies in the fact that the multiplicity of positive eigenvalues of $T_X$ is even by Proposition \ref{prop2}.


\begin{thm}\label{thm1}
Let $X=A+B\in M(n,\mathbb{C})$ be the canonical decomposition and $\|X\|=1$, where $A$ is Hermitian, $B$ is skew-Hermitian. Then
$$\begin{aligned}
\lambda_1(T_X)
&\leq 2\Big(\max_{i,j}\{-a_ia_j\}+\max_{i,j}\{-b_ib_j\}\Big)
+\Big({\sigma}^2_1(X)+{\sigma}^2_2(X)\Big) \leq2,\\
\end{aligned}$$
where  ${\sigma}_1(X)\geq\cdots\geq{\sigma}_n(X)$ are singular values of $X$ and   $\lambda(A)=\{a_1, \cdots, a_n\}$, $\lambda(B)=\{b_1\mathbf{i}, \cdots,
b_n\mathbf{i}\}$
are eigenvalues of $A$,
$B$ respectively.
\end{thm}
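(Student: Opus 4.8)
The plan is to translate the eigenvalue estimate for $T_X$ into a Hermitian-matrix problem via Proposition \ref{VecTX}, and then estimate $\lambda_1$ of the sum $K_1+K_2$ using the splitting lemma \ref{lem3} together with the product structure of Kronecker eigenvalues in Lemma \ref{lem2}. Writing $X=A+B$ with $A$ Hermitian, $B$ skew-Hermitian, I would first diagonalize $A$ and $B$ as far as possible by a \emph{single} unitary congruence; since $\lambda(T_X)$ is invariant under unitary congruence of $X$ (Lemma \ref{lem-inv}), this changes nothing on the left side. Using $X^*X = (A-B)(A+B)$ and $X X^* = (A+B)(A-B)$, one rewrites $K_1 = I\otimes X^*X + \overline X X^t\otimes I$ and $K_2 = -(X^t\otimes X^* + \overline X\otimes X)$ and tries to separate the "diagonal" contribution (coming from $A^2$ and $B^2$, which are both genuinely diagonalizable by a common unitary if we are lucky, or at least whose eigenvalues are $a_i^2$ and $-b_i^2$) from the genuinely non-commuting cross terms $AB+BA$.

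The key device is Proposition \ref{prop2}: the positive eigenvalues of $T_X$ come in equal pairs, so $\lambda_1(T_X) = \lambda_2(T_X)$, hence $2\lambda_1(T_X) \le \lambda_1(T_X)+\lambda_2(T_X) \le$ (the top-two eigenvalue sum of $K_1+K_2$). Now apply Lemma \ref{lem3} with $k=2$: the top-two sum of $K_1+K_2$ is bounded by $(\alpha_1+\alpha_2) + (\beta_1+\beta_2)$ where $\alpha$'s are eigenvalues of (a suitable piece of $K_1$, or of $K_1$ itself split again) and $\beta$'s eigenvalues of $K_2$. For the $K_2$ part, $K_2 = -(X^t\otimes X^* + \overline X\otimes X) = -(\overline X\otimes X + \overline{(\overline X\otimes X)}^{\,\dots})$; after observing $X^t\otimes X^* = \overline{(\overline X \otimes X)}$ in the appropriate sense, $K_2$ is essentially $-2\,\mathrm{Re}$ of $\overline X\otimes X$, whose singular values are $\sigma_i(X)\sigma_j(X)$ by Lemma \ref{lem2}; this produces the $\sigma_1^2(X)+\sigma_2^2(X)$ term after taking the two largest. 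For the $K_1$ part, splitting $X^*X$ and $XX^*$ further via the canonical decomposition $X^*X = A^2+B^2+(AB-BA)$-type identities, the commutator pieces are traceless skew contributions that cancel against $K_2$'s real part, leaving $I\otimes(A^2+B^2) + \overline{(A^2+B^2)}\otimes I$ modulo lower-order terms; its eigenvalues are $(a_i^2 - b_i^2) + (a_j^2 - b_j^2)$ by Lemma \ref{lem2}, and bounding the largest two forces the $\max_{i,j}\{-a_ia_j\}+\max_{i,j}\{-b_ib_j\}$ shape once one regroups $a_i^2+a_j^2$ versus $-2a_ia_j$ and uses $\sum a_i^2 + \sum b_i^2 = \|X\|^2 = 1$. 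Finally, the second inequality $\le 2$: since $\max_{i,j}\{-a_ia_j\} \le \tfrac14(\sum a_i^2 + \text{something})$-type bounds and $\sigma_1^2(X)+\sigma_2^2(X)\le \sum\sigma_i^2(X) = \|X\|^2 = 1$, a short arithmetic argument (possibly Cauchy--Schwarz on the $a_i,b_i$) collapses everything to $2$.

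The main obstacle I anticipate is the bookkeeping in the split of $K_1+K_2$: showing cleanly that the "cross" commutator terms $AB\pm BA$ appearing inside $X^*X$, $XX^*$ and inside $K_2$ organize into a single Hermitian matrix whose eigenvalues are controlled by $\sigma_1^2(X)+\sigma_2^2(X)$, while the remaining diagonalizable part contributes exactly $2(\max\{-a_ia_j\}+\max\{-b_ib_j\})$, rather than a messier expression. In particular one must be careful that applying Lemma \ref{lem3} twice (first to peel $K_2$ off, then possibly again inside $K_1$) does not lose too much — the worst index choice $1\le i_1<i_2\le N$ must be compatible across the two splittings, and here the parity fact $\lambda_1=\lambda_2$ is exactly what licenses summing two eigenvalues instead of doubling the first. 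A secondary technical point is verifying the identity $K_X^*K_X = K_1+K_2$ with the correct conjugates/transposes (already granted by Proposition \ref{VecTX}) and matching $X^t\otimes X^*$ with a conjugate of $\overline X\otimes X$ so that $K_2$ is manifestly $-2\,\mathrm{Re}$ of a rank-structured operator whose singular values are products $\sigma_i\sigma_j$.
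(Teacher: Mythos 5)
Your overall framework matches the paper's: vectorize via Proposition \ref{VecTX} to get $\lambda(T_X)=\lambda(K_1+K_2)$, use the pairing $\lambda_1(T_X)=\lambda_2(T_X)$ from Proposition \ref{prop2}, and split off $K_2$ with Lemma \ref{lem3}. But there are three concrete gaps that would prevent your version from closing. First, the key algebraic fact you are groping for is the exact identity
$$K_2=-X^t\otimes X^*-\overline{X}\otimes X=2\bigl(B^t\otimes B-A^t\otimes A\bigr),$$
obtained by expanding $X^t=\overline A-\overline B$, $X^*=A-B$, $\overline X=\overline A+\overline B$: the cross terms $\overline A\otimes B$, $\overline B\otimes A$ cancel identically, with no residue to "cancel against $K_1$''. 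This is what produces the $\max_{i,j}\{-a_ia_j\}+\max_{i,j}\{-b_ib_j\}$ terms, via $\lambda(-A^t\otimes A)=\{-a_ia_j\}$, $\lambda(B^t\otimes B)=\{-b_ib_j\}$ and $\lambda_1(K_2)\le 2\bigl(\lambda_1(-A^t\otimes A)+\lambda_1(B^t\otimes B)\bigr)\le\|A\|^2+\|B\|^2=1$ (each max of $-a_ia_j$ is at most $\tfrac12\max_{i\neq j}(a_i^2+a_j^2)\le\tfrac12\|A\|^2$). Your alternative — bounding $K_2$ by the singular values $\sigma_i(X)\sigma_j(X)$ of $\overline X\otimes X$ — only gives $\lambda_1(K_2)\le 2\sigma_1^2(X)$, which can equal $2$ and is far too weak.

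Second, you have the roles of $K_1$ and $K_2$ reversed: the term $\sigma_1^2(X)+\sigma_2^2(X)$ is simply $\lambda_2(K_1)$, because $\lambda(K_1)=\{\sigma_i^2(X)+\sigma_j^2(X)\}$ follows directly from Lemma \ref{lem2}; no further decomposition of $K_1$ is needed. Your proposed decomposition of $K_1$ into $I\otimes(A^2+B^2)+\overline{(A^2+B^2)}\otimes I$ with eigenvalues $(a_i^2-b_i^2)+(a_j^2-b_j^2)$ is false in general, since $A$ and $B$ need not commute, so $\lambda(A^2-B^2)\neq\{a_i^2-b_i^2\}$. Third, your use of Lemma \ref{lem3} with the top-two sum is lossy: it brings in $\alpha_1=\lambda_1(K_1)=2\sigma_1^2(X)$, and the resulting bound $2\lambda_1(T_X)\le 3\sigma_1^2+\sigma_2^2+\lambda_1(K_2)+\lambda_2(K_2)$ only yields $\lambda_1(T_X)\le 5/2$. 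The correct move is the single second-eigenvalue estimate from Lemma \ref{lem3} with the sequence $i_1=2$, namely $\lambda_1(T_X)=\lambda_2(T_X)=\lambda_2(K_1+K_2)\le\lambda_2(K_1)+\lambda_1(K_2)\le\sigma_1^2(X)+\sigma_2^2(X)+1\le 2$.
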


\begin{proof}
Let ${\sigma}_1(X)\geq\cdots\geq{\sigma}_n(X)$ be singular values of $X$,
then $$\lambda\(X^*X\)=\lambda(\overline{X}X^t)=\{{\sigma}_1^2(X), \cdots, {\sigma}_n^2(X)\}.$$
Hence for $K_1=I\otimes X^*X+\overline{X}X^t\otimes I$ in Proposition \ref{VecTX}, we have by Lemma \ref{lem2}
\begin{equation}\label{eigenvK1}
\lambda(K_1)=\{{\sigma}_i^2(X)+{\sigma}_j^2(X): 1\leq i, j\leq n\}.
\end{equation}
In particular, $\lambda_2(K_1)={\sigma}^2_1(X)+{\sigma}^2_2(X)$.
Let $X=A+B$, where $A$ is Hermitian, $B$ is skew-Hermitian.
Thus for $K_2$ in Proposition \ref{VecTX}, we have $$K_2=-X^t\otimes X^*-\overline{X}\otimes X=2\(B^t\otimes B-A^t\otimes A\).$$
Then by Lemma \ref{lem2},
$$\lambda(-A^t\otimes A)=\{-a_ia_j: 1\leq i, j\leq n\}, $$
$$\lambda(B^t\otimes B)=\{-b_ib_j: 1\leq i, j\leq n\} ,$$
where  $\lambda(A)=\{a_1, \cdots, a_n\}$, $a_1\geq\cdots\geq a_n$;
$\lambda(B)=\{b_1\mathbf{i}, \cdots, b_n\mathbf{i}\}$,
$b_1\geq\cdots\geq b_n$.
Therefore
\begin{equation}\label{BWA}
\begin{aligned}
\lambda_1(-A^t\otimes A)&=\max_{i,j}\{-a_ia_j\}=\max\{\max_{i\neq j}\{-a_ia_j\}, \max_{i}\{-a_i^2\}\}\\
&\leq\max\{\max_{i\neq j}\{-a_ia_j\}, 0\}\leq\max_{i\neq j}\{|a_ia_j|\}\\
&\leq\frac12\max_{i\neq j}\{a_i^2+a_j^2\}\leq\frac12\|A\|^2,
\end{aligned}
\end{equation}
Similarly
\begin{equation}\label{BWB}
\lambda_1(B^t\otimes B)=\max_{i,j}\{-b_ib_j\}\leq\frac12\max_{i\neq j}\{b_i^2+b_j^2\}\leq\frac12\|B\|^2.
\end{equation}
Since $B^t\otimes B$ and $-A^t\otimes A$ are Hermitian, by Lemma \ref{lem3}, we have
\begin{equation}\label{BWK2}
\begin{aligned}
\lambda_1(K_2)&\leq2\(\lambda_1(B^t\otimes B)+\lambda_1(-A^t\otimes A)\)=2\Big(\max_{i,j}\{-a_ia_j\}+\max_{i,j}\{-b_ib_j\}\Big)\\
&\leq\|A\|^2+\|B\|^2=\|X\|^2=1.
\end{aligned}
\end{equation}
Moreover, for $K_X^*K_X=K_1+K_2$ in Proposition \ref{VecTX}, again by Lemma \ref{lem3} we have
$$\lambda_2(K_X^*K_X)\leq \lambda_2(K_1)+\lambda_1(K_2)\leq {\sigma}_1^2(X)+{\sigma}_2^2(X)+\|X\|^2
\leq2\|X\|^2.$$
Finally by Proposition \ref{prop2} and \ref{VecTX}, we have the required estimation  $$\lambda_1(T_X)=\lambda_2(T_X)=\lambda_2(K_X^*K_X)\leq2\|X\|^2=2.$$
The proof is complete.
\end{proof}

For $X\in M(n,\mathbb{C})$ with $\|X\|=1$, we have the following characterization of when $\lambda_1(T_X)$ attains the upper bound $2$.
\begin{thm}\label{equal}
 $\lambda_1(T_X)=2$ if and only if $X=U\diag(X_0, O_{n-2})U^*$ for some $U\in U(n)$, where $X_0\in M(2, \mathbb{C})$ and $\Tr(X_0)=0$.
\end{thm}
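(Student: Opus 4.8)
The plan is to extract equality conditions by tracing back through each inequality used in the proof of Theorem \ref{thm1}. The sufficiency direction is the easy part: if $X = U\diag(X_0,O_{n-2})U^*$ with $X_0$ a traceless $2\times 2$ matrix of Frobenius norm $1$, then by Lemma \ref{lem-inv} we may assume $X = \diag(X_0, O_{n-2})$, and a direct computation (or Corollary \ref{cortrTX} applied in the $2\times 2$ block, where $\Tr T_{X_0} = 4 - 2|\Tr X_0|^2 = 4$ and $T_{X_0}$ has rank $2$ by Proposition \ref{prop2} together with $T_{X_0}X_0 = 0$) gives $\lambda_1(T_{X_0}) = \lambda_2(T_{X_0}) = 2$; since the $2\times 2$ block acts independently of the zero block, $\lambda_1(T_X) = 2$.

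For the necessity direction, suppose $\lambda_1(T_X) = 2$. Then every inequality in the chain
$$2 = \lambda_1(T_X) = \lambda_2(K_X^*K_X) \leq \lambda_2(K_1) + \lambda_1(K_2) \leq \sigma_1^2(X) + \sigma_2^2(X) + \|X\|^2 \leq 2$$
must be an equality. From $\sigma_1^2(X) + \sigma_2^2(X) + \|X\|^2 = 2\|X\|^2 = 2$ we get $\sigma_1^2(X) + \sigma_2^2(X) = 1 = \|X\|^2$, hence $\sigma_3(X) = \cdots = \sigma_n(X) = 0$, so $\rank X \leq 2$. Next, equality $\lambda_1(K_2) = \|X\|^2 = 1$ forces equality in \eqref{BWK2}, hence in \eqref{BWA} and \eqref{BWB}: writing $X = A + B$ with $A$ Hermitian, $B$ skew-Hermitian and eigenvalues $a_1 \geq \cdots \geq a_n$, $b_1 \geq \cdots \geq b_n$, we obtain $\max_{i\neq j}\{-a_ia_j\} = \tfrac12\max_{i\neq j}\{a_i^2+a_j^2\} = \tfrac12\|A\|^2$ and likewise for $b$. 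The first of these chains forces $a_i = -a_j$ for the optimizing pair and all other $a_k = 0$; that is, $\lambda(A) = \{a, -a, 0, \ldots, 0\}$ for some $a \geq 0$, and similarly $\lambda(B) = \{b\oi, -b\oi, 0, \ldots, 0\}$. Finally, equality $\lambda_2(K_X^*K_X) = \lambda_2(K_1) + \lambda_1(K_2)$ in Lemma \ref{lem3} means the top eigenvector of $K_X^*K_X$ (for its second eigenvalue — in the even-multiplicity sense) is simultaneously an eigenvector for $\lambda_2(K_1)$ and $\lambda_1(K_2)$.

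The main work, and the step I expect to be the obstacle, is converting these spectral constraints on $A$ and $B$ separately plus the shared-eigenvector condition into the structural statement that $X$ is unitarily equivalent to $\diag(X_0, O_{n-2})$ with $\Tr X_0 = 0$. The traceless condition is immediate once the block structure is known: $\Tr X_0 = \Tr X = \sum a_i + \oi\sum b_i = 0$ from the eigenvalue patterns above. For the block structure, I would argue that $\rank X \leq 2$ together with the requirement $\lambda_1(T_X) = 2$ (which the BW equality analysis of \cite{CVW10} or a direct argument ties to the commutant structure) forces $X$ to have a $2$-dimensional invariant subspace on which it acts traceless-ly and to annihilate its orthogonal complement. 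Concretely: since $\rank X \leq 2$, up to unitary congruence $X = \begin{pmatrix} X_0 & C \\ O & O \end{pmatrix}$ with $X_0 \in M(2,\mathbb{C})$; one then shows $C = O$ and $\Tr X_0 = 0$ by feeding the test matrix $Y = E$ (a rank-one matrix supported appropriately) into $\langle T_X Y, Y\rangle = \|[X,Y]\|^2$ and using that this must equal $2\|Y\|^2$ for the maximizing $Y$, which pins down both $C = O$ and the trace. I would carry this out by first reducing to the $\rank X \leq 2$ normal form, then exploiting the equality case of the Cauchy–Schwarz step inside Lemma \ref{lemma 1} (equivalently, that the maximizing $Y$ makes $[X,Y]$ proportional in the relevant sense), and finally checking the two scalar conditions directly.
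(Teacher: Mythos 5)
Your sufficiency argument and the first half of your necessity argument match the paper's: forcing equality throughout the chain in Theorem \ref{thm1} does give $\sigma_3(X)=\cdots=\sigma_n(X)=0$, $\lambda(A)=\{a,-a,0,\dots,0\}$, $\lambda(B)=\{b\oi,-b\oi,0,\dots,0\}$, and hence $\Tr X=0$. But the decisive step --- showing that $A$ and $B$ are supported on the \emph{same} two-dimensional subspace, so that $X$ really is unitarily congruent to $\diag(X_0,O_{n-2})$ --- is exactly the step you defer, and your sketch for it does not close the gap. Knowing $\rank X\le 2$ only puts $X$ in the form $\begin{pmatrix} X_0 & C\\ O & O\end{pmatrix}$; the claim that ``feeding a test matrix $Y=E$ into $\langle T_XY,Y\rangle$ pins down $C=O$'' is not a proof, since you neither identify the maximizing $Y$ nor carry out the computation, and appealing to the known equality case of \cite{CVW10} would assume the very structure the theorem is meant to establish. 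Note also that $\Tr X=0$ does not by itself give $\Tr X_0=0$ until $C=O$ and the lower-right block are controlled.

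The ingredient you are missing is already in your own equality chain: equality in \eqref{BWK2} is an instance of equality in Weyl's inequality (Lemma \ref{lem3}) for the sum $K_2=2\bigl(B^t\otimes B-A^t\otimes A\bigr)$, and this forces the top eigenspaces of $-A^t\otimes A$ and of $B^t\otimes B$ to intersect nontrivially. Writing those eigenspaces explicitly via the Kronecker structure as $\Span_{\mathbb{C}}\{\overline{u_1}\otimes u_2,\ \overline{u_2}\otimes u_1\}$ and $\Span_{\mathbb{C}}\{\overline{v_1}\otimes v_2,\ \overline{v_2}\otimes v_1\}$ (where $u_1,u_2$ and $v_1,v_2$ are the eigenvectors of $A$ and $B$ for the nonzero eigenvalues), a nontrivial common element forces $\Span_{\mathbb{C}}\{u_1,u_2\}=\Span_{\mathbb{C}}\{v_1,v_2\}$ after a short case analysis; this is what yields the common $2\times 2$ block for $A$ and $B$ simultaneously, and with it both $C=O$ and $\Tr X_0=0$. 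You flagged equality in \eqref{BWK2} but then pivoted to the rank normal form instead of extracting the eigenspace-intersection consequence; without that (or a genuinely worked-out substitute), the proof is incomplete at its main point.
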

\begin{proof}
We first prove the necessity. All the inequalities in the proof of Theorem \ref{thm1} achieve equality when $\lambda_1(T_X)=2$. Thus by the equality conditions of (\ref{BWA}) and (\ref{BWB}), we have $a_1=-a_n=:a\geq0$, $b_1=-b_n=:b\geq0$, and $a_i=b_i=0$ for $1<i<n$. Therefore,
$$\lambda(-A^t\otimes A)=\{a^2, a^2,  0, \cdots, 0, -a^2, -a^2\},$$
$$\lambda(B^t\otimes B)=\{b^2, b^2, 0, \cdots, 0, -b^2, -b^2\},$$
and there exist $U, V\in U(n)$ such that
$$U^*AU=\diag(a, -a, 0, \cdots, 0),$$
$$V^*BV=\diag(b\mathbf{i}, -b\mathbf{i}, 0, \cdots, 0).$$
Hence
$$\Tr(X)=\Tr(A)+\Tr(B)=0.$$
Because (\ref{BWK2}) achieves equality, the eigenspaces of
$\lambda_1(B^t\otimes B)$ and $\lambda_1(-A^t\otimes A)$ have a nontrivial intersection.
Let $U=(u_1, u_2, \cdots, u_n)$, $V=(v_1, v_2, \cdots, v_n)$,
we have
$$Au_1=au_1,\ Au_2=-au_2,\ Au_j=0,\ 3\leq  j\leq n;$$
$$Bv_1=b\mathbf{i}v_1,\ Bv_2=-b\mathbf{i}v_2,\ Bv_j=0,\ 3\leq  j\leq n.$$
Since $A$ is Hermitian and $B$ is skew-Hermitian, we have
$$A^t\overline{u_1}=a\overline{u_1},\ A^t\overline{u_2}=-a\overline{u_2},\
A^t\overline{u_j}=0,\ 3\leq  j\leq n;$$
$$B^t\overline{v_1}=b\mathbf{i}\overline{v_1},\ B^t\overline{v_2}=-b\mathbf{i}\overline{v_2},\
B^t\overline{v_j}=0,\ 3\leq  j\leq n.$$
By the property of Kronecker product,
the eigenspace of $\lambda_1(-A^t\otimes A)$ is
$\Span_{\mathbb{C}}\{\overline{u_1}\otimes u_2, \overline{u_2}\otimes u_1\}$;
the eigenspace of $\lambda_1(B^t\otimes B)$ is
$\Span_{\mathbb{C}}\{\overline{v_1}\otimes v_2, \overline{v_2}\otimes v_1\}.$
Therefore, there exist $k_1, k_2, l_1, l_2\in\mathbb{C}$ and $|k_1|^2+|k_2|^2=|l_1|^2+|l_2|^2\neq0$
such that
\begin{equation}\label{intersect}
k_1\overline{u_1}\otimes u_2+k_2\overline{u_2}\otimes u_1
=l_1\overline{v_1}\otimes v_2+l_2\overline{v_2}\otimes v_1.
\end{equation}
Recall that $U, V\in U(n)$, so we have
$$\overline{k_2}u_2=\overline{l_1(u_1^*v_2)}v_1+\overline{l_2(u_1^*v_1)}v_2, $$
by left multiply $I\otimes u_1^*$ and conjugate (\ref{intersect}).
Similarly, $$\overline{k_1}u_1=\overline{l_1(u_2^*v_2)}v_1+\overline{l_2(u_2^*v_1)}v_2,$$
$$\overline{l_1}v_1=\overline{k_1(v_2^*u_2)}u_1+\overline{k_2(v_2^*u_1)}u_2,$$
$$\overline{l_2}v_2=\overline{k_1(v_1^*u_2)}u_1+\overline{k_2(v_1^*u_1)}u_2.$$
There are two cases to discuss:
\begin{itemize}
  \item If $k_1k_2\neq0$,
  it is easy to see that $\Span_{\mathbb{C}}\{u_1, u_2\}=\Span_{\mathbb{C}}\{v_1, v_2\}$.
  \item If one of $k_1, k_2$ is zero, we can assume without loss of generality that $k_1\neq0$ and $k_2=0$.
  Then we claim that one of $l_1, l_2$ is zero, otherwise
  $$\overline{l_1}v_1=\overline{k_1(v_2^*u_2)}u_1,\ \overline{l_2}v_2=\overline{k_1(v_1^*u_2)}u_1$$
  will lead to a contradiction. So we can also assume without loss of generality that $l_1\neq0, l_2=0$, thus
  $$k_1\overline{u_1}\otimes u_2=l_1\overline{v_1}\otimes v_2.$$
  Since $U, V\in U(n)$, we have $|k_1/l_1|=1$ and \\
  $1=(v_1^t\otimes v_2^*)(\overline{v_1}\otimes v_2)
  =(k_1/l_1)(v_1^t\otimes v_2^*)(\overline{u_1}\otimes u_2)
  =(k_1/l_1)(v_1^t\overline{u_1}\otimes v_2^*u_2).$ \\
  The equality condition of Cauchy-Schwartz inequality implies that
  $u_1, v_1$ are linear dependent and $u_2, v_2$ are linear dependent.
\end{itemize}
In both cases, we have $\Span_{\mathbb{C}}\{u_1, u_2\}=\Span_{\mathbb{C}}\{v_1, v_2\}$. Therefore
$$U^*XU=U^*AU+U^*BU=\diag(a, -a, 0, \cdots, 0)+\diag(B_0, O_{n-2}),$$
where $B_0\in M(2, \mathbb{C})$. Setting $X_0:=\diag(a, -a)+B_0$, we have the necessity.

To prove the sufficiency, since $X_0\in M(2, \mathbb{C})$ and $\Tr X_0=0$, it follows from Lemma \ref{lem-inv} and Corollary \ref{cortrTX} that
$$\lambda_1(T_{X})=\lambda_1(T_{\diag(X_0, O_{n-2})})=\lambda_1(T_{X_0})=2-{\left|\Tr(X_0) \right| }^2=2.$$

This completes the proof.
\end{proof}

Now we give a new proof of the equality condition for the complex BW inequality.
\begin{defn} \cite{BW08}
A pair $(X, Y)$ of $M(n,\mathbb{C})$ is said to be maximal if $X\neq O$, $Y\neq O$ and $\|XY-YX\|^2=2\|X\|^2\|Y\|^2$ is satisfied.
\end{defn}

\begin{cor}\label{corlBWequality}
Let $X,Y\in M(n,\mathbb{C})$ be nonzero matrices. Then $(X, Y)$ is maximal if and only if there exists a unitary matrix $U\in U(n)$  such that
$$X=U\diag(X_0, 0)U^* \quad \textit{and}\quad Y=U\diag(Y_0, 0)U^*$$
with a maximal pair $(X_0, Y_0)$ in $M(2, \mathbb{C})$, i.e., $X_0\bot_{\mathbb{C}} Y_0$ and $\Tr X_0=\Tr Y_0=0$.
\end{cor}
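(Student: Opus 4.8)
The plan is to turn maximality into an eigenvalue statement for $T_X$, pin down $X$ via Theorem~\ref{equal}, and then extract the $2\times2$ structure from Corollary~\ref{cortrTX}. Since both the maximality condition $\|XY-YX\|^2=2\|X\|^2\|Y\|^2$ and the family of pairs described on the right-hand side are invariant under rescaling $X$ and $Y$ independently (the scalars being absorbed into $X_0,Y_0$), I would first normalize $\|X\|=\|Y\|=1$. Then by Proposition~\ref{prop1}, $(X,Y)$ is maximal iff $\langle T_XY,Y\rangle=\|[X,Y]\|^2=2$; as $T_X$ is self-dual and positive semi-definite with $\lambda_1(T_X)\le2$ (Theorem~\ref{thm1}), this holds only if $\lambda_1(T_X)=2$ and $Y$ is a unit eigenvector of $T_X$ for the eigenvalue $2$. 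By Theorem~\ref{equal}, $\lambda_1(T_X)=2$ forces $X=U\diag(X_0,O_{n-2})U^*$ with $X_0\in M(2,\mathbb{C})$ traceless and $X_0\ne O$ (because $X\ne O$); using the equivariance $(\ref{unitary})$ and replacing $(X,Y)$ by $(U^*XU,U^*YU)$, which preserves maximality, the target block form, and the eigenvector property, I may assume $U=I$, i.e.\ $X=\diag(X_0,O_{n-2})$.

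Write $Y$ in the matching $2+(n-2)$ block decomposition, with blocks $Y_0$ ($2\times2$), $Y_{12}$, $Y_{21}$, $Y_{22}$. A direct computation gives
$$T_XY=\begin{pmatrix}T_{X_0}Y_0 & X_0^*X_0\,Y_{12}\\ Y_{21}\,X_0X_0^* & O\end{pmatrix}.$$
Since $\|X\|=1$ forces $\|X_0\|=1$, the eigenvalues of $X_0^*X_0$ (equivalently of $X_0X_0^*$) are the squared singular values of $X_0$, which sum to $1$ and are hence each $<2$; therefore $T_XY=2Y$ forces $Y_{12}=Y_{21}=Y_{22}=O$. Thus $Y=\diag(Y_0,O_{n-2})$ with $T_{X_0}Y_0=2Y_0$, $\|Y_0\|=1$, and $\|[X,Y]\|=\|[X_0,Y_0]\|$, which reduces the problem to $M(2,\mathbb{C})$.

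For the $2\times2$ part, since $\Tr X_0=0$, Corollary~\ref{cortrTX} gives $\lambda(T_{X_0})=\{2,2,0,0\}$, so $\ker T_{X_0}$ is two-dimensional; it contains the linearly independent pair $\{X_0,I\}$ (each commutes with $X_0$, and $X_0\notin\mathbb{C}I$ as $X_0$ is nonzero and traceless), whence $\ker T_{X_0}=\Span_{\mathbb{C}}\{X_0,I\}$ and the eigenspace of $T_{X_0}$ for the eigenvalue $2$ equals $\Span_{\mathbb{C}}\{X_0,I\}^{\bot}$. Hence $T_{X_0}Y_0=2Y_0$ with $Y_0\ne O$ is equivalent to $\langle Y_0,I\rangle=\Tr Y_0=0$ together with $\langle Y_0,X_0\rangle=0$, i.e.\ $\Tr Y_0=0$ and $X_0\bot_{\mathbb{C}}Y_0$; this gives necessity. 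For sufficiency, given the asserted block forms with $X_0,Y_0\ne O$ traceless and $X_0\bot_{\mathbb{C}}Y_0$, rescale so that $\|X_0\|=\|Y_0\|=1$; then $Y_0\in\Span_{\mathbb{C}}\{X_0,I\}^{\bot}$ is an eigenvector of $T_{X_0}$ for the eigenvalue $2$, so $\|[X,Y]\|^2=\|[X_0,Y_0]\|^2=\langle T_{X_0}Y_0,Y_0\rangle=2=2\|X\|^2\|Y\|^2$.

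I expect the steps needing genuine care to be the block identity for $T_XY$ and the observation that the squared singular values of $X_0$ lie strictly below $2$ — this is precisely what kills the off-diagonal and $(2,2)$ blocks of $Y$ and yields the reduction to dimension two. The remaining ingredients (the homogeneity normalization and the kernel computation in $M(2,\mathbb{C})$) are routine given Theorem~\ref{equal} and Corollary~\ref{cortrTX}.
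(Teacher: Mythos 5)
Your proof is correct, and its overall strategy coincides with the paper's: normalize $\|X\|=\|Y\|=1$, translate maximality into $\lambda_1(T_X)=2$ with $Y$ a unit eigenvector for the eigenvalue $2$, invoke Theorem \ref{equal} to put $X$ in the form $U\diag(X_0,O_{n-2})U^*$ with $\Tr X_0=0$, and then show $Y$ inherits the same block structure. The differences lie in how the sub-steps are discharged, and in one place your version is more complete: the paper simply asserts that $U_1^*YU_1$, being an eigenvector of $\lambda_1(T_{X_0})=2$, must equal $\diag(Y_0,0)$, whereas your explicit block computation of $T_XY$ (with off-diagonal blocks $X_0^*X_0\,Y_{12}$ and $Y_{21}\,X_0X_0^*$) combined with the observation that the eigenvalues of $X_0^*X_0$ are at most $\|X_0\|^2=1<2$ supplies the justification. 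You also obtain $\Tr Y_0=0$ and $X_0\bot_{\mathbb{C}}Y_0$ simultaneously from the identification $\ker T_{X_0}=\Span_{\mathbb{C}}\{X_0,I\}$, so that the $2$-eigenspace is its orthogonal complement; the paper instead applies Theorem \ref{equal} a second time to $Y$ (using $\lambda_1(T_Y)=2$) to get $\Tr Y=0$, and derives $X\bot_{\mathbb{C}}Y$ from the orthogonality of the $2$- and $0$-eigenspaces of $T_X$. Finally, your sufficiency argument is self-contained via the same eigenspace characterization, where the paper defers to a direct computation in \cite{BW08}. All the steps you flag as delicate (the block identity and the bound on the singular values of $X_0$) check out.
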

\begin{proof}
Without loss of generality, we assume $\|X\|=\|Y\|=1$. If $(X, Y)$ is maximal, by definition, we have
$$\langle T_XY,Y\rangle=\langle T_YX,X\rangle=\|[X,Y]\|^2=2.$$
Thus $\lambda_1(T_X)=\lambda_1(T_Y)=2$ and hence by Theorem \ref{equal}, there exist unitary matrices $U_1,U_2\in U(n)$  such that
$$X=U_1\diag(X_0, 0)U_1^* \quad \textit{and}\quad Y=U_2\diag(\widetilde{Y_0}, 0)U_2^*$$
with $\Tr X=\Tr Y=0$. Since $Y$ is an eigenvector of the maximal eigenvalue $\lambda_1(T_X)=2$ and $X$ is an eigenvector of the zero eigenvalue of $T_X$, we know immediately $X\bot_{\mathbb{C}} Y$. Moreover, by (\ref{unitary}) and Lemma \ref{lem-inv} we know $U_1^*YU_1$ is an eigenvector of the maximal eigenvalue $\lambda_1(T_{U_1^*XU_1})=\lambda_1(T_{X_0})=2$, which implies $U_1^*YU_1=\diag(Y_0, 0)$ for some $Y_0\in M(2, \mathbb{C})$. This completes the proof of the necessity.

The sufficiency can be verified by direct computation (cf. \cite{BW08}).
\end{proof}

Let $\|X\|_{(2),2}$ be the (2, 2)-norm defined by
$$\|X\|_{(2),2}=\sqrt{{\sigma}^2_1(X)+{\sigma}^2_2(X)}.$$ For $X\in M(n,\mathbb{R})$, Lu \cite{Lu12} has already proved
$$\lambda_1(T_X)\leq2\|X\|^2_{(2),2}.$$
In fact, we can show this inequality holds also for $X\in M(n,\mathbb{C})$.
\begin{thm}\label{first new proof}
For $X\in M(n,\mathbb{C})$ with $\|X\|=1$, we have  $\lambda_1(T_X)\leq2\|X\|^2_{(2),2}\leq2.$
\end{thm}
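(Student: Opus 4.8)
The plan is to imitate the proof of Theorem \ref{thm1}. First I would use the even multiplicity (Proposition \ref{prop2}) to replace $\lambda_1(T_X)$ by $\lambda_2(T_X)$, and then Proposition \ref{VecTX} to pass to the Kronecker picture: $\lambda_1(T_X)=\lambda_2(T_X)=\lambda_2(K_X^*K_X)=\lambda_2(K_1+K_2)$, where $K_1=I\otimes X^*X+\overline{X}X^t\otimes I$ and $K_2=-(X^t\otimes X^*+\overline{X}\otimes X)$. Since the eigenvalues of $K_1$ are the numbers $\sigma_i^2(X)+\sigma_j^2(X)$ (see (\ref{eigenvK1})), we have $\lambda_2(K_1)=\sigma_1^2(X)+\sigma_2^2(X)=\|X\|^2_{(2),2}$, so the Weyl-type inequality (Lemma \ref{lem3}) applied to $K_X^*K_X=K_1+K_2$ gives
$$\lambda_1(T_X)=\lambda_2(K_1+K_2)\le\lambda_2(K_1)+\lambda_1(K_2)=\|X\|^2_{(2),2}+\lambda_1(K_2).$$
Thus the whole statement reduces to the single estimate $\lambda_1(K_2)\le\|X\|^2_{(2),2}$ (the remaining inequality $\|X\|^2_{(2),2}\le\|X\|^2=1$ being immediate).

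For this I would compute the quadratic form of $K_2$ explicitly: for $Y\in M(n,\mathbb{C})$,
$$\langle K_2\Vec Y,\Vec Y\rangle=-2\Re\Tr(X^*YXY^*)=\|XY\|^2+\|YX\|^2-\|XY+YX\|^2=\tfrac12\bigl(\|XY-YX\|^2-\|XY+YX\|^2\bigr),$$
so $\lambda_1(K_2)=\max_{\|Y\|=1}\bigl(\|XY\|^2+\|YX\|^2-\|XY+YX\|^2\bigr)$, and the goal is to bound this by $\sigma_1^2(X)+\sigma_2^2(X)$. Equivalently, writing $K_2=-(M+M^*)$ with $M=X^t\otimes X^*$ (whose singular values are the numbers $\sigma_i(X)\sigma_j(X)$), the claim is that the smallest eigenvalue of the Hermitian matrix $M+M^*$ is at least $-(\sigma_1^2(X)+\sigma_2^2(X))$. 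The normal case is painless: if $X$ is normal then $M$ is normal, so the eigenvalues of $M+M^*$ are the numbers $2\Re(\mu_i\overline{\mu_j})$ with $\mu_i$ the eigenvalues of $X$, whence $\lambda_1(K_2)=2\max_{i,j}\bigl(-\Re(\mu_i\overline{\mu_j})\bigr)\le 2\sigma_1(X)\sigma_2(X)\le\sigma_1^2(X)+\sigma_2^2(X)$, since for normal $X$ the $\sigma_k(X)$ are just the $|\mu_i|$ in decreasing order.

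The hard part is the non-normal case. The crude bound from Cauchy interlacing (realizing $\tfrac12(M+M^*)$ as a compression of a Hermitian operator whose eigenvalues are $\pm\sigma_i(X)\sigma_j(X)$) only yields $\lambda_1(K_2)\le 2\sigma_1(M)=2\sigma_1^2(X)$, which is strictly weaker than what is needed as soon as $\sigma_1(X)>\sigma_2(X)$; the extra $\sigma_1^2(X)-\sigma_2^2(X)$ has to be squeezed out of the tensor structure of $M$. Concretely I would fix a singular value decomposition $X=U\Sigma V^*$, put $\widetilde Y=U^*YU$ and $\check Y=V^*YV$, so that $\|XY\|^2=\sum_{i,j}\sigma_i^2|\check Y_{ij}|^2$, $\|YX\|^2=\sum_{i,j}\sigma_j^2|\widetilde Y_{ij}|^2$ and $\Re\Tr(X^*YXY^*)=\Re\sum_{i,j}\sigma_i\sigma_j\widetilde Y_{ij}\overline{\check Y_{ij}}$, and then carry out a careful singular-value/Cauchy--Schwarz estimate that uses the constraint $\widetilde Y=(V^*U)^*\check Y(V^*U)$ relating $\widetilde Y$ and $\check Y$. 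This constraint is precisely what forbids both $\widetilde Y$ and $\check Y$ from simultaneously concentrating on the $(1,1)$ entry with the cross term having the wrong sign, and it is where the improvement over $2\sigma_1^2(X)$ must come from; I expect this estimate to be the main technical obstacle. Once $\lambda_1(K_2)\le\|X\|^2_{(2),2}$ is established, the Weyl step above closes the argument.
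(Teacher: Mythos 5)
Your opening reduction is correct: Propositions \ref{prop2} and \ref{VecTX} give $\lambda_1(T_X)=\lambda_2(T_X)=\lambda_2(K_1+K_2)$, and Lemma \ref{lem3} yields $\lambda_2(K_1+K_2)\le\lambda_2(K_1)+\lambda_1(K_2)$ with $\lambda_2(K_1)=\sigma_1^2(X)+\sigma_2^2(X)$ by (\ref{eigenvK1}). But the entire proof then rests on the claim $\lambda_1(K_2)\le\sigma_1^2(X)+\sigma_2^2(X)$, and this you do not establish: you verify it for normal $X$ and, for the general case, describe in the conditional mood a computation that you yourself call ``the main technical obstacle.'' That is a genuine gap, not a routine verification. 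The tools available in the paper give only $\lambda_1(K_2)\le 2\sigma_1^2(X)$ --- either from Lemma \ref{lemThompson} applied to $X^t\otimes X^*$, or from the operator inequality $K_2\le K_1$ --- which loses exactly the amount $\sigma_1^2(X)-\sigma_2^2(X)$ you need to recover. Moreover your target inequality is attained with equality by non-normal matrices (for instance $\lambda_1(K_2)=\|X\|^2=\sigma_1^2+\sigma_2^2$ for every $2\times 2$ matrix with zero diagonal, and also for the non-normal $2\times2$ matrix with rows $(\epsilon,1)$ and $(0,-\epsilon)$ after normalization), so any proof must be exactly sharp; the Cauchy--Schwarz step you sketch, once the phases of the entries of $U^*YU$ and $V^*YV$ are discarded, already overshoots to $2\sigma_1^2(X)$. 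As submitted, the argument is an unfinished reduction to a statement that is at least as hard as the theorem itself.

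The paper sidesteps the scalar estimate entirely. Its mechanism, made explicit in Theorem \ref{second new proof}, is the \emph{operator} inequality $K_1-K_2=\widehat{K}_X^*\widehat{K}_X\ge 0$ with $\widehat{K}_X=I\otimes X+X^t\otimes I$, whence $K_X^*K_X=K_1+K_2\le 2K_1$ and $\lambda_i(T_X)\le 2\lambda_i(K_1)$ for every $i$; taking $i=2$ and invoking $\lambda_1(T_X)=\lambda_2(T_X)$ gives the theorem. (The proof printed under Theorem \ref{first new proof} is the dilation form of the same idea: $\tfrac12 K_X^*K_X$ is a compression of $WW^*$, and $W^*W=I\otimes XX^*+X^t\overline{X}\otimes I$ has second eigenvalue $\sigma_1^2(X)+\sigma_2^2(X)$.) The point you are missing is that one should compare $K_2$ with $K_1$ at the level of operators and only afterwards extract the \emph{second} eigenvalue of the sum, rather than try to dominate $\lambda_1(K_2)$ by the number $\lambda_2(K_1)$: the latter is a strictly stronger scalar assertion that does not follow from $K_2\le K_1$ and would require a new, sharp argument. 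Either supply a complete proof of your claim $\lambda_1(K_2)\le\|X\|^2_{(2),2}$ or switch to the operator-level comparison.
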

\begin{proof}
For $Y\in M(n,\mathbb{C})$, by Proposition \ref{VecTX} we have $$\<WW^*\widetilde{v}, \widetilde{v}\>=\<T_X  Y, Y\>,$$
where
$$W=
\begin{pmatrix}
    I\otimes X^*   &    O    \\
    -\overline{X}\otimes I &    O   \\
\end{pmatrix}_{2N\times 2N},\
\widetilde{v}=
\begin{pmatrix}
     \Vec Y\\
     \Vec Y\\
\end{pmatrix}.$$
Noticing that $$W^*W=I\otimes XX^*+X^t\overline{X}\otimes I,$$ we have by Proposition \ref{prop2} and Lemma \ref{lem2} that $$\lambda_1(T_X)=\lambda_2(T_X)\leq
2\lambda_2(WW^*)=2\lambda_2(W^*W)=2(\sigma_1^2(X)+\sigma_2^2(X))=2\|X\|^2_{(2),2}.$$
This completes the proof.
\end{proof}

Denote the upper bound in Theorem \ref{thm1} by $$C_X:=2(\max_{i,j}\{-a_ia_j\}+\max_{i,j}\{-b_ib_j\})+\|X\|^2_{(2),2}.$$
It worths remarking that
$C_X\leq2\|X\|^2_{(2),2}$ if $\rank(X)\leq2$. In general,  $C_X$ is not necessarily less than $2\|X\|^2_{(2),2}$.
However, we are able to obtain
$C_X\leq3\|X\|^2_{(2),2}$,
 since $\{{\left|a_j-\mathbf{i}b_{n-j+1}\right| }^2\}_{j=1}^n$
is majorized by $\{{\sigma}^2_j(X)\}_{j=1}^n$
due to Ando-Bhatia \cite{ATRB87}. Therefore these two upper bounds are strictly different. Combining Theorems \ref{thm1} and \ref{first new proof}, we have the following estimate.
\begin{cor}\label{corlBWNEW}
For $X\in M(n,\mathbb{C})$ with $\|X\|=1$, we have
$$\lambda_1(T_X) \leq
\min\{C_X,2\|X\|^2_{(2),2}\}\leq2.$$
\end{cor}

Furthermore, our approach can be used to estimate all eigenvalues of $T_X$ by that of $K_1$ in Proposition \ref{VecTX}.
Recall that the set of eigenvalues of $K_1$ is given in (\ref{eigenvK1}):
$$\lambda(K_1)=\{{\sigma}_i^2(X)+{\sigma}_j^2(X): 1\leq i, j\leq n\}.$$
\begin{thm}\label{second new proof}
For $X\in M(n,\mathbb{C})$ with $\|X\|=1$, we have $\lambda_i(T_X)\leq2\lambda_i(K_1)$ for all $i$.
\end{thm}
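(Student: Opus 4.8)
The plan is to imitate the structure of the proof of Theorem \ref{first new proof}, but carry it through index-by-index rather than only for the top eigenvalue. Recall from Proposition \ref{VecTX} that, after identifying $M(n,\mathbb{C})$ with $\mathbb{C}^N$ via $\Vec$, we have $T_X = K_X^*K_X$ with $K_X = I\otimes X - X^t\otimes I$, and that $\lambda(T_X)=\lambda(K_X^*K_X)$. The key auxiliary object is the $2N\times 2N$ matrix
$$W=\begin{pmatrix} I\otimes X^* & O\\ -\overline{X}\otimes I & O\end{pmatrix},$$
for which a direct computation gives $W^*W = I\otimes XX^* + X^t\overline{X}\otimes I$, while $WW^*$ is a $2\times 2$ block matrix whose nonzero blocks assemble the pieces $I\otimes X^*X$, $\overline{X}X^t\otimes I$ and the cross terms, in such a way that $\langle WW^*\widetilde v,\widetilde v\rangle = \langle T_X Y, Y\rangle$ for $\widetilde v = (\Vec Y, \Vec Y)^t$. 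Equivalently, $WW^*$ is similar (by a block permutation/rescaling) to a matrix containing $K_X^*K_X = T_X$ as a compressed block, so that each $\lambda_i(T_X)$ is dominated by the corresponding eigenvalue of $WW^*$.

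First I would make precise the relation $\lambda_i(T_X)\le \lambda_i(WW^*)$ for all $i$. This is the only place where the doubling trick is used: the vector $\widetilde v$ lives in the ``diagonal'' copy of $\mathbb{C}^N$ inside $\mathbb{C}^{2N}$, and the Courant–Fischer min–max characterization of eigenvalues, applied to the $N$-dimensional subspace of such diagonal vectors, yields $\lambda_i(T_X)\le \lambda_i(WW^*)$ for each $i=1,\dots,N$ (the factor the top case needs, $\lambda_2$ rather than $\lambda_1$, came from Proposition \ref{prop2}; see below). Second, since $WW^*$ and $W^*W$ have the same nonzero eigenvalues, $\lambda_i(WW^*)=\lambda_i(W^*W)$ for $i\le N$ (both have rank at most $N$), and by Lemma \ref{lem2} applied to $W^*W = I\otimes XX^* + X^t\overline{X}\otimes I$ we get
$$\lambda(W^*W)=\{\sigma_i^2(X)+\sigma_j^2(X): 1\le i,j\le n\}=\lambda(K_1),$$
using that $\lambda(XX^*)=\lambda(X^*X)=\lambda(X^t\overline X)=\{\sigma_k^2(X)\}$. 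Hence $\lambda_i(WW^*)=\lambda_i(K_1)$ for $i\le N$. Third, invoke Proposition \ref{prop2}: the positive eigenvalues of $T_X$ occur with even multiplicity, so $\lambda_{2i-1}(T_X)=\lambda_{2i}(T_X)$, which is what lets us replace the crude bound $\lambda_i(T_X)\le\lambda_i(K_1)$ by the factor $2$: pairing up, $\lambda_{2i}(T_X)=\lambda_{2i-1}(T_X)\le \lambda_{2i-1}(K_1)\le \lambda_{2i-1}(K_1)+\lambda_{2i}(K_1)$, and analogously one bounds $\lambda_{2i-1}(T_X)$; a short bookkeeping argument across the two indices $2i-1,2i$ then gives $\lambda_i(T_X)\le 2\lambda_i(K_1)$ for every $i$. (Alternatively, and perhaps more cleanly: $2\lambda_i(T_X)=\lambda_{2i-1}(T_X)+\lambda_{2i}(T_X)\le \lambda_{2i-1}(WW^*)+\lambda_{2i}(WW^*)$ is awkward; instead use that each eigenvalue of $T_X$ counted with even multiplicity is $\le$ the correspondingly-ordered eigenvalue of $WW^*$, and $\lambda_i(K_1)$ itself already appears twice in $\lambda(K_1)$ when $\sigma_i$ is repeated — I will choose whichever pairing makes the inequality $\lambda_i(T_X)\le 2\lambda_i(K_1)$ come out directly.)

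The main obstacle I anticipate is precisely this last combinatorial step: converting the ``shifted'' inequality $\lambda_i(T_X)\le\lambda_i(WW^*)$ together with the even-multiplicity structure into the clean statement $\lambda_i(T_X)\le 2\lambda_i(K_1)$ for all $i$ simultaneously, rather than just for $i=1$. The subtlety is that the even-multiplicity property of $T_X$ does not obviously transfer to $WW^*$ or $K_1$, so one cannot simply ``halve'' indices on both sides. The cleanest route is probably to note that for each fixed $i$, the $2i$-dimensional subspace spanned by the top $2i$ eigenvectors of $T_X$ maps, under the diagonal embedding, into $\mathbb{C}^{2N}$ where $WW^*$ acts, giving $2i\,\lambda_{2i}(T_X)\le \sum_{j=1}^{2i}\langle WW^* \widetilde v_j,\widetilde v_j\rangle \le \sum_{j=1}^{2i}\lambda_j(WW^*)=\sum_{j=1}^{2i}\lambda_j(K_1)\le 2i\,\lambda_1(K_1)$ — no, that only recovers the top bound again. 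So instead I will argue directly at the level of quadratic forms restricted to the $\lambda_i$-eigenspace of $T_X$: on that eigenspace $\langle T_XY,Y\rangle=\lambda_i(T_X)\|Y\|^2$, while $\langle T_XY,Y\rangle = \langle WW^*\widetilde v,\widetilde v\rangle\le \lambda_i(WW^*)\cdot 2\|Y\|^2$ once $\widetilde v$ is restricted to range over an appropriate $i$-codimensional-complement subspace whose $W W^*$-quadratic form is controlled by $\lambda_i$; choosing that subspace to have the right dimension forces the factor $2$ to appear together with $\lambda_i(K_1)$. Making this subspace count exact — matching dimensions so that the min–max gives $\lambda_i(K_1)$ and not $\lambda_{2i}(K_1)$ — is the one genuinely delicate point, and I would present it as a clean lemma: if $A\ge 0$ on $\mathbb C^N$ satisfies $\langle Ay,y\rangle=\langle Bv,v\rangle$ for $v$ in a fixed $N$-dimensional subspace $L\subset\mathbb C^{2N}$ with $\|v\|^2=2\|y\|^2$, and $A$ has even-multiplicity positive spectrum, then $\lambda_i(A)\le 2\lambda_i(B)$.
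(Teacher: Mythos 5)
Your opening step contains a false claim, and the rest of the proposal is an (unfinished) attempt to repair a problem that the false claim itself creates. You assert that Courant--Fischer applied to the diagonal subspace yields $\lambda_i(T_X)\le\lambda_i(WW^*)$ for all $i$. This is not true: for $\widetilde v=(\Vec Y,\Vec Y)^t$ one has $\langle WW^*\widetilde v,\widetilde v\rangle=\langle T_XY,Y\rangle$ but $\|\widetilde v\|^2=2\|Y\|^2$, so the Rayleigh quotient of $WW^*$ on the diagonal subspace is only \emph{half} that of $T_X$, and min--max gives $\lambda_i(T_X)\le 2\lambda_i(WW^*)$, not $\lambda_i(T_X)\le\lambda_i(WW^*)$. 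Concretely, for $X=\tfrac{1}{\sqrt2}\bigl(\begin{smallmatrix}0&1\\1&0\end{smallmatrix}\bigr)$ one has $\lambda_1(T_X)=2-|\Tr X|^2=2$ while $\sigma_1(X)=\sigma_2(X)=1/\sqrt2$ gives $\lambda_1(WW^*)=\lambda_1(K_1)=1$. The good news is that once the normalization is corrected, the factor $2$ you spend the entire second half of the proposal hunting for is already present: $\lambda_i(T_X)\le 2\lambda_i(WW^*)=2\lambda_i(W^*W)=2\lambda_i(K_1)$ for $i\le N$, since $WW^*$ and $W^*W$ share their nonzero spectrum and the nonzero block of $W^*W$ is $I\otimes XX^*+X^t\overline{X}\otimes I$, whose eigenvalues $\{\sigma_i^2(X)+\sigma_j^2(X)\}$ coincide with $\lambda(K_1)$. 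In particular, neither Proposition \ref{prop2} nor any pairing of indices is needed, and the "genuinely delicate" subspace-counting you defer to a final lemma evaporates. As written, though, the argument is not a proof: the stated inequality is wrong, and the proposed fixes are explicitly left unresolved (you try one route, reject it, and end by restating the desired conclusion as a lemma).

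For comparison, the paper's proof is an entirely different and much shorter argument: setting $\widehat{K}_X:=I\otimes X+X^t\otimes I$ (the vectorization of the anticommutator $Y\mapsto XY+YX$), a direct computation gives $2K_1-\Vec\circ T_X\circ(\Vec)^{-1}=K_1-K_2=\widehat{K}_X^*\widehat{K}_X\ge 0$, and Weyl's monotonicity principle immediately yields $\lambda_i(T_X)\le 2\lambda_i(K_1)$ for all $i$. Your doubling-trick route, once the normalization is fixed, does give an alternative proof consistent with Theorem \ref{first new proof}; the paper's identity $K_1-K_2=\widehat{K}_X^*\widehat{K}_X$ is cleaner and explains structurally why the factor $2$ appears (it is the positivity of the anticommutator's Gram operator).
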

\begin{proof}
Recall that $K_1=I\otimes X^*X+\overline{X}X^t\otimes I, \ K_2=-\(X^t\otimes X^*+\overline{X}\otimes X\)$, and
$$\Vec\circ T_X\circ (\Vec)^{-1}=K_1+K_2.$$
Let $\widehat{K}_X:=I\otimes X+X^t\otimes I$. Then we observe
$$2K_1-\Vec\circ T_X\circ (\Vec)^{-1}=K_1-K_2=\widehat{K}_X^*\widehat{K}_X\geq0,$$
which implies  $$\lambda_i(T_X)\leq2\lambda_i(K_1), \quad \textit{for all } i.$$
The proof is complete.
\end{proof}

In particular, Theorem \ref{second new proof} implies Theorem \ref{first new proof} since $$\lambda_1(T_X)=\lambda_2(T_X)\leq2\lambda_2(K_1)=2(\sigma_1^2+\sigma_2^2)=2\|X\|^2_{(2),2}.$$

%

\section{Equivalence of the conjectures with the LW conjecture}\label{sect-eq}
In this section, we prove the equivalence between Conjectures \ref{conj2A}-\ref{conj2C} and Conjecture \ref{conj2}, i.e., Theorem \ref{equiv conj} in the complex version.
This theorem will be divided into the following propositions.

\begin{prop} \label{prop equ2 5}
Conjecture \ref{conj2} is equivalent to Conjecture \ref{conj2A}.
\end{prop}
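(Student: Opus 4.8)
The plan is to prove the equivalence of Conjecture \ref{conj2} and Conjecture \ref{conj2A} by showing each implies the other, with the operator $T_X$ and its companion $S_X$ from Section \ref{sect-pre} serving as the dictionary between the two formulations. The key observation is that the quantity $\sum_{\alpha=2}^m\|[B,B_\alpha]\|^2$ appearing in Conjecture \ref{conj2} equals $\sum_{\alpha=2}^m\langle T_B B_\alpha, B_\alpha\rangle = \Tr\big(T_B|_{W}\big)\cdot\|B\|^2$ after normalizing $X := B/\|B\|$ and setting $W := \Span_{\mathbb C}\{B_2,\dots,B_m\}$ — provided the $B_\alpha$ are orthonormal, which assumption (i) of Conjecture \ref{conj2} grants up to scaling. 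So the inequality \eqref{ineq of conj2} is, after normalization, an estimate for $\Tr T_X|_W$ in terms of $m$ and $\max_\alpha\|B_\alpha\|^2$.

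First I would show Conjecture \ref{conj2A} implies Conjecture \ref{conj2}. Given $B, B_2,\dots,B_m$ satisfying (i) and (ii), normalize to $X = B/\|B\|$ and rescale the $B_\alpha$ to be orthonormal; the trace condition (ii), namely $\Tr(B_\alpha[B,B_\beta]) = 0$, should translate via the computation $\langle S_X B_\beta, B_\alpha\rangle = \Re\Tr([X,B_\beta]B_\alpha)$ (and its $\oi$-rotated companion, as in the proof of Proposition \ref{prop2}) into the statement that $W = \Span_{\mathbb C}\{B_2,\dots,B_m\}$ is an isotropic subspace for $S_X$ — this is precisely the hypothesis of Lemma \ref{lemma 2}. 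Hence Lemma \ref{lemma 2} gives $\Tr T_X|_W \le \sum_{i=1}^{m-1}\lambda_{2i-1}(T_X)$, and since $\lambda_{2i-1}(T_X) = \lambda_{2i}(T_X)$ (Proposition \ref{prop2}), Conjecture \ref{conj2A} with $k = m-1$ yields $\sum_{i=1}^{m-1}\lambda_{2i-1}(T_X) = \tfrac12\sum_{i=1}^{2(m-1)}\lambda_i(T_X) \le \tfrac12(2(m-1)+2) = m$. Translating back through the rescaling — where the $\max_\alpha\|B_\alpha\|^2$ term absorbs the non-uniformity of the original norms — should reproduce \eqref{ineq of conj2}. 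One has to be a little careful that the weighted version of Lemma \ref{lemma 2} (with the $B_\alpha$ of different norms) still goes through; the cleanest route is to apply the unweighted lemma to the normalized vectors and then bound the weighted sum by $\max_\alpha\|B_\alpha\|^2$ times the count plus $\sum_\alpha\|B_\alpha\|^2$, matching the right-hand side of \eqref{ineq of conj2}.

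For the converse, that Conjecture \ref{conj2} implies Conjecture \ref{conj2A}, I would fix $X$ with $\|X\|=1$ and, for each $k$, choose $W$ to be the span of the first $2k$ eigenvectors $v_1,\dots,v_{2k}$ of $T_X$ — but reorganized via the skew pairing so that $W$ decomposes into $k$ two-dimensional blocks on which $S_X$ acts as in \eqref{SX2}. The point is that with $v_{2i} = \widetilde S_X v_{2i-1}/\sqrt{\lambda_{2i-1}(T_X)}$, the subspace $W_0 := \Span_{\mathbb C}\{v_1, v_3, \dots, v_{2k-1}\}$ of the "odd" eigenvectors is isotropic for $S_X$ with $S_X(W_0) = \Span_{\mathbb C}\{v_2,v_4,\dots,v_{2k}\}$, so $W_0 \oplus S_X(W_0) = W$. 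Taking $B := X$ and $\{B_\alpha\}_{\alpha=2}^{k+1}$ to be $\{v_1,v_3,\dots,v_{2k-1}\}$ (these are mutually orthogonal, so (i) holds, and isotropy of $W_0$ gives (ii)), Conjecture \ref{conj2} bounds $\sum_{i=1}^k \|[X, v_{2i-1}]\|^2 = \sum_{i=1}^k \lambda_{2i-1}(T_X)$ by $(\max + \sum)\|X\|^2$, where both $\max$ and the sum over $m-1 = k$ unit vectors equal... well, $\max = 1$ and $\sum = k$, giving $\le k+1$, hence $\sum_{i=1}^{2k}\lambda_i(T_X) = 2\sum_{i=1}^k\lambda_{2i-1}(T_X) \le 2k+2$. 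The range $k \le [n^2/2]$ is automatic since $\dim M(n,\mathbb C) = n^2$.

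The main obstacle I anticipate is bookkeeping the two trace conditions correctly under the real-versus-complex subtlety flagged in the paper's footnote: the condition (ii) is phrased with $\Tr$, not Hermitian inner product, and one must verify that $\Tr(B_\alpha[B,B_\beta]) = 0$ for all $2\le\alpha,\beta\le m$ is genuinely equivalent to $W$ being $S_X$-isotropic in the sense $\langle S_X w_1, w_2\rangle = 0$ for all $w_1,w_2\in W$ — this requires using both $\Re\Tr$ and $\Im\Tr$ (equivalently, testing against $w_2$ and $\oi w_2$), exactly the trick used in Proposition \ref{prop2}. A secondary nuisance is handling the degenerate cases where some $\lambda_{2i-1}(T_X) = 0$ (so $S_X$ kills part of $W$) and where $W$ is not spanned by exactly the top eigenvectors — but Lemma \ref{lemma 2} is stated robustly enough ($\Tr T_X|_W \le \sum\lambda_{2i-1}$ for \emph{any} isotropic $W$) that this should cause no real trouble. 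Everything else is a matter of carefully matching the normalization constants on the right-hand sides.
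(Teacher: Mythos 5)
Your proposal takes the same route as the paper in both directions. The direction from Conjecture \ref{conj2} to Conjecture \ref{conj2A} --- taking $B=X$ and the $B_\alpha$ to be the odd-indexed eigenvectors $v_1,v_3,\dots,v_{2k-1}$, verifying (i) and (ii) via the pairing $v_{2i}=\widetilde S_Xv_{2i-1}/\sqrt{\lambda_{2i-1}(T_X)}$ from \eqref{SX2}, and doubling by Proposition \ref{prop2} --- is exactly the paper's argument. In the converse direction you also correctly identify the two key ingredients: conditions (i),(ii) make $W=\Span_{\mathbb C}\{B_2,\dots,B_m\}$ isotropic for $S_X$, and Lemma \ref{lemma 2} converts this into eigenvalue partial sums.

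The one genuine gap is the final ``translating back through the rescaling'' step. Applying Lemma \ref{lemma 2} once to the full span $W$ only yields $\sum_{\alpha}c_\alpha\leq m$ for the normalized quantities $c_\alpha=\langle T_B\frac{B_\alpha}{\|B_\alpha\|},\frac{B_\alpha}{\|B_\alpha\|}\rangle$, and multiplying through by $\max_\alpha\|B_\alpha\|^2$ gives $m\cdot\max_\alpha\|B_\alpha\|^2$, which is strictly weaker than $\max_\alpha\|B_\alpha\|^2+\sum_\alpha\|B_\alpha\|^2$ when the norms are unequal; your sentence ``bound the weighted sum by $\max$ times the count plus $\sum$'' asserts the target rather than deriving it. The missing mechanism is summation by parts: order $\|B_2\|\geq\cdots\geq\|B_m\|$, set $B_{m+1}=0$, and write
\begin{equation*}
\sum_{\alpha=2}^m\|B_\alpha\|^2c_\alpha=\sum_{\beta=2}^m\(\|B_\beta\|^2-\|B_{\beta+1}\|^2\)\sum_{\alpha=2}^\beta c_\alpha .
\end{equation*}
Each initial segment $\Span_{\mathbb C}\{B_2,\dots,B_\beta\}$ is again isotropic (a subspace of an isotropic subspace), so Lemma \ref{lemma 2} together with Conjecture \ref{conj2A} gives $\sum_{\alpha=2}^\beta c_\alpha\leq\sum_{i=1}^{\beta-1}\lambda_{2i-1}(T_X)\leq\beta$, and resumming the telescoping coefficients produces exactly $\|B_2\|^2+\sum_{\alpha=2}^m\|B_\alpha\|^2$. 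With this Abel-summation step inserted, your proof is complete and coincides with the paper's.
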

\begin{proof}
Assume Conjecture \ref{conj2} is true at first.
Setting $B=X$ and $B_\alpha$ be a unit eigenvector of $\lambda_{2\alpha-3}(T_X)$ for $\alpha=2,\cdots,m$, by the last expression of $S_X$ in (\ref{def-SX}, \ref{SX2}) we know
$S_XB_\alpha=[B,B_\alpha]^*$ is exactly an eigenvector of $\lambda_{2\alpha-2}(T_X)$. Therefore the conditions (i,ii) of  Conjecture \ref{conj2} are satisfied and thus
we have the inequality  (\ref{ineq of conj2}). Then the inequality (\ref{ineq of conj2A}) of Conjecture \ref{conj2A} for $k=m-1$ follows by Proposition \ref{prop2} and the following
$$\begin{aligned}
\sum_{i=1}^{2k}\lambda_{i}(T_X)&=2\sum_{\alpha=2}^{m}\lambda_{2\alpha-3}(T_X)=2\sum_{\alpha=2}^{m}\langle T_BB_\alpha, B_\alpha\rangle=2\sum_{\alpha=2}^{m}\|[B,B_\alpha]\|^2\\
&\leq 2\(\max \limits_{2\leq \alpha\leq m}\|B_\alpha\|^2+\sum^m_{\alpha=2}\|B_\alpha\|^2\)\|B\|^2=2m=2(k+1).
\end{aligned}$$

Now we assume Conjecture \ref{conj2A} is true.
Without loss of generality, we assume $1=\|B\|\geq\|B_2\|\geq\cdots\geq\|B_m\|>0$.
Using summation by parts, we can write
$$\begin{aligned}
\sum^m_{\alpha=2}\|\[B,B_\alpha\]\|^2
&=\sum^m_{\alpha=2}\<T_BB_\alpha, B_\alpha\>=\sum^m_{\alpha=2}\<T_B\frac{B_\alpha}{\|B_\alpha\|}, \frac{B_\alpha}{\|B_\alpha\|}\>\|B_\alpha\|^2\\
&=\sum^m_{\beta=2}\(\|B_\beta\|^2-\|B_{\beta+1}\|^2\)\sum^\beta_{\alpha=2}\<T_B\frac{B_\alpha}{\|B_\alpha\|}, \frac{B_\alpha}{\|B_\alpha\|}\>,
\end{aligned}$$
where $B_{m+1}=0$. Setting $X=B$, the conditions (i,ii) of  Conjecture \ref{conj2} show that the subspace $W:=\Span_{\mathbb{C}}\{B_\alpha\}_{\alpha=2}^m$ is isotropic about $S_X$, i.e., $S_X(W)\bot_\mathbb{C} W$. Then by the formula above, Lemma \ref{lemma 2} and the inequality (\ref{ineq of conj2A}) of  Conjecture \ref{conj2A}, we have
$$\begin{aligned}
\sum^m_{\alpha=2}\|\[B,B_\alpha\]\|^2&\leq
\sum^m_{\beta=2}\(\|B_\beta\|^2-\|B_{\beta+1}\|^2\)\sum^\beta_{\alpha=2}\lambda_{2\alpha-3}(T_X)\\
&\leq\sum^m_{\beta=2}\(\|B_\beta\|^2-\|B_{\beta+1}\|^2\)\beta\\
&=\|B_2\|^2+\sum^m_{\alpha=2}\|B_\alpha\|^2,
\end{aligned}$$
which is the inequality (\ref{ineq of conj2}) of Conjecture \ref{conj2}.

The proof is complete.
\end{proof}

\begin{prop}\label{eq-45}
Conjecture \ref{conj2A} is equivalent to Conjecture \ref{conj2B}.
\end{prop}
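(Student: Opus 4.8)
The plan is to treat the two implications separately, using three facts already at hand: the pairing $\lambda_{2i-1}(T_X)=\lambda_{2i}(T_X)$ from Proposition \ref{prop2} (which also trivially holds when $\lambda_{2i-1}(T_X)=0$, since $T_X\geq0$), the positive semi-definiteness of $T_X$ from Proposition \ref{prop1}, and the trace formula $\Tr T_X=2n-2|\Tr X|^2\leq 2n$ from Corollary \ref{cortrTX}. The first observation is that, written in decreasing order, the multiset $\{2^{2},1^{2n-4},0^{(n-1)^2+1}\}$ has exactly $n^2$ entries and its partial sums are $p_1=2$, $p_\ell=\ell+2$ for $2\leq\ell\leq 2n-2$, and $p_\ell=2n$ for $\ell\geq 2n-2$; in particular $p_{2k}=\min\{2k+2,\,2n\}\leq 2k+2$ for every $k$. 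Hence if Conjecture \ref{conj2B} holds, then $\sum_{i=1}^{2k}\lambda_i(T_X)\leq p_{2k}\leq 2k+2$ for all $k=1,\dots,[\frac{n^2}{2}]$, which is Conjecture \ref{conj2A}; this is the easy direction.

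Conversely, assume Conjecture \ref{conj2A}; I must verify $\sum_{i=1}^{\ell}\lambda_i(T_X)\leq p_\ell$ for all $\ell=1,\dots,n^2$, and I split this into three ranges whose union is $\{1,\dots,n^2\}$. (i) For $\ell=1$: the case $k=1$ of Conjecture \ref{conj2A} reads $\lambda_1(T_X)+\lambda_2(T_X)\leq 4$, and the pairing $\lambda_1(T_X)=\lambda_2(T_X)$ turns this into $\lambda_1(T_X)\leq 2=p_1$. (ii) For $2\leq\ell\leq 2n-2$: if $\ell=2k$ is even then $1\leq k\leq n-1\leq[\frac{n^2}{2}]$ and Conjecture \ref{conj2A} gives $\sum_{i=1}^{\ell}\lambda_i(T_X)\leq 2k+2=\ell+2=p_\ell$ directly; if $\ell=2k-1$ is odd then $2\leq k\leq n-1\leq[\frac{n^2}{2}]$, and using $\lambda_{2k-1}(T_X)=\lambda_{2k}(T_X)$ one has the identity
$$\sum_{i=1}^{2k-1}\lambda_i(T_X)=\tfrac12\Big(\sum_{i=1}^{2k-2}\lambda_i(T_X)+\sum_{i=1}^{2k}\lambda_i(T_X)\Big)\leq\tfrac12\big(2k+(2k+2)\big)=2k+1=\ell+2=p_\ell,$$
the two sums on the right being bounded by Conjecture \ref{conj2A} at $k-1$ and at $k$. (iii) For $2n-2\leq\ell\leq n^2$: since $T_X\geq0$, any partial sum obeys $\sum_{i=1}^{\ell}\lambda_i(T_X)\leq\Tr T_X=2n-2|\Tr X|^2\leq 2n=p_\ell$. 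Together (i)--(iii) give $\lambda(T_X)\prec\{2^{2},1^{2n-4},0^{(n-1)^2+1}\}$, i.e., Conjecture \ref{conj2B}.

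The argument is essentially bookkeeping, and I do not anticipate a genuine obstacle; the two points that deserve a moment's care are the averaging step in (ii) producing the odd-index bounds — which is precisely where Proposition \ref{prop2} is used — and the remark in (iii) that the apparently stronger inequalities $\sum_{i=1}^{2k}\lambda_i(T_X)\leq 2n$ for $k\geq n$, the very ones that make Conjecture \ref{conj2B} look stronger than Conjecture \ref{conj2A}, are automatic from $T_X\geq0$ and the trace identity, hence cost nothing. Throughout one uses the normalization $\|X\|=1$ from the hypotheses; the degenerate case $n=1$ is trivial.
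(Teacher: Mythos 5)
Your proof is correct and follows essentially the same route as the paper: the forward direction is immediate from the partial sums of the target multiset, and the converse is assembled from the even partial sums (Conjecture \ref{conj2A}), the odd partial sums via the pairing $\lambda_{2i-1}(T_X)=\lambda_{2i}(T_X)$ of Proposition \ref{prop2}, and the trace bound $\Tr T_X\leq 2n$ of Corollary \ref{cortrTX}. The only cosmetic differences are that the paper derives the odd-index bounds by contradiction rather than by your averaging identity (the same idea in different form), and quotes the BW inequality for $\lambda_1(T_X)\leq 2$ instead of extracting it from the $k=1$ case of Conjecture \ref{conj2A} as you do.
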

\begin{proof}
Obviously Conjecture \ref{conj2B} implies Conjecture \ref{conj2A} by definition. Suppose Conjecture \ref{conj2A} be true. To prove Conjecture \ref{conj2B}, we only need to prove the following four parts:
\begin{enumerate}[(i)]
\item $\lambda_1(T_X) \leq 2;$
\item $\sum_{i=1}^{2k}\lambda_{i}(T_X)\leq2k+2;$
\item $\sum_{i=1}^{2k-1}\lambda_{i}(T_X)\leq2k+1;$
\item $\sum_{i=1}^{N}\lambda_{i}(T_X)=2n-2|\Tr X|^2\leq2n$,
\end{enumerate}
where (i) and (iv) are ensured by the complex BW inequality (e.g., Theorem \ref{thm1}) and Corollary \ref{cortrTX}, and (ii) is assumed by Conjecture \ref{conj2A}, respectively. We are left to show the inequality (iii). We prove it by contradiction in the following.

 Assume that there is a positive number $m\geq2$ such that
$$\sum_{i=1}^{2m-1}\lambda_{i}(T_X)>2m+1.$$
Then
$$2m+1<\sum_{i=1}^{2m-1}\lambda_{i}(T_X)=\lambda_{2m-1}(T_X)+\sum_{i=1}^{2m-2}\lambda_{i}(T_X)\leq \lambda_{2m-1}(T_X)+2m.$$
Thus
$$\lambda_{2m}(T_X)=\lambda_{2m-1}(T_X)>1,$$
and
$$\sum_{i=1}^{2m}\lambda_{i}(T_X)=\lambda_{2m}(T_X)+\sum_{i=1}^{2m-1}\lambda_{i}(T_X)>1+2m+1=2m+2.$$
This leads to the contradiction to (ii) and completes the proof.
\end{proof}

\begin{prop} \label{prop equ5 7}
Conjecture \ref{conj2A} is equivalent to Conjecture \ref{conj2C}.
\end{prop}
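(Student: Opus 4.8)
The plan is to establish the two implications separately. The direction Conjecture \ref{conj2C} $\Rightarrow$ Conjecture \ref{conj2A} should be immediate. Given $X$ with $\|X\|=1$ and $1\le k\le[\frac{n^2}{2}]$, I would use the spectral theorem for the self-dual positive semi-definite operator $T_X$ (Proposition \ref{prop1}) to choose orthonormal eigenvectors $B_2,\dots,B_{2k+1}$ realizing the top $2k$ eigenvalues, i.e. $T_XB_\alpha=\lambda_{\alpha-1}(T_X)B_\alpha$; this is possible since $2k\le n^2=\dim M(n,\mathbb{C})$. Setting $B:=X$, the orthonormality is exactly hypothesis (i) of Conjecture \ref{conj2C}, and
$$\sum_{\alpha=2}^{2k+1}\|[B,B_\alpha]\|^2=\sum_{\alpha=2}^{2k+1}\langle T_XB_\alpha,B_\alpha\rangle=\sum_{i=1}^{2k}\lambda_i(T_X),$$
so Conjecture \ref{conj2C} applied with $\max_\alpha\|B_\alpha\|^2=1$ and $\sum_\alpha\|B_\alpha\|^2=2k$ yields $\sum_{i=1}^{2k}\lambda_i(T_X)\le 2k+2$, which is (\ref{ineq of conj2A}).

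For the converse I would first invoke Proposition \ref{eq-45}: assuming Conjecture \ref{conj2A} we may freely use Conjecture \ref{conj2B}, hence the partial-sum bounds $\sum_{i=1}^d\lambda_i(T_X)\le d+2$ for all $d\ge2$ (together with $\lambda_1(T_X)\le2$, which holds unconditionally by the BW inequality, Theorem \ref{thm1}). Given $B,B_2,\dots,B_m$ as in Conjecture \ref{conj2C}, I would normalize $\|B\|=1$, discard the zero matrices among the $B_\alpha$ (which changes neither side) and, in the remaining nontrivial case, relabel so that $\|B_2\|\ge\cdots\ge\|B_m\|>0$; by hypothesis (i) the $B_\alpha$ are then linearly independent, so $W_\beta:=\Span_{\mathbb{C}}\{B_2,\dots,B_\beta\}$ is $(\beta-1)$-dimensional. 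The Ky Fan maximum principle (the extremal case of Lemma \ref{lem3}) gives $\Tr T_B|_{W_\beta}\le\sum_{i=1}^{\beta-1}\lambda_i(T_B)$, hence $\Tr T_B|_{W_2}\le2$ and $\Tr T_B|_{W_\beta}\le\beta+1$ for $\beta\ge3$.

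Now I would run the Abel-summation identity exactly as in the proof of Proposition \ref{prop equ2 5}:
$$\sum_{\alpha=2}^{m}\|[B,B_\alpha]\|^2=\sum_{\beta=2}^{m}\bigl(\|B_\beta\|^2-\|B_{\beta+1}\|^2\bigr)\,\Tr T_B|_{W_\beta},\qquad B_{m+1}:=0.$$
Substituting the bounds $\Tr T_B|_{W_2}\le2$ and $\Tr T_B|_{W_\beta}\le\beta+1$ and telescoping, the right-hand side collapses to $2\|B_2\|^2+\|B_3\|^2+\sum_{\beta=3}^m\|B_\beta\|^2$, which, since $\|B_3\|\le\|B_2\|$, is at most $2\|B_2\|^2+\sum_{\beta=2}^m\|B_\beta\|^2=2\max_\alpha\|B_\alpha\|^2+\sum_\alpha\|B_\alpha\|^2$. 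Rescaling $B$ back restores the factor $\|B\|^2$ and gives Conjecture \ref{conj2C}.

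I do not expect a genuine obstacle: both halves are short, and the tools (spectral theorem, Ky Fan, Abel summation) are already in place. The only delicate points are bookkeeping ones — in the first direction one must check that enough orthonormal eigenvectors are available, which is ensured by $2k\le n^2$, and in the second the boundary index $\beta=2$ must be handled by the unconditional estimate $\lambda_1(T_B)\le2$ rather than by the conjecture, while the odd-index case $\sum_{i=1}^{2k-1}\lambda_i(T_X)\le2k+1$ of the partial-sum bound is needed and is exactly part (iii) appearing in the proof of Proposition \ref{eq-45}. The telescoping computation itself is routine and parallels the one in Proposition \ref{prop equ2 5}.
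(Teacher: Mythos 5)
Your proposal is correct and follows essentially the same route as the paper: the forward direction tests Conjecture \ref{conj2C} on orthonormal eigenvectors of the top $2k$ eigenvalues of $T_X$, and the converse combines Proposition \ref{eq-45} (to get the partial-sum bounds $\sum_{i=1}^{d}\lambda_i\le d+2$) with the same Abel-summation identity. The only cosmetic difference is that you use the sharper bound $\lambda_1(T_B)\le 2$ at the boundary index $\beta=2$, whereas the paper applies the uniform bound $\beta+1$ throughout; both telescoping computations land at $2\|B_2\|^2+\sum_{\alpha=2}^m\|B_\alpha\|^2$.
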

\begin{proof}
The proof is similar to that of  Proposition \ref{prop equ2 5}, without using Lemma \ref{lemma 2} now since we have no condition (ii) of Conjecture \ref{conj2}.
For the sake of clearness, we repeat it as follows.

Assume Conjecture \ref{conj2C} is true.
Setting $B=X$ and $B_\alpha$ be a unit eigenvector of $\lambda_{\alpha-1}(T_X)$ for $\alpha=2,\cdots,m$,  we know
$B_\alpha$'s are $\mathbb{C}$-orthogonal and  therefore we have the inequality of  Conjecture \ref{conj2C}. Then the inequality (\ref{ineq of conj2A}) of Conjecture \ref{conj2A} for $m=2k+1$ follows by
$$\begin{aligned}
\sum_{i=1}^{m-1}\lambda_{i}(T_X)&=\sum_{\alpha=2}^{m}\lambda_{\alpha-1}(T_X)=\sum_{\alpha=2}^{m}\langle T_BB_\alpha, B_\alpha\rangle=\sum_{\alpha=2}^{m}\|[B,B_\alpha]\|^2\\
&\leq \(2\max \limits_{2\leq \alpha\leq m}\|B_\alpha\|^2+\sum^m_{\alpha=2}\|B_\alpha\|^2\)\|B\|^2=m+1.
\end{aligned}$$

Now we assume Conjecture \ref{conj2A} is true and hence Conjecture \ref{conj2B} is true by Proposition \ref{eq-45}. In particular, we have
$$\sum_{i=1}^m\lambda_i(T_X)\leq m+2, \quad \textit{for any } m.$$
Without loss of generality, we assume $1=\|B\|\geq\|B_2\|\geq\cdots\geq\|B_m\|>0$ and set $B_{m+1}=0$.
Then using summation by parts, we have
$$\begin{aligned}
\sum^m_{\alpha=2}\|\[B,B_\alpha\]\|^2
&=\sum^m_{\alpha=2}\<T_BB_\alpha, B_\alpha\>=\sum^m_{\alpha=2}\<T_B\frac{B_\alpha}{\|B_\alpha\|}, \frac{B_\alpha}{\|B_\alpha\|}\>\|B_\alpha\|^2\\
&=\sum^m_{\beta=2}\(\|B_\beta\|^2-\|B_{\beta+1}\|^2\)\sum^\beta_{\alpha=2}\<T_B\frac{B_\alpha}{\|B_\alpha\|}, \frac{B_\alpha}{\|B_\alpha\|}\>,\\
&\leq \sum^m_{\beta=2}\(\|B_\beta\|^2-\|B_{\beta+1}\|^2\)\sum^{\beta-1}_{\alpha=1}\lambda_{\alpha}(T_X)\\
&\leq\sum^m_{\beta=2}\(\|B_\beta\|^2-\|B_{\beta+1}\|^2\)(\beta+1)\\
&=2\|B_2\|^2+\sum^m_{\alpha=2}\|B_\alpha\|^2,
\end{aligned}$$
which is the inequality of Conjecture \ref{conj2C}.

The proof is complete.
\end{proof}

\begin{prop}  \cite{LW16}
The LW Conjecture \ref{conj2} implies Conjectures \ref{conj1} and \ref{conj3}.
\end{prop}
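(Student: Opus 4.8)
The plan is to treat the two implications separately, as they differ greatly in difficulty: Conjecture~\ref{conj3} drops out at once, whereas Conjecture~\ref{conj1} uses the trace hypothesis in an essential way and needs a careful bookkeeping argument.

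For Conjecture~\ref{conj3} I would invoke the equivalence of Conjecture~\ref{conj2} with Conjecture~\ref{conj2A} (Proposition~\ref{prop equ2 5}) and apply the latter with $k=2$; this gives $\lambda_1(T_X)+\lambda_2(T_X)+\lambda_3(T_X)+\lambda_4(T_X)\le 6$ as soon as $n\ge 2$, so that $k=2$ is admissible ($k\le[n^2/2]$), the case $n=1$ being vacuous since then $T_X=0$. By Proposition~\ref{prop2} the positive eigenvalues of $T_X$ have even multiplicity, hence $\lambda_1(T_X)=\lambda_2(T_X)$ and $\lambda_3(T_X)=\lambda_4(T_X)$, so $2\bigl(\lambda_1(T_X)+\lambda_3(T_X)\bigr)\le 6$, which is Conjecture~\ref{conj3}. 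Alternatively one may quote Conjecture~\ref{conj2} itself: put $B=X$ and take $B_2,B_3$ to be unit eigenvectors of $\lambda_1(T_X)$ and $\lambda_3(T_X)$, chosen — as in the construction of $S_X$ in (\ref{def-SX}), (\ref{SX2}) — so that $S_XB_2$ and $S_XB_3$ are again among the chosen eigenvectors; then $\Span\{B_2,B_3\}$ is orthogonal and isotropic for $S_X$, so hypotheses (i) and (ii) hold, and $\lambda_1(T_X)+\lambda_3(T_X)=\|[B,B_2]\|^2+\|[B,B_3]\|^2\le(1+1+1)\|X\|^2=3$.

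For Conjecture~\ref{conj1} the first step is a reduction to the case in which $B_1,\dots,B_m$ are mutually orthogonal. Replacing $(B_1,\dots,B_m)$ by $\bigl(\sum_\beta O_{1\beta}B_\beta,\dots,\sum_\beta O_{m\beta}B_\beta\bigr)$ for $O$ in the orthogonal group $O(m)$ leaves both $\sum_{\alpha,\beta}\|[B_\alpha,B_\beta]\|^2$ and $\sum_\alpha\|B_\alpha\|^2$ unchanged, and it also preserves the hypothesis of Conjecture~\ref{conj1}, which says exactly that the trace form $(B_\alpha,B_\beta,B_\gamma)\mapsto\Tr(B_\alpha B_\beta B_\gamma)$ — already invariant under cyclic permutations — is totally symmetric. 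Choosing $O$ to diagonalize the Gram matrix $\bigl(\langle B_\alpha,B_\beta\rangle\bigr)$ we may therefore assume $\langle B_\alpha,B_\beta\rangle=0$ for $\alpha\ne\beta$ and $\|B_1\|\ge\cdots\ge\|B_m\|$. Under this reduction, for any index $\gamma$ and any subset $J\subseteq\{1,\dots,m\}\setminus\{\gamma\}$, the family made of $B:=B_\gamma$ and $\{B_\alpha\}_{\alpha\in J}$ satisfies both hypotheses of Conjecture~\ref{conj2}: condition (i) by the orthogonality just arranged, and condition (ii), $\Tr\bigl(B_\alpha[B_\gamma,B_\beta]\bigr)=0$ for $\alpha,\beta\in J$, as a special case of the hypothesis of Conjecture~\ref{conj1}. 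So Conjecture~\ref{conj2} supplies, for each such choice,
\begin{equation*}
\sum_{\alpha\in J}\|[B_\gamma,B_\alpha]\|^2\le\Bigl(\max_{\alpha\in J}\|B_\alpha\|^2+\sum_{\alpha\in J}\|B_\alpha\|^2\Bigr)\|B_\gamma\|^2 .
\end{equation*}
Charging each commutator (using $\|[B_\alpha,B_\beta]\|=\|[B_\beta,B_\alpha]\|$) to a suitable center and summing these estimates reduces Conjecture~\ref{conj1} to a scalar inequality in $x_\alpha:=\|B_\alpha\|^2\ge 0$ with $x_1\ge\cdots\ge x_m$.

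The main obstacle is precisely this scalar inequality. The crudest accounting — charge every ordered pair $(\gamma,\alpha)$ to the center $\gamma$ with $J=\{1,\dots,m\}\setminus\{\gamma\}$ — yields $\sum_{\alpha,\beta}\|[B_\alpha,B_\beta]\|^2\le\bigl(\sum_\gamma x_\gamma\bigr)^2-\sum_\gamma x_\gamma^2+x_1x_2+x_1\bigl(\sum_\gamma x_\gamma-x_1\bigr)$, which is sharp when all $x_\gamma$ coincide but exceeds $\bigl(\sum_\gamma x_\gamma\bigr)^2$ for other distributions, for instance $x=(2,1,1,1)$; on the other hand, peeling $B_1$ off and inducting on $m$ would require $2x_1x_2\le x_1^2$ and therefore fails when the top norms are comparable. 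The remedy is to organize the applications of Conjecture~\ref{conj2} by grouping the $B_\alpha$ into blocks on which $\|B_\alpha\|$ is roughly constant and processing the blocks successively — equivalently, to perform a summation by parts over the sorted sequence $\|B_1\|^2\ge\cdots\ge\|B_m\|^2$, in the spirit of the proofs of Propositions~\ref{prop equ2 5} and \ref{prop equ5 7}, with the partial sums controlled through Conjecture~\ref{conj2A} (or, interchangeably, through the equivalent Conjectures~\ref{conj2B}, \ref{conj2C}). Arranging the combinatorics so that the resulting bound closes exactly at $\bigl(\sum_\gamma x_\gamma\bigr)^2$, with no residual lower-order term, is the one delicate point; all the rest is the routine verification above.
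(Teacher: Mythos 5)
Your derivation of Conjecture \ref{conj3} is correct and matches the paper: apply Conjecture \ref{conj2A} with $k=2$ (via Proposition \ref{prop equ2 5}) and use the even multiplicities from Proposition \ref{prop2}.

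The implication to Conjecture \ref{conj1}, however, has a genuine gap, and it sits exactly where you flag "the one delicate point." After the (correct) reduction to mutually orthogonal $B_\alpha$'s, your plan is to apply Conjecture \ref{conj2} once per center $\gamma$ and sum; you yourself compute that the crude version overshoots $\bigl(\sum_\gamma x_\gamma\bigr)^2$ (e.g.\ at $x=(2,1,1,1)$) and that peeling off $B_1$ inductively would need $2x_1x_2\le x_1^2$. The proposed remedy --- grouping into blocks of comparable norm and summing by parts as in Propositions \ref{prop equ2 5} and \ref{prop equ5 7} --- is never carried out, and there is no indication that any fixed charging scheme closes at $\bigl(\sum_\gamma x_\gamma\bigr)^2$ with no residual term; the obstruction you exhibited is structural, not an artifact of sloppy bookkeeping. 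So the essential step of the implication is missing.

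The paper avoids this entirely by a variational argument. Let $a>0$ be the largest constant with $\bigl(\sum_\alpha\|A_\alpha\|^2\bigr)^2\ge 2a\sum_{\alpha<\beta}\|[A_\alpha,A_\beta]\|^2$ for all admissible families. By the invariances you already noted (unitary conjugation and rotation by $P\in O(m)$, which preserves total symmetry of the trace form), an extremal family can be normalized to be mutually orthogonal with $\|A_1\|$ largest. Writing the equality case as a quadratic in $t^2=\|A_1\|^2$ and using that this quadratic is nonnegative for every $t^2\ge 0$ (since rescaling $A_1$ preserves the hypotheses) while vanishing at $t^2=\|A_1\|^2$, one concludes that $t^2=\|A_1\|^2$ is a double root, hence
$\|A_1\|^2=a\sum_{\alpha>1}\|[A_1/\|A_1\|,A_\alpha]\|^2-\sum_{\alpha>1}\|A_\alpha\|^2$.
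A single application of Conjecture \ref{conj2} with $B=A_1/\|A_1\|$ then bounds $\sum_{\alpha>1}\|[B,A_\alpha]\|^2$ by $\|A_2\|^2+\sum_{\alpha>1}\|A_\alpha\|^2\le\sum_\alpha\|A_\alpha\|^2$, which forces $a\ge 1$. It is this extremality (the double-root identity), not any summation over centers, that makes the constant close exactly; you would need to import this idea, or something equivalent, to complete your argument.
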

\begin{proof}
Conjecture \ref{conj3} is trivially implied by Conjecture \ref{conj2A} and thus by Conjecture \ref{conj2}.

As for Conjecture \ref{conj1}, for the sake of completeness, we copy the proof of the real version from \cite{LW16} for our complex version now.

We first observe that the inequality (\ref{csymDDVV}) is invariant under the transformations
$$\begin{aligned}
M(n,\mathbb{C}) &\longrightarrow  M(n,\mathbb{C}),\\
A_\alpha &\longmapsto  QA_\alpha Q^*, \\
A_\alpha &\longmapsto  \sum_{\beta=1}^mp_{\alpha \beta}A_\beta,
\end{aligned}
$$
for all unitary $n\times n$ matrices $Q$ and $m\times m$ matrices $P=(p_{\alpha \beta})$.

Now, let $a>0$ be the largest positive real number such that
$$
(\sum_{\alpha=1}^m||A_\alpha||^2)^2\geq 2a(\sum_{\alpha<\beta}||[A_\alpha,A_\beta]||^2)
$$
for all matrices $A_\alpha$'s satisfying the condition of Conjecture \ref{conj1}.

Since $a$ is maximal, by the invariance we can find matrices $A_1,\cdots,A_m$ such that
\begin{equation}\label{conj-1}
(\sum_{\alpha=1}^m||A_\alpha||^2)^2= 2a(\sum_{\alpha<\beta}||[A_\alpha,A_\beta]||^2)
\end{equation}
with the following additional properties:
\begin{enumerate}
\item $\Tr A_\alpha A_\beta^*=0$ for any $\alpha\neq \beta$;
\item $\Tr A_\alpha\[A_\gamma, A_\beta\] =0$ for any $1\leq \alpha,\beta,\gamma\leq m$;
\item $0\neq ||A_1||\geq ||A_2||\geq\cdots \geq ||A_m||$.
\end{enumerate}

We let $t^2=||A_1||^2$ and let $A'=A_1/|t|$. Then ~\eqref{conj-1} becomes a quadratic expression in terms of $t^2$:
\begin{align*}&
t^4-2t^2\Big(a\sum_{1<\alpha}||[A',A_\alpha]||^2-\sum_{1<\alpha}||A_\alpha||^2\Big)
+\Big(\sum_{\alpha=2}^m||A_\alpha||^2\Big)^2\\&\qquad- 2a\Big(\sum_{1<\alpha<\beta}||[A_\alpha,A_\beta]||^2\Big)=0.
\end{align*}
Since the left-hand side of the above is nonnegative for all $t^2\geq0$ and is zero for $t^2=||A_1||^2$, we have
$$
a\sum_{1<\alpha}||[A',A_\alpha]||^2-\sum_{1<\alpha}||A_\alpha||^2>0,
$$
and
$$
||A_1||^2=a\sum_{1<\alpha}||[A',A_\alpha]||^2-\sum_{1<\alpha}||A_\alpha||^2.
$$
By Conjecture \ref{conj2}, we have
$$
\sum_{1<\alpha}||[A',A_\alpha]||^2\leq \sum_{\alpha=2}^m||A_\alpha||^2+||A_2||^2\leq \sum_{\alpha=1}^m||A_\alpha||^2,
$$
which proves that $a\geq 1$ and this completes the proof.
\end{proof}


\section{Partial results on the complex LW Conjecture}\label{sect-LW}
In this section, we prove the complex LW Conjecture separately for those special cases (Theorem \ref{thm-special-cases}), and for general cases, we give some non-sharp upper bounds for the inequalities of Conjectures \ref{conj3} and \ref{conj2D} (Theorems \ref{thm-conj3-weak} and \ref{thm-conj7-weak}).

Firstly we prove the complex version of Conjecture \ref{conj3} for the first special case of Theorem \ref{thm-special-cases}. We remind that Conjecture \ref{conj3} is also the first step of the complex LW Conjecture \ref{conj2D} after the solution of the BW inequality (i.e., $\lambda_1(T_X)\leq2$).
\begin{thm}\label{normallemma}
Conjecture \ref{conj3} is true when $X$ is a normal matrix. 
\end{thm}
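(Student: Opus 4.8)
The plan is to reduce everything to eigenvalues of a commuting family of matrices. Since $X$ is normal, by Lemma~\ref{lem-inv} we may assume $X=\diag(x_1,\dots,x_n)$ is diagonal with $\sum_j|x_j|^2=1$. Then the action of $T_X$ on the standard matrix units $E_{jk}$ is easy to compute: $[X,E_{jk}]=(x_j-x_k)E_{jk}$ and hence $[X^*,[X,E_{jk}]]=|x_j-x_k|^2E_{jk}$. So the $E_{jk}$ form an orthonormal eigenbasis of $T_X$ with eigenvalues $d_{jk}:=|x_j-x_k|^2$, and in particular $\lambda(T_X)$ is just the decreasing rearrangement of the multiset $\{d_{jk}:1\le j,k\le n\}$ (with the $n$ diagonal entries $d_{jj}=0$). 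This is consistent with Proposition~\ref{prop2} because $d_{jk}=d_{kj}$, so every positive eigenvalue indeed occurs with even multiplicity.

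The goal $\lambda_1(T_X)+\lambda_3(T_X)\le 3$ then becomes: for any $n$ complex numbers with $\sum|x_j|^2=1$, the sum of the two largest \emph{distinct-index} values of $|x_j-x_k|^2$ — more precisely the largest value plus the third-largest entry of the sorted multiset, which by the even-multiplicity remark equals the second-largest \emph{value} among the $d_{jk}$'s unless the maximum is attained by two disjoint index pairs — is at most $3$. First I would set $d:=\max_{j\ne k}|x_j-x_k|^2=\lambda_1(T_X)$, achieved say by the pair $(p,q)$. By the triangle/parallelogram-type bound, $|x_p-x_q|^2\le 2(|x_p|^2+|x_q|^2)\le 2$, recovering $\lambda_1\le 2$. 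For $\lambda_3$ I would split into cases: either $\lambda_3$ comes again from the pair $\{p,q\}$ (so $\lambda_3=\lambda_1=d$ and we must show $2d\le 3$, i.e. $d\le 3/2$), or $\lambda_3=\max\{|x_j-x_k|^2:\{j,k\}\ne\{p,q\}\}$ and at least one index, say a new index $r$, enters. In the first case the competing pair $\{p,q\}$ carries multiplicity $\ge 4$ among the $d_{jk}$, which is impossible with only two indices — so actually $\lambda_3$ must involve a third index, and one reduces to analyzing three points $x_p,x_q,x_r$ on the plane with $|x_p|^2+|x_q|^2+|x_r|^2\le 1$.

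So the crux is an elementary but slightly delicate planar optimization: for three complex numbers $z_1,z_2,z_3$ with $|z_1|^2+|z_2|^2+|z_3|^2\le 1$, maximize $|z_1-z_2|^2+|z_1-z_3|^2$ (the worst configuration has $z_1$ opposite to $z_2,z_3$, with $z_2=z_3$ to make the ``third'' value as large as possible). I would optimize by Lagrange multipliers or by directly writing $z_j=t_j u_j$ with unit vectors $u_j$ and reducing to a real quadratic form; one expects the extremum at $z_2=z_3=-\tfrac{z_1}{\sqrt2}\cdot(\text{scalar})$ type configurations, giving the bound $3$ with equality for something like $x=\tfrac12\diag(\sqrt2,-1/\sqrt2,-1/\sqrt2,0,\dots)$ up to normalization — indeed that is the configuration forced in Conjecture~\ref{conj2B}'s equality multiset $\{2^2,1^{2n-4},\dots\}$, which strongly suggests the sharp value is exactly $2+1=3$. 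The main obstacle is precisely pinning down this three-point maximum cleanly and checking the edge cases (two disjoint maximizing pairs; the case $\lambda_3$ supported on an index pair sharing one index with $\{p,q\}$), since the ``sort the multiset'' bookkeeping around repeated eigenvalues has to be handled carefully to be sure one is bounding $\lambda_1+\lambda_3$ and not accidentally $\lambda_1+\lambda_2=2\lambda_1$.
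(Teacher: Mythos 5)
Your proposal follows essentially the same route as the paper's proof: diagonalize $X$ by a unitary, read off $\lambda(T_X)=\{|x_i-x_j|^2:1\le i,j\le n\}$ from the eigenbasis $\{E_{ij}\}$, and split according to whether the index pairs realizing $\lambda_1$ and $\lambda_3$ are disjoint (giving the bound $2$) or share one index (giving the bound $3$). The ``delicate planar optimization'' you defer is resolved in the paper by a purely elementary chain, namely
$|x_a-x_b|^2+|x_a-x_d|^2\le\bigl(|x_a|+|x_b|\bigr)^2+\bigl(|x_a|+|x_d|\bigr)^2\le 3|x_a|^2+\bigl(|x_b|+|x_d|\bigr)^2+|x_b|^2+|x_d|^2\le 3\bigl(|x_a|^2+|x_b|^2+|x_d|^2\bigr)\le 3$,
using only $2|x_a|(|x_b|+|x_d|)\le|x_a|^2+(|x_b|+|x_d|)^2$ and $(|x_b|+|x_d|)^2\le 2(|x_b|^2+|x_d|^2)$ --- so no Lagrange multipliers are needed, and the equality configuration you guessed (after normalization: $|x_a|=\sqrt{6}/3$, $|x_b|=|x_d|=\sqrt{6}/6$ with the three points collinear through the origin) is exactly the sharp one recorded in the paper.
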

\begin{proof}
Since $X$ is a normal matrix, there exists a unitary matrix $U$
such that $$U^*XU=\diag(x_1, \cdots, x_n), \quad \textit{for some } x_1, \cdots, x_n\in \mathbb{C} \textit{ with } \sum_i|x_i|^2=1.$$
Direct calculations show that for any $1\leq i, j\leq n$,
$$T_{U^*XU}(E_{ij}) = |x_i-x_j|^2E_{ij},$$
where $E_{ij}\in M(n,\mathbb{C})$ is the standard basis matrix with the $(i,j)$-element being $1$ and the others being $0$.
Then by the identity (\ref{unitary}):
$$T_{U^*XU}(U^*YU) = U^*T_X(Y)U,$$
we have
$$T_X(UE_{ij}U^*) = UT_{U^*XU}(E_{ij})U^* = |x_i-x_j|^2UE_{ij}U^*. $$
It follows that
$$\lambda(T_X)=\left\{|x_i-x_j|^2: 1\leq i, j\leq n\right\}=\{\lambda_1\geq\cdots\geq \lambda_{n^2}\}.$$
Suppose $\lambda_1=\lambda_2=|x_a-x_b|^2$, $\lambda_3=\lambda_4=|x_c-x_d|^2$,
where $1\leq a, b, c, d\leq n$.
There are two cases need to be discussed:
\begin{itemize}
  \item If $a, b, c, d$ are four different integers, then
  $$\lambda_1+\lambda_3=|x_a-x_b|^2+|x_c-x_d|^2\leq 2(|x_a|^2+|x_b|^2+|x_c|^2+|x_d|^2)\leq 2.$$
  \item If one of $a, b$ is equal to one of $c, d$, we can assume $a=c$, $b\neq d$.
  Then $$\begin{aligned}
  \lambda_1+\lambda_3&=|x_a-x_b|^2+|x_a-x_d|^2\\
  &=|x_a|^2-x_a\overline{x_b}-\overline{x_a}x_b+|x_b|^2+|x_a|^2-x_a\overline{x_d}-\overline{x_a}x_d+|x_d|^2\\
  &\leq 2|x_a|^2+2|x_a|\(|x_b|+|x_d|\)+|x_b|^2+|x_d|^2\\
  &\leq 3|x_a|^2+\(|x_b|+|x_d|\)^2+|x_b|^2+|x_d|^2\\
  &\leq 3(|x_a|^2+|x_b|^2+|x_d|^2)\leq 3.
  \end{aligned}$$
  The equality holds if and only if $|x_a|=\frac{\sqrt{6}}3$, $|x_b|=|x_d|=\frac{\sqrt{6}}6$, other $x_e=0$
  and $x_a, x_b, x_d$ are co-linear in the complex plane.
\end{itemize}
The proof is complete.
\end{proof}

For more general cases, we need Lu's lemma in the complex version:
%
\begin{lem}\label{Lucorcom1}\cite{Lu11}
Suppose $\eta_1,\cdots,\eta_n$ are complex numbers and
$$
\eta_1+\cdots+\eta_n=0,\quad
|\eta_1|^2+\cdots+|\eta_n|^2=1.
$$
Let $r_{ij}\geq 0$ be nonnegative numbers for $i<j$. Then we have
\begin{equation}
\sum_{i<j}|\eta_i-\eta_j|^2 r_{ij}\leq\sum_{i<j}r_{ij}+\max\limits_{i<j} (r_{ij}).
\end{equation}
\end{lem}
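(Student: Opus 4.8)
The plan is to reduce the weighted inequality $\sum_{i<j}|\eta_i-\eta_j|^2 r_{ij}\le \sum_{i<j}r_{ij}+\max_{i<j}(r_{ij})$ to a statement about the quadratic form $Q(\eta):=\sum_{i<j}|\eta_i-\eta_j|^2 r_{ij}$ on the hyperplane $\{\sum_i\eta_i=0\}$ of the unit sphere in $\mathbb{C}^n$. The key observation is that $Q$ is the Hermitian form associated to a (real) weighted graph Laplacian $L=(L_{ij})$, where $L_{ii}=\sum_{j\ne i}r_{ij}$ and $L_{ij}=-r_{ij}$ for $i\ne j$; indeed $\sum_{i<j}|\eta_i-\eta_j|^2 r_{ij}=\eta^* L\eta$. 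Since $L$ annihilates the all-ones vector $\mathbf{1}$, the constraint $\sum_i\eta_i=0$ is exactly the requirement that $\eta\perp\mathbf{1}$, i.e. $\eta$ lies in the range of $L$; hence the maximum of $Q$ over the constraint set equals the largest eigenvalue $\mu_{\max}(L)$ of $L$ (the remaining eigenvalues besides the zero one coming from $\mathbf{1}$). So the whole lemma is equivalent to the purely spectral estimate $\mu_{\max}(L)\le \sum_{i<j}r_{ij}+\max_{i<j}(r_{ij})$, with no normalization of the $\eta_i$ needed beyond $|\eta|=1$.

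To bound $\mu_{\max}(L)$ I would use the trace together with the even-multiplicity / pairing philosophy that pervades this paper — but here it is cleaner to argue directly. First, $\Tr L=\sum_i L_{ii}=2\sum_{i<j}r_{ij}$. Next I claim the \emph{second-largest} eigenvalue satisfies $\mu_2(L)\le \max_{i<j}(r_{ij})=:R$; granting this, $\mu_{\max}(L)=\Tr L-\sum_{k\ge 2}\mu_k(L)\le \Tr L-\mu_2(L)\cdot 0$ is the wrong direction, so instead I use $\mu_{\max}(L)+\mu_2(L)\le \Tr L$ only when $n\ge 3$... — more robustly, write $\mu_{\max}(L)=\Tr L-\sum_{k=2}^{n}\mu_k(L)\le \Tr L$ is too weak, so the real engine must be a Weyl-type or interlacing bound. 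The cleanest route: decompose $L=\sum_{i<j}r_{ij}L^{(ij)}$ where $L^{(ij)}$ is the elementary Laplacian of a single edge, a rank-one positive semidefinite matrix with eigenvalues $\{2,0,\dots,0\}$ and eigenvector $e_i-e_j$. Then $\mu_{\max}(L)\le 2\max_{i<j}r_{ij}+\big(\text{contribution of the rest}\big)$ — precisely, splitting off the heaviest edge $(a,b)$ with weight $R$, write $L=R\,L^{(ab)}+L'$ where $L'$ is the Laplacian of the remaining weights; by Lemma \ref{lem3}, $\mu_{\max}(L)\le \mu_{\max}(R L^{(ab)})+\mu_{\max}(L')=2R+\mu_{\max}(L')$, which is not quite sharp. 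The sharp bound instead follows from: $\mu_{\max}(L)\le \max_i\big(L_{ii}+\sum_{j\ne i}|L_{ij}|\big)=\max_i 2\sum_{j\ne i}r_{ij}$, a Gershgorin estimate — still not obviously $\le \sum_{i<j}r_{ij}+R$ in general.

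So the genuinely delicate step — and the main obstacle — is upgrading these soft spectral bounds to the sharp constant $\sum_{i<j}r_{ij}+\max_{i<j}r_{ij}$. I expect the intended argument (following \cite{Lu11}) proceeds by a normalization/extremal reduction rather than raw spectral inequalities: assume the maximizing $\eta$ is given, and by homogeneity in the $r_{ij}$ and a compactness argument reduce to the case where the active weights form a simple configuration (e.g. a star or a single edge plus the constraint), then verify the inequality by an elementary computation using $\sum_i\eta_i=0$ and Cauchy–Schwarz — exactly in the spirit of the two-case analysis in the proof of Theorem \ref{normallemma}, where $\lambda_1+\lambda_3=|x_a-x_b|^2+|x_c-x_d|^2$ was handled by splitting on whether the index sets $\{a,b\}$ and $\{c,d\}$ overlap. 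Concretely I would: (1) fix a maximizer $\eta$; (2) use the Lagrange condition $L\eta=\mu_{\max}\eta$ to show that the "effective" interaction is governed by at most the edges incident to the heaviest weight; (3) bound the cross terms $-2\Re(\eta_i\bar\eta_j)r_{ij}$ by $(|\eta_i|^2+|\eta_j|^2)r_{ij}$, collect $\sum_{i<j}(|\eta_i|^2+|\eta_j|^2)r_{ij}=\sum_i|\eta_i|^2\sum_{j\ne i}r_{ij}\le \sum_i|\eta_i|^2\big(\sum_{k<l}r_{kl}+\max r_{kl}\big)$ after isolating the row of maximal weight-sum, using $\sum_i|\eta_i|^2=1$; and (4) check the equality case pins down $\eta$ to be supported on two coordinates with $\eta_i=-\eta_j$, consistent with the max being attained at a single edge. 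The one subtlety to watch is that $\sum_{j\ne i}r_{ij}$ can exceed $\sum_{k<l}r_{kl}+\max r_{kl}$ for a fixed $i$ only if... — no, in fact $\sum_{j\ne i}r_{ij}\le \sum_{k<l}r_{kl}$ always since each $r_{ij}$ with $j\ne i$ appears once on the right, so step (3) already gives $\sum_i|\eta_i|^2\sum_{j\ne i}r_{ij}\le \sum_{k<l}r_{kl}$; the extra $\max r_{kl}$ term is the slack that absorbs the failure of the bound $-2\Re(\eta_i\bar\eta_j)\le |\eta_i|^2+|\eta_j|^2$ to be tight simultaneously on all edges, and recovering it requires the $\sum_i\eta_i=0$ constraint essentially. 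That last point is where the real work lies.
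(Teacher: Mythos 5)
There is a genuine gap: what you have written is a survey of candidate strategies, not a proof, and you say so yourself (``that last point is where the real work lies''). The reduction to the spectral statement $\mu_{\max}(L)\le\sum_{i<j}r_{ij}+\max_{i<j}r_{ij}$ for the weighted graph Laplacian $L$ is correct and is a sensible framing, but every concrete estimate you then attempt is either abandoned as not sharp (the trace argument, the Weyl splitting into edge Laplacians, Gershgorin applied to $L$) or falls short by the decisive margin: your step (3) actually yields
$\sum_{i<j}|\eta_i-\eta_j|^2r_{ij}\le 2\sum_{i<j}\bigl(|\eta_i|^2+|\eta_j|^2\bigr)r_{ij}=2\sum_i|\eta_i|^2d_i\le 2\sum_{i<j}r_{ij}$
with $d_i:=\sum_{j\ne i}r_{ij}$, which can be nearly twice the claimed bound when $\max r_{ij}$ is small compared with $\sum r_{ij}$. (Note also that the factor $2$ is dropped in your display of step (3).) Moreover, the diagnosis you offer for why the argument stalls is off target: the hypothesis $\sum_i\eta_i=0$ is not the crux, since $Q(\eta+c\mathbf{1})=Q(\eta)$ and centering only decreases $\sum_i|\eta_i|^2$, so the constrained and unconstrained maxima of $Q$ on the unit sphere coincide. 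What is actually missing is a sharp spectral bound. One that closes the gap is the weighted Anderson--Morley inequality $\mu_{\max}(L)\le\max\{d_i+d_j:\ r_{ij}>0\}$ (proved by the standard two-index eigenvector argument: pick $i$ maximizing $|x_i|$, then $j$ maximizing $|x_j|$ among indices with $r_{ij}>0$, and multiply the two relations $(\mu-d_i)|x_i|\le d_i|x_j|$, $(\mu-d_j)|x_j|\le d_j|x_i|$); combined with the counting identity $d_i+d_j=2r_{ij}+\sum_{e\ni i\text{ or }j,\ e\ne\{i,j\}}r_e\le\sum_{k<l}r_{kl}+r_{ij}$, this gives exactly the stated bound, with equality for stars and single edges as you anticipated. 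Your Gershgorin bound $\max_i 2d_i$ is the wrong row-sum estimate; the edge-indexed one is what is needed.

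For context: the paper itself supplies no proof of this lemma --- it is imported from \cite{Lu11} --- so there is no in-paper argument to compare against; but as it stands your proposal does not establish the inequality.
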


\begin{cor}\label{Lucorcom2}
The complex LW Conjecture \ref{conj2D} is true when $X$ is a normal matrix.
\end{cor}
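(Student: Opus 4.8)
The plan is to deduce Corollary \ref{Lucorcom2} from Theorem \ref{normallemma}'s setup combined with Lemma \ref{Lucorcom1}, exactly as one passes from the $k=2$ case to the general $k$ case. First I would reduce to the equivalent formulation given by Conjecture \ref{conj2A}: by Propositions \ref{prop equ2 5} and \ref{eq-45} it suffices to show that for normal $X$ with $\|X\|=1$ one has $\sum_{i=1}^{2k}\lambda_i(T_X)\leq 2k+2$ for all $k$, or equivalently (after the even-multiplicity Proposition \ref{prop2}) that $\sum_{i=1}^{k}\lambda_{2i-1}(T_X)\leq k+1$. As in the proof of Theorem \ref{normallemma}, since $X$ is normal we may unitarily diagonalize $X=\diag(x_1,\dots,x_n)$ with $\sum|x_i|^2=1$, and then the spectrum of $T_X$ is precisely the multiset $\{|x_i-x_j|^2:1\le i,j\le n\}$, where the diagonal entries contribute the $n$ zero eigenvalues and the off-diagonal pairs $(i,j)$ and $(j,i)$ contribute the value $|x_i-x_j|^2$ with its automatic multiplicity two.

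Next I would translate the desired inequality into the form of Lemma \ref{Lucorcom1}. The point is that $\sum_{i=1}^{k}\lambda_{2i-1}(T_X)$ is the sum of the $k$ largest values among $\{|x_i-x_j|^2 : i<j\}$. So, given any selection of $k$ pairs $\{(i,j)\}$, I want to bound $\sum |x_i-x_j|^2$ over those pairs. This is exactly captured by choosing $r_{ij}=1$ for the $k$ selected pairs and $r_{ij}=0$ otherwise; however, the hypothesis of Lemma \ref{Lucorcom1} requires $\eta_1+\cdots+\eta_n=0$, i.e.\ $\Tr X=0$, which need not hold. The standard fix is to replace $x_i$ by $\eta_i:=x_i-\bar x$ where $\bar x=\frac1n\sum x_i$; this leaves every difference $x_i-x_j=\eta_i-\eta_j$ unchanged, and $\sum|\eta_i|^2=1-n|\bar x|^2\le 1$, so after rescaling $\eta_i$ by $1/\sqrt{\sum|\eta_j|^2}\ge 1$ we only make the left side larger, hence the inequality for the normalized $\eta$'s implies the one we want. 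Applying Lemma \ref{Lucorcom1} with the indicator weights then gives $\sum_{\text{selected}}|\eta_i-\eta_j|^2\le \sum r_{ij}+\max r_{ij}=k+1$, which is exactly the bound $\sum_{i=1}^k\lambda_{2i-1}(T_X)\le k+1$.

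Finally I would assemble these pieces: for every $k$ in the range $1,\dots,[n^2/2]$ the inequality $\sum_{i=1}^{2k}\lambda_i(T_X)=2\sum_{i=1}^k\lambda_{2i-1}(T_X)\le 2(k+1)=2k+2$ holds, which is precisely Conjecture \ref{conj2D} for normal $X$; the case $k=1$ is also just the complex BW inequality. I do not expect a serious obstacle here: the structural input (Proposition \ref{prop2} giving even multiplicities so that $\lambda_{2i-1}=\lambda_{2i}$, and the explicit spectrum for normal matrices from Theorem \ref{normallemma}'s proof) is already in hand, and Lemma \ref{Lucorcom1} is stated in exactly the generality needed. The only mild care points are the reduction to the trace-zero normalization (handled by the shift $x_i\mapsto x_i-\bar x$ together with the monotonicity coming from $\sum|\eta_j|^2\le 1$) and checking that the weights $r_{ij}$ chosen as $0/1$ indicators indeed realize the sum of the top $k$ eigenvalues of $T_X$, which follows since the eigenvalues of $T_X$ on the span of the relevant $E_{ij}$'s are exactly $|x_i-x_j|^2$ and $\lambda_{2i-1}(T_X)$ picks out the $i$-th largest among the off-diagonal pairs.
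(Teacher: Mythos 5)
Your proposal is correct and follows essentially the same route as the paper: identify the spectrum of $T_X$ for normal $X$ as the multiset $\{|x_i-x_j|^2\}$ and feed the top $k$ off-diagonal pairs into Lemma \ref{Lucorcom1} as $0/1$ weights $r_{ij}$. In fact you are slightly more careful than the printed proof, which applies Lemma \ref{Lucorcom1} without spelling out the reduction to the trace-zero, norm-one normalization; your shift $x_i\mapsto x_i-\bar x$ followed by rescaling (using $\sum_i|x_i-\bar x|^2\leq 1$ and the invariance of the differences) is exactly the right way to close that small gap.
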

\begin{proof}
Let $X$ be a normal matrix and $r_{ij}\in \{0,1\}$,
then it follows from the proof of Theorem \ref{normallemma} that
$$\lambda(T_X)=\left\{|\eta_i-\eta_j|^2: 1\leq i, j\leq n\right\},$$
where $\lambda(X)=\{\eta_1,\eta_2,\cdots,\eta_n\}$. Thus Corollary \ref{Lucorcom1} applies to tell us
$$\sum_{\alpha=1}^k\lambda_{2\alpha-1}\leq k+1,$$
where  $\lambda_{2\alpha-1}$ equals some $|\eta_i-\eta_j|^2$ and $r_{ij}=1$ for $k$ pairs of $(i<j)$.

This completes the proof.
\end{proof}

\begin{cor}
Let $B_1,\cdots,B_m\in M(n,\mathbb{C})$ be Hermitian metrices. Assume that
\begin{equation}\label{trace}
\Tr \Big(B_\alpha [B_\gamma, B_\beta]\Big)=0
\end{equation}
 for any $1\leq \alpha,\beta,\gamma\leq m$, we have
\begin{equation*}
\sum^m_{\alpha,\beta=1}\|\[B_\alpha,B_\beta\]\|^2\leq  \(\sum^m_{\alpha=1}\|B_\alpha\|^2\)^2.
\end{equation*}
\end{cor}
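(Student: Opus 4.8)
The plan is to reduce this Hermitian DDVV-type inequality to the normal-matrix case of the complex LW conjecture, which was just established in Corollary \ref{Lucorcom2}, by following exactly the route used in the proof that the LW Conjecture \ref{conj2} implies Conjecture \ref{conj1}. First I would observe that the target inequality $\sum_{\alpha,\beta}\|[B_\alpha,B_\beta]\|^2\le(\sum_\alpha\|B_\alpha\|^2)^2$ is invariant under the two families of transformations $B_\alpha\mapsto QB_\alpha Q^*$ for $Q\in U(n)$ and $B_\alpha\mapsto\sum_\beta p_{\alpha\beta}B_\beta$ for real orthogonal $P=(p_{\alpha\beta})$ — note that since the $B_\alpha$ are Hermitian we must restrict the second family to \emph{real} coefficient matrices so that Hermiticity is preserved, but the orthogonal group still suffices to carry out the normalization. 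Both the norm side and the commutator side transform covariantly, and the trace condition \eqref{trace} is also preserved.

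Next I would run the same extremal argument: let $a>0$ be the largest constant with $(\sum_\alpha\|B_\alpha\|^2)^2\ge 2a\sum_{\alpha<\beta}\|[B_\alpha,B_\beta]\|^2$ for all Hermitian tuples satisfying \eqref{trace}, pick an extremal configuration, and use the orthogonal freedom to arrange $\Tr(B_\alpha B_\beta^*)=0$ for $\alpha\ne\beta$, $\Tr(B_\alpha[B_\gamma,B_\beta])=0$ for all indices, and $0\ne\|B_1\|\ge\|B_2\|\ge\cdots\ge\|B_m\|$. Setting $t^2=\|B_1\|^2$ and $A'=B_1/|t|$ and treating the extremal identity as a quadratic in $t^2$, nonnegativity of that quadratic for all $t^2\ge 0$ together with vanishing at $t^2=\|B_1\|^2$ forces
$$\|B_1\|^2 = a\sum_{1<\alpha}\|[A',B_\alpha]\|^2-\sum_{1<\alpha}\|B_\alpha\|^2.$$
Now the crucial point: $A'=B_1/|t|$ is Hermitian, hence normal with $\|A'\|=1$, and $B_2,\dots,B_m$ are $\mathbb{C}$-orthogonal with the second trace condition $\Tr(B_\alpha[A',B_\beta])=0$ holding. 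So $\sum_{1<\alpha}\|[A',B_\alpha]\|^2=\sum_{\alpha=2}^m\langle T_{A'}B_\alpha,B_\alpha\rangle$, and applying Corollary \ref{Lucorcom2} (the complex LW Conjecture \ref{conj2D}, equivalently \ref{conj2C}, for the normal matrix $A'$) gives $\sum_{1<\alpha}\|[A',B_\alpha]\|^2\le \|B_2\|^2+\sum_{\alpha=2}^m\|B_\alpha\|^2\le\sum_{\alpha=1}^m\|B_\alpha\|^2$. Plugging back shows $\|B_1\|^2\le a\sum_{\alpha=1}^m\|B_\alpha\|^2-\sum_{1<\alpha}\|B_\alpha\|^2$, i.e. $\|B_1\|^2+\sum_{1<\alpha}\|B_\alpha\|^2=\sum_\alpha\|B_\alpha\|^2\le a\sum_\alpha\|B_\alpha\|^2$, forcing $a\ge 1$, which is the claim with constant $c=1$.

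The step I expect to require the most care is verifying that the reduction to the extremal configuration genuinely stays inside the class of Hermitian tuples: one needs that the $P$-action can be taken orthogonal (not merely unitary) without losing enough freedom to orthogonalize the $B_\alpha$ and to kill the mixed traces, and that the maximal $a$ is actually attained on a compact slice — this is exactly the delicate bookkeeping that \cite{LW16} handles in the real case, and here it is essentially identical because Hermitian matrices with the Frobenius inner product form a \emph{real} inner product space on which $O(m)$ acts just as in the real-symmetric setting. The rest is the same quadratic-in-$t^2$ manipulation as in the proof of the preceding proposition, so I would state it briefly and refer back rather than repeat it in full.
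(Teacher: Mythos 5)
Your proposal is correct and follows essentially the same route as the paper: the paper's proof is just the two-sentence observation that Hermitian matrices are normal, so Corollary \ref{Lucorcom2} gives the complex LW Conjecture in the relevant case, and then it cites the implication ``LW Conjecture $\Rightarrow$ Conjecture \ref{conj1}'' from Theorem \ref{equiv conj}, whose proof is exactly the extremal/quadratic-in-$t^2$ argument you reproduce. Your extra care about restricting the $P$-action to real orthogonal matrices so as to stay in the Hermitian class is a point the paper glosses over, and it is handled correctly in your write-up.
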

\begin{proof}
As Hermitian matrices are normal matrices, by Corollary \ref{Lucorcom2} above, the complex LW Conjecture \ref{conj2D} holds for this case. This in turn by Theorem \ref{equiv conj} implies the complex version of Conjecture \ref{conj1}.
\end{proof}
\begin{rem} When $B_1,\cdots, B_m$ are real symmetric matrices, ~\eqref{trace} is valid for all $\alpha,\beta,\gamma$. Thus the corollary generalizes the DDVV inequality and is sharp under the trace condition~\eqref{trace}. We remind that for general Hermitian matrices the optimal constant $c=\frac{4}{3}$ is bigger than $1$ here (cf. Section \ref{sect-intr},  \cite{GXYZ17}, \cite{GLZ18}).
\end{rem}

Next we prove Conjecture \ref{conj3} for the second special case $\rank(X)=1$.
We will need the following lemma.

\begin{lem}\label{lemThompson}\cite{BR97}
Let $M\in M(n,\mathbb{C})$ be a complex matrix. Then
$$\lambda_i(\frac{M^*+M}{2})\leq \sigma_i(M), \quad i=1, \cdots,n,$$
where $\lambda_i$ and $\sigma_i$ are eigenvalues and singular values, respectively.
\end{lem}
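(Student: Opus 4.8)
The plan is to deduce the inequality directly from the Courant--Fischer min--max principle applied to both sides. Write $H:=\frac{M^*+M}{2}$ for the Hermitian part of $M$, with ordered eigenvalues $\lambda_1(H)\geq\cdots\geq\lambda_n(H)$, and recall that $\sigma_i(M)=\sqrt{\lambda_i(M^*M)}$ where $M^*M$ is Hermitian and positive semi-definite. First I would record the two variational formulas I need. For any unit vector $x\in\mathbb{C}^n$ one has $\langle Hx,x\rangle=\tfrac12\big(\langle Mx,x\rangle+\overline{\langle Mx,x\rangle}\big)=\Re\langle Mx,x\rangle$, so Courant--Fischer (in its ``min--max'' form, using subspaces of dimension $n-i+1$) gives
$$\lambda_i(H)=\min_{\substack{W\subseteq\mathbb{C}^n\\ \dim W=n-i+1}}\ \max_{\substack{x\in W\\ \|x\|=1}}\Re\langle Mx,x\rangle .$$
Applying the same principle to $M^*M$ and taking square roots yields
$$\sigma_i(M)=\min_{\substack{W\subseteq\mathbb{C}^n\\ \dim W=n-i+1}}\ \max_{\substack{x\in W\\ \|x\|=1}}\|Mx\| .$$

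The only estimate required is the pointwise bound: for every unit vector $x$,
$$\Re\langle Mx,x\rangle\leq|\langle Mx,x\rangle|\leq\|Mx\|\,\|x\|=\|Mx\|,$$
which is just the Cauchy--Schwarz inequality. Hence for each fixed subspace $W$ with $\dim W=n-i+1$ we obtain $\max_{x\in W,\ \|x\|=1}\Re\langle Mx,x\rangle\leq\max_{x\in W,\ \|x\|=1}\|Mx\|$, and taking the infimum over all such $W$ on both sides and comparing with the two displayed formulas gives $\lambda_i(H)\leq\sigma_i(M)$ for $i=1,\dots,n$.

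Every step is a standard linear-algebra fact, so there is no real obstacle; the only point requiring care is the index bookkeeping, namely keeping both the eigenvalues of $H$ and the singular values of $M$ in decreasing order and using the subspace dimension $n-i+1$ so that the ``min--max'' (rather than ``max--min'') form of Courant--Fischer is applied consistently on both sides. If one prefers to avoid quoting Courant--Fischer twice, the same argument can be packaged through the polar decomposition $M=UP$ with $U$ unitary and $P=(M^*M)^{1/2}\geq0$: then $\langle Hx,x\rangle=\Re\langle Px,U^*x\rangle\leq\|Px\|\,\|U^*x\|=\|Px\|$, and applying the min--max principle to the single Hermitian matrix $P$ (whose eigenvalues are exactly the $\sigma_i(M)$) finishes the proof. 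This is merely a repackaging of the same Cauchy--Schwarz bound, so I would present the first version as the main line of argument.
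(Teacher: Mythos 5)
Your proof is correct. Note that the paper itself gives no proof of this lemma: it is quoted from Bhatia's \emph{Matrix Analysis} \cite{BR97} (it is the Fan--Hoffman theorem), so there is nothing internal to compare against. Your argument is the standard one for that theorem: the identity $\langle \frac{M+M^*}{2}x,x\rangle=\Re\langle Mx,x\rangle$, the pointwise Cauchy--Schwarz bound $\Re\langle Mx,x\rangle\leq\|Mx\|$ for unit $x$, and the Courant--Fischer min--max characterizations of $\lambda_i\bigl(\frac{M+M^*}{2}\bigr)$ and of $\sigma_i(M)=\sqrt{\lambda_i(M^*M)}$ (where passing from $\max_{x\in W}\|Mx\|^2$ to $\max_{x\in W}\|Mx\|$ and taking square roots is harmless since all quantities are nonnegative). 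The index bookkeeping with subspaces of dimension $n-i+1$ is handled consistently on both sides, so the comparison of the two min--max expressions is valid; the polar-decomposition variant you sketch is indeed just the same estimate in different clothing. This is a complete and correct substitute for the citation.
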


\begin{thm}\label{thm3}
The complex LW Conjecture \ref{conj2D} is true when $\rank(X)=1$.
\end{thm}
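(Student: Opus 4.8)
The plan is to reduce, via unitary congruence, to the case where $X$ is supported in a $2\times 2$ block, and then read off the entire spectrum of $T_X$ from the block structure of $T_{\hat X}$. Since $\rank X=1$, write $X=ab^*$ with $\|a\|=\|b\|=1$. The subspace $\Span_{\mathbb C}\{a,b\}$ has dimension at most two, so we may pick $U\in U(n)$ whose first two columns span it; then $U^*XU=(U^*a)(U^*b)^*=\diag(X_0,O_{n-2})$ for some $X_0\in M(2,\mathbb C)$ with $\rank X_0=1$ and $\|X_0\|=1$. By Lemma \ref{lem-inv}, $\lambda(T_X)=\lambda(T_{\hat X})$ with $\hat X:=\diag(X_0,O_{n-2})$, so it suffices to analyze $T_{\hat X}$.

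Writing $Y\in M(n,\mathbb C)$ in $2+(n-2)$ block form $Y=\begin{pmatrix}Y_{11}&Y_{12}\\ Y_{21}&Y_{22}\end{pmatrix}$, a direct commutator computation gives
\[
T_{\hat X}(Y)=\begin{pmatrix}T_{X_0}(Y_{11})& X_0^*X_0\,Y_{12}\\ Y_{21}\,X_0X_0^*&0\end{pmatrix}.
\]
Thus $T_{\hat X}$ is block-diagonal for the orthogonal splitting $M(n,\mathbb C)=\{Y_{11}\}\oplus\{Y_{12}\}\oplus\{Y_{21}\}\oplus\{Y_{22}\}$: it acts as $T_{X_0}$ on the first summand, as left multiplication by $X_0^*X_0$ on the second, as right multiplication by $X_0X_0^*$ on the third, and as $0$ on the fourth. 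Because $\rank X_0=1$ and $\|X_0\|=1$, both $X_0^*X_0$ and $X_0X_0^*$ have eigenvalues $\{1,0\}$, so the second and third blocks each contribute $\{1^{n-2},0^{n-2}\}$ and the fourth contributes $\{0^{(n-2)^2}\}$. Applying Corollary \ref{cortrTX} to $X_0\in M(2,\mathbb C)$ gives $\lambda(T_{X_0})=\{(2-s)^2,0^2\}$ with $s:=|\Tr X_0|^2$. Altogether,
\[
\lambda(T_X)=\{(2-s)^2,\ 1^{2n-4},\ 0^{(n-1)^2+1}\}.
\]

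To conclude: since $X$ has rank one and $\|X\|=1$, $|\Tr X|=|\Tr X_0|=|\langle a,b\rangle|\le 1$ (alternatively via Lemma \ref{lemThompson}), so $0\le s\le 1$ and hence $1\le 2-s\le 2$. Therefore $\lambda(T_X)$ is term-by-term dominated by $\{2^2,1^{2n-4},0^{(n-1)^2+1}\}$, in particular $\lambda(T_X)\prec\{2^2,1^{2n-4},0^{(n-1)^2+1}\}$, which is precisely Conjecture \ref{conj4}; equivalently, directly, $\sum_{i=1}^{2k}\lambda_i(T_X)=2(2-s)+(2k-2)=2k+2-2s\le 2k+2$ for $2\le k\le n-1$, $\sum_{i=1}^{2}\lambda_i(T_X)=2(2-s)\le 4$, and $\sum_{i=1}^{2k}\lambda_i(T_X)\le \Tr T_X=2n-2s\le 2n\le 2k+2$ for $k\ge n-1$, which is Conjecture \ref{conj2D}. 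By the complex version of Theorem \ref{equiv conj} (Propositions \ref{prop equ2 5} and \ref{eq-45}), this proves the complex LW Conjecture in the case $\rank X=1$. I do not foresee a serious obstacle; the only points needing care are getting the off-diagonal blocks of $T_{\hat X}$ right in the block computation and observing that $\rank X_0=1$ forces $X_0^*X_0$ and $X_0X_0^*$ to have spectrum $\{1,0\}$, after which the majorization is immediate.
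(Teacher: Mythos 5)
Your proposal is correct, and it takes a genuinely different route from the paper. The paper works with the vectorized operator $K_X^*K_X=K_1+K_2$ from Proposition \ref{VecTX}: it bounds $\lambda_i(K_2)\le 2\sigma_i(K_3)$ via Lemma \ref{lemThompson}, observes that $\rank X=1$ forces $\sigma(K_3)=\{1,0^{N-1}\}$ and $\lambda(K_1)=\{2,1^{2n-2},0^{(n-1)^2}\}$, and then combines these with Lemma \ref{lem3} and Proposition \ref{prop2} to get $\sum_{i=1}^k\lambda_{2i-1}(T_X)\le\sum_{i=1}^k\lambda_i(K_1)\le k+1$; the exact spectrum is only obtained afterwards, in a separate theorem, by a block-determinant computation of the characteristic polynomial. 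You instead reduce by unitary congruence to $\hat X=\diag(X_0,O_{n-2})$ and diagonalize $T_{\hat X}$ blockwise (your block formula for $T_{\hat X}(Y)$ checks out, as does the eigenvalue count $4+2(n-2)+2(n-2)+(n-2)^2=n^2$ and the bound $|\Tr X_0|\le 1$ from Cauchy--Schwarz), obtaining the full spectrum $\{(2-s)^2,1^{2n-4},0^{(n-1)^2+1}\}$ in one stroke. This is arguably cleaner and strictly more informative: it yields the weak majorization of Conjecture \ref{conj4} by termwise domination, and it recovers as byproducts both the paper's characteristic polynomial $(\lambda-2+|\Tr X|^2)^2(\lambda-1)^{2n-4}\lambda^{(n-1)^2+1}$ and Corollary \ref{corrankX1}. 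What the paper's route buys in exchange is a template that does not depend on the rank-one structure being visible in a $2\times2$ corner — the same $K_1+K_2$ splitting and perturbation lemmas drive the general estimates of Theorems \ref{thm-conj3-weak} and \ref{thm-conj7-weak}, whereas your block reduction is special to rank one.
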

\begin{proof}
Recall Proposition \ref{VecTX} that we have $K_X^*K_X=K_1+K_2,$
where $K_1=I\otimes X^*X+\overline{X}X^t\otimes I, \ K_2=-\(X^t\otimes X^*+\overline{X}\otimes X\).$
Denote $K_3=-X^t\otimes X^*$, then $K_2=K_3^*+K_3$ and by Lemma \ref{lemThompson},
$$\lambda_i(K_2)=2\lambda_i(\frac{K_3^*+K_3}{2})\leq 2\sigma_i(K_3), \quad i=1, \cdots,n.$$
Let ${\sigma}_1(X)\geq\cdots\geq{\sigma}_n(X)$ be singular values of $X$, then by Lemma \ref{lem2},
$$\sigma(K_3)=\{{\sigma}_i(X){\sigma}_j(X): 1\leq i, j\leq n\}.$$
In particular, now $\rank(X)=1$ implies ${\sigma}_1(X)=1$ and ${\sigma}_i(X)=0$ for $2\leq i\leq n$. Thus we have
$\sigma(K_3)=\{1^1,0^{N-1}\}$ and by (\ref{eigenvK1})
$$\lambda(K_1)=\{{\sigma}_i(X)^2+{\sigma}_j(X)^2: 1\leq i, j\leq n\}=\{2^1,1^{2(n-1)},0^{(n-1)^2}\}.$$

Finally by Propositions \ref{prop2}, \ref{VecTX} and Lemma \ref{lem3}, we have
$$\begin{aligned}
\sum_{i=1}^{k}\lambda_{2i-1}(T_X)=\sum_{i=1}^{k}\lambda_{2i}(T_X)
&\leq\sum_{i=1}^{k}\lambda_i(K_1)+\sum_{i=1}^{k}\lambda_{2i}(K_2)\\
&\leq\sum_{i=1}^{k}\lambda_i(K_1)+\sum_{i=1}^{k}2\sigma_{2i}(K_3)\\
&=\sum_{i=1}^{k}\lambda_i(K_1)\leq k+1,
\end{aligned}$$
which completes the proof.
\end{proof}

Furthermore, we can get the characteristic polynomial of $T_X$ if $\rank(X)=1$.
\begin{prop}\label{prop3}
Let $K_X=I\otimes X\ -X^t\otimes I$. Then the sets of singular values  $$\sigma(K_X)=\sigma\Big(I\otimes \Lambda\ -\(\Lambda\otimes I\)\( Q^t\otimes \ Q^*\)\Big),$$
where $X=Q_1\Lambda Q_2$ is the singular value decomposition of $X$ and $Q=Q_2Q_1$.
\end{prop}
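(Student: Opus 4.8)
The plan is to exploit the fact that singular values are unchanged under left and right multiplication by unitary matrices, and to use Kronecker-product unitaries to diagonalize the ``inner'' factor of $K_X$. First I would write the singular value decomposition $X=Q_1\Lambda Q_2$ with $Q_1,Q_2\in U(n)$ and $\Lambda=\diag(\sigma_1(X),\dots,\sigma_n(X))$. Since $\Lambda$ is diagonal we have $X^t=Q_2^t\Lambda Q_1^t$, hence
$$K_X=I\otimes X-X^t\otimes I=I\otimes(Q_1\Lambda Q_2)-(Q_2^t\Lambda Q_1^t)\otimes I.$$

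Next I would recall the mixed-product identity $(A\otimes B)(C\otimes D)=AC\otimes BD$ and the fact that $P_1\otimes P_2$ is unitary whenever $P_1,P_2\in U(n)$; consequently $\sigma(K_X)=\sigma\big((P_1\otimes P_2)K_X(P_3\otimes P_4)\big)$ for any unitaries $P_i$. The main step is then to choose the factors that bring $K_X$ into the claimed form. Taking $P_1=\overline{Q_2}$, $P_2=Q_1^*$ on the left and $P_3=Q_2^t$, $P_4=Q_2^*$ on the right, and using $\overline{Q_2}\,Q_2^t=I$ (the transpose of $Q_2Q_2^*=I$), $Q_1^*Q_1=Q_2Q_2^*=I$, together with $Q^t=(Q_2Q_1)^t=Q_1^tQ_2^t$ and $Q^*=(Q_2Q_1)^*=Q_1^*Q_2^*$, a direct computation gives
$$(\overline{Q_2}\otimes Q_1^*)\,K_X\,(Q_2^t\otimes Q_2^*)=I\otimes\Lambda-(\Lambda Q^t)\otimes Q^*=I\otimes\Lambda-(\Lambda\otimes I)(Q^t\otimes Q^*).$$
Since the left-hand side has the same singular values as $K_X$, this is exactly the asserted equality of singular value sets.

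The whole argument is essentially a one-line computation once the bookkeeping of transposes and conjugations is carried out correctly; there is no genuine obstacle beyond verifying that the chosen Kronecker-product unitaries collapse the $Q_1$'s and $Q_2$'s as claimed, in particular the identity $\overline{Q_2}\,Q_2^t=I$, which is precisely why the conjugate appears in $P_1=\overline{Q_2}$. Only the invariance of singular values under unitary multiplication is used, so nothing more delicate (such as unitary congruence in the strict sense) is needed.
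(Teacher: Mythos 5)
Your proposal is correct and is essentially the paper's own argument: the paper writes $K_X=(Q_2^t\otimes Q_1)\bigl[I\otimes\Lambda-(\Lambda\otimes I)(Q^t\otimes Q^*)\bigr](\overline{Q_2}\otimes Q_2)$, and your unitaries $\overline{Q_2}\otimes Q_1^*$ and $Q_2^t\otimes Q_2^*$ are exactly the adjoints of those two Kronecker factors, so the two computations coincide. The bookkeeping (including $\overline{Q_2}\,Q_2^t=I$) checks out, and invariance of singular values under left/right unitary multiplication is indeed all that is used.
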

\begin{proof}
Direct calculations show
$$\begin{aligned}
K_X
&=I\otimes X\ -X^t\otimes I\\
&=I\otimes \(Q_1\Lambda Q_2\) -\(Q_2^t\Lambda Q_1^t\)\otimes I\\
&=\( Q_2^t\otimes \ Q_1\)
\[ I\otimes \Lambda\ -\(\Lambda\otimes I\)\( Q^t\otimes \ Q^*\) \]
\( \overline{Q_2}\otimes \ Q_2\).\\
\end{aligned}$$
This completes the proof by the invariance of singular values under congruences.
\end{proof}

\begin{thm}
Let $X$ be a complex square matrix of order $n$ $(\geq2)$ with $\|X\|=1$ and $rank(X)=1$, then the characteristic polynomial of $T_X$ is
$$\det(\lambda I-T_X)= \(\lambda-2+\left| \Tr X \right|^2\)^2\(\lambda-1\)^{2n-4}\lambda^{\(n-1\)^2+1}.$$
\end{thm}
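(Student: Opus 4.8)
The plan is to compute the full spectrum of $T_X$ as a multiset when $\rank X = 1$, building on the structural results already in place. By Lemma \ref{lem-inv} the spectrum is invariant under unitary congruence of $X$, so I may assume $X$ has a convenient normal form. Since $\rank X = 1$ and $\|X\| = 1$, write $X = v w^*$ with $v, w \in \mathbb{C}^n$ and, after unitary congruence, arrange that $v$ lies in the span of the first two coordinate vectors; concretely one can reduce to $X$ supported in a $2\times 2$ block, so that $\Tr X$ is the single possibly-nonzero diagonal entry and $|\Tr X|^2 = |w^*v|^2$. The point is that the eigenvalue data will depend on $X$ only through $\|X\| = 1$ and $|\Tr X|$.

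The key computation is to identify all eigenvalues with multiplicities. From the proof of Theorem \ref{thm3} we already know $\lambda(K_1) = \{2^1, 1^{2(n-1)}, 0^{(n-1)^2}\}$ and that $2K_1 - \Vec\circ T_X\circ(\Vec)^{-1} = \widehat K_X^*\widehat K_X \geq 0$ (from Theorem \ref{second new proof}), while $\sigma(K_3) = \{1^1, 0^{N-1}\}$ controls $K_2$. Combining these with Proposition \ref{prop2} (even multiplicities of positive eigenvalues) forces the shape $\{\lambda_1^2, 1^{2n-4}, 0^{(n-1)^2+1}\}$ for some $\lambda_1 \in [1,2]$: indeed $\Tr T_X = 2n - 2|\Tr X|^2$ by Corollary \ref{cortrTX}, the bound $\lambda_i(T_X) \le 2\lambda_i(K_1)$ pins down the tail, and $\rank T_X \le 2\rank K_1 \cdot(\text{something})$ together with the explicit low rank of $K_2$ caps the number of nonzero eigenvalues. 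Then the trace identity gives $2\lambda_1 + 2(n-2) = 2n - 2|\Tr X|^2$, i.e. $\lambda_1 = 2 - |\Tr X|^2$, which is exactly the claimed factor $(\lambda - 2 + |\Tr X|^2)^2$. To nail down the middle block — that the eigenvalue $1$ occurs with multiplicity exactly $2n-4$ and $0$ with multiplicity exactly $(n-1)^2 + 1$ — I would exhibit explicit eigenvectors: using $X = vw^*$ one checks directly that $[X, Y] = vw^*Y - Yvw^*$ and hence $T_X Y = [X^*,[X,Y]]$ acts with eigenvalue $1$ on matrices of the form (a vector orthogonal to the relevant directions)$\,w^*$ or $v\,$(a covector orthogonal to the relevant directions), and with eigenvalue $0$ on the large complementary space; a dimension count then closes the argument.

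Alternatively, and perhaps more cleanly, I would use Proposition \ref{prop3}: $\sigma(K_X) = \sigma\big(I\otimes\Lambda - (\Lambda\otimes I)(Q^t\otimes Q^*)\big)$ with $\Lambda = \diag(1,0,\dots,0)$ since $\rank X = 1$. With $\Lambda$ so degenerate the matrix $I\otimes\Lambda - (\Lambda\otimes I)(Q^t\otimes Q^*)$ has an enormous kernel and its nonzero singular values come from a tiny block governed by the first row/column of $Q$; since $\lambda(T_X) = \sigma(K_X)^2 = \lambda(K_X^*K_X)$, reading off the characteristic polynomial of that small block and appending the correct powers of $(\lambda - 1)$ and $\lambda$ yields the formula. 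The relation $|Q_{11}|$ (the $(1,1)$ entry of $Q = Q_2Q_1$) to $|\Tr X|$ is the bookkeeping that produces $2 - |\Tr X|^2$.

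The main obstacle is the middle block: showing the multiplicity of the eigenvalue $1$ is \emph{exactly} $2n - 4$ (not $2n-2$ or $2n-3$) and correspondingly that the degenerate factor is a genuine square $(\lambda - 2 + |\Tr X|^2)^2$ rather than $(\lambda - 2 + |\Tr X|^2)$ times $(\lambda - 1)$. The parity constraint from Proposition \ref{prop2} does half the work — it rules out odd multiplicities among the positive eigenvalues — but one still must verify that $2 - |\Tr X|^2$ is strictly less than $2$ generically yet strictly greater than $1$ unless $|\Tr X|^2 \ge 1$, and handle the boundary cases (e.g. $X$ nilpotent of rank $1$, where $\Tr X = 0$ and $\lambda_1 = 2$, recovering the BW-equality configuration of Theorem \ref{equal}, or $|\Tr X| = 1$ forcing $X$ to be a rank-one projection and $\lambda_1 = 1$) so that the stated factorization is uniformly correct. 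I expect to dispatch this by the explicit eigenvector construction above, since it simultaneously gives the eigenvalue and its multiplicity without case analysis.
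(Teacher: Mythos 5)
Your ``alternative'' route is precisely the paper's proof: the authors invoke Proposition \ref{prop3} with $\Lambda=\diag(1,0,\cdots,0)$, write out $\widetilde{K_X}\widetilde{K_X}^*$ in block form, and evaluate $\det(\lambda I-\widetilde{K_X}\widetilde{K_X}^*)$ by two successive Schur complements, closing with the observation $q_{11}=\Tr X$. Your primary route is genuinely different and, once tightened, also works; it replaces the determinant computation by spectral bookkeeping. The clean version is: by Theorem \ref{second new proof} and $\lambda(K_1)=\{2^{1},1^{2(n-1)},0^{(n-1)^2}\}$ one gets $\lambda_i(T_X)\leq 2\lambda_i(K_1)=0$ for $i\geq 2n$, and Proposition \ref{prop2} then caps $\rank T_X$ at the even number $2n-2$; writing $X=vw^*$ with $\|v\|=\|w\|=1$, so that $T_XY=ww^*Y+Yvv^*-(v^*Yv)ww^*-(w^*Yw)vv^*$ and $\Tr X=w^*v$, the matrices $wz^*$ and $zv^*$ with $z\bot v,w$ are eigenvectors of eigenvalue $1$, giving multiplicity at least $2n-4$; the at most two remaining positive eigenvalues are equal by Proposition \ref{prop2}, cannot both vanish (that would force $|\Tr X|^2=2$ via Corollary \ref{cortrTX}), and the trace identity evaluates each as $2-|\Tr X|^2$. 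The degenerate case $v\parallel w$ merges this value with $1$ and the stated factorization still holds. Two points in your sketch need repair. First, your candidate eigenvectors $zw^*$ and $vz^*$ are not eigenvectors: one computes $T_X(zw^*)=(w^*v)\,zv^*$ and $T_X(vz^*)=(w^*v)\,wz^*$; the correct forms are $wz^*$ and $zv^*$ as above. Second, the bounds on $K_1$, $K_2$ and parity do \emph{not} by themselves ``force the shape'' $\{\lambda_1^2,1^{2n-4},0^{(n-1)^2+1}\}$ --- without the explicit eigenvectors the middle $2n-4$ eigenvalues could a priori be anything consistent with the trace --- so the eigenvector construction is the load-bearing step, not optional polish. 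With those fixes your argument is shorter and more conceptual than the paper's block-determinant calculation, at the cost of assembling the characteristic polynomial from its pieces rather than producing it in one formula.
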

\begin{proof}
Let $X=Q_1\Lambda Q_2$ be the singular value decomposition and $Q=Q_1^tQ_2^t=:(q_{ij})$.
Proposition \ref{prop3} implies
$\sigma(K_X)=\sigma(\widetilde{K_X})$, where
$\widetilde{K_X}=I\otimes \Lambda\ -\(\Lambda\otimes I\)\( Q\otimes \ \overline{Q}\)$.
By Proposition \ref{VecTX}, we have
$$\lambda(T_X)=\lambda(K_XK_X^*)=\lambda(\widetilde{K_X}\widetilde{K_X}^*),$$  where direct calculations show
$$\widetilde{K_X}\widetilde{K_X}^*=I\otimes \Lambda^2\ +\Lambda^2\otimes I\
- \(Q^*\Lambda\)\otimes \(\Lambda Q^t\)
- \(\Lambda Q\)\otimes \(\overline{Q}\Lambda \).$$
Since $\|X\|$=1 and $\rank(X)=1$, it implies $\Lambda=\diag(1,0,\cdots,0)$. By direct calculations, we have
$I\otimes \Lambda^2\ +\Lambda^2\otimes I\
=\diag(I+\Lambda, \Lambda,\cdots, \Lambda)$ and thus
$$\lambda\(I\otimes I\) -\widetilde{K_X}\widetilde{K_X}^*=
\begin{pmatrix}
 A & B \\
C & D \\
\end{pmatrix},$$
where $$A:=\(\lambda-1\)I-\Lambda+q_{11}\overline{Q}\Lambda+ \overline{q_{11}}\Lambda Q^t,\quad
B:=\(q_{12}\overline{Q}\Lambda,q_{13}\overline{Q}\Lambda,\cdots,q_{1n}\overline{Q}\Lambda\),$$
$$C:=B^*, \quad D:=\diag(\lambda I-\Lambda, \lambda I-\Lambda,\cdots, \lambda I-\Lambda).$$
Without loss of generality, suppose that the determinant of matrix $D$ is not zero, then
$$\begin{aligned}
\det
\begin{pmatrix}
 A & B \\
C & D \\
\end{pmatrix}
&=\det\left(A-BD^{-1}C\right)\cdot\det D\\
&=\det\left(A-\(1-\left| q_{11}\right|^2\)\overline{Q}\Lambda \widehat{D}\Lambda Q^t\right)\cdot\det D,\\
\end{aligned}$$
where $\widehat{D}=\diag(\frac{1}{\lambda-1},\frac{1}
{\lambda}\cdots,\frac{1}{\lambda})$.
\\Thus
$$A-BD^{-1}C= A-\(1-\left| q_{11} \right| ^2\)\overline{Q}\Lambda \widehat{D} \Lambda Q^t=
\begin{pmatrix}
\widetilde{A} & \widetilde{B} \\
\widetilde{C}  & \widetilde{D} \\
\end{pmatrix},$$
where $\widetilde{A}:=\lambda-2+2\left| q_{11} \right| ^2-\frac{1-\left| q_{11} \right| ^2}{\lambda-1}\left| q_{11} \right| ^2,$
$\widetilde{B}:=(\overline{q_{11}}q_{21}\frac{
\lambda -2+\left| q_{11}\right| ^2}
{\lambda-1},\cdots,\overline{q_{11}}q_{n1}\frac{\lambda -2+\left| q_{11}\right| ^2}{\lambda-1}
)$,
$ \widetilde{C}:=\widetilde{B}^*$,
$\widetilde{D}:=\(\lambda-1\)I-\frac{1-\left| q_{11} \right| ^2}{\lambda-1}u^*u$, $u:=(q_{21},q_{31},\cdots,q_{n1})$.
\\Similarly,
$$\begin{aligned}
\det
\begin{pmatrix}
\widetilde{A} & \widetilde{B} \\
\widetilde{C}  & \widetilde{D} \\
\end{pmatrix}
&=\det\left(\widetilde{A}-\widetilde{C}\widetilde{A}^{-1}\widetilde{B}\right) \cdot\det\widetilde{A}\\
&=\det\left(\(\lambda-1\)I-\frac{1}{\lambda-1+\left| q_{11}\right| ^2}u^*u\right) \cdot\det\widetilde{A}\\
&=\left[\lambda\frac{\lambda-2+\left| q_{11}\right| ^2}{\lambda-1+\left| q_{11}\right| ^2}
\(\lambda-1\)^{n-2}\right]\cdot
\left[\frac{1}{\lambda-1}\(\lambda-1+\left|q_{11}\right|^2\)\(\lambda-2+\left| q_{11}\right| ^2\)\right]\\
&=\(\lambda-2+\left| q_{11} \right|^2\)^2\(\lambda-1\)^{n-3}\lambda.\\
\end{aligned}$$
So we have
$$\begin{aligned}
\det
\begin{pmatrix}
 A & B \\
C & D \\
\end{pmatrix}
&=\det\left(A-BD^{-1}C\right)\cdot\det D\\
&=\det\left(\widetilde{A}-\widetilde{C}\widetilde{A}^{-1}
\widetilde{B}\right) \cdot\det\widetilde{A} \cdot\det D\\
&=\(\lambda-2+\left| q_{11} \right|^2\)^2\(\lambda-1\)^{n-3}\lambda
\(\lambda-1\)^{n-1}\lambda^{\(n-1\)^2}\\
&=\(\lambda-2+\left| q_{11} \right|^2\)^2\(\lambda-1\)^{2n-4}\lambda^
 {\(n-1\)^2+1}.
\end{aligned}$$
Finally we observe that $q_{11}=\Tr X$. The proof is complete.
\end{proof}

Immediately we obtain
\begin{cor}\label{corrankX1}
Let $X$ be a complex square matrix of order $n$ $(\geq2)$ with $\|X\|=1$ and $rank(X)=1$. Then $\lambda_1(T_X)=2$ if and only if $\Tr(X)=0$.
\end{cor}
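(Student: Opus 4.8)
The plan is to read the corollary straight off the characteristic polynomial just obtained. From
$$\det(\lambda I-T_X)=\(\lambda-2+|\Tr X|^2\)^2\(\lambda-1\)^{2n-4}\lambda^{\(n-1\)^2+1},$$
the spectrum of $T_X$ is the multiset $\{(2-|\Tr X|^2)^2,\ 1^{2n-4},\ 0^{(n-1)^2+1}\}$, so the only candidates for the top eigenvalue $\lambda_1(T_X)$ are the value $2-|\Tr X|^2$ and, when $n\geq 3$, the value $1$.

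The only point that needs an extra word is the a priori bound $|\Tr X|\leq 1$. Since $\rank(X)=1$, I would write $X=uv^*$ with $u,v\in\mathbb{C}^n$; then $\|X\|^2=\|u\|^2\|v\|^2=1$ and, by Cauchy--Schwarz, $|\Tr X|=|v^*u|\leq\|u\|\,\|v\|=1$. Hence $2-|\Tr X|^2\geq 1$, which identifies $2-|\Tr X|^2$ as $\lambda_1(T_X)$ (for $n=2$ the middle factor is absent and the same conclusion is immediate; this is also consistent with $\Tr T_X=2n-2|\Tr X|^2$ from Corollary \ref{cortrTX}).

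It then follows at once that $\lambda_1(T_X)=2$ if and only if $|\Tr X|^2=0$, i.e.\ $\Tr X=0$, which is the assertion. As a remark, the same fact can be recovered from Theorem \ref{equal} together with $\rank(X)=1$: a unit-norm rank-one matrix with $\lambda_1(T_X)=2$ is of the form $U\diag(X_0,O_{n-2})U^*$ with $X_0$ rank one and traceless, so $\Tr X=\Tr X_0=0$, and conversely a rank-one traceless $X$ is unitarily similar to a multiple of $E_{12}$, whose associated operator has $\lambda_1=2$. There is essentially no obstacle; the single substantive step is the bound $|\Tr X|\leq 1$, which guarantees that the eigenvalue $2-|\Tr X|^2$ is not overtaken by the eigenvalue $1$.
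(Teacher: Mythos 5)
Your proposal is correct and follows essentially the same route as the paper, which states the corollary as an immediate consequence of the characteristic polynomial $\det(\lambda I-T_X)=(\lambda-2+|\Tr X|^2)^2(\lambda-1)^{2n-4}\lambda^{(n-1)^2+1}$; your extra step bounding $|\Tr X|\leq 1$ via $X=uv^*$ and Cauchy--Schwarz just makes explicit why $2-|\Tr X|^2$ is indeed the top eigenvalue. In fact even that bound is dispensable for the equivalence itself, since $\lambda_1(T_X)=2$ forces $2$ to be a root of the polynomial, which can only come from the first factor.
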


\begin{rem}
Actually, the conditions $\|X\|=1$, $\rank(X)=1$ and $\Tr(X)=0$ in Corollary \ref{corrankX1} implies that $X$ is unitary similar to $\diag(X_0,O)$, where
$$X_0=
\begin{pmatrix}
    0 &    0    \\
    1 &    0   \\
\end{pmatrix}.$$
Here, we give a simple calculation. Suppose $X=Q_1\Lambda Q_2$ is the singular value decomposition of $X$ and $Q=Q_2Q_1$, then  $Q_1^*XQ_1=\Lambda Q$. Due to $\|X\|=1$, $\rank(X)=1$ and $\Tr(X)=0$, we can assume
$$\Lambda Q=
\begin{pmatrix}
    0 &    q    \\
    0 &    0   \\
\end{pmatrix},$$
where $q=(q_{12},q_{13},\cdots,q_{1n})$ and $\|q\|=1$. Extend $q$ to be a unit orthogonal basis $\{q,p_1,p_2,\cdots,p_{n-2}\}$ of $\mathbb{C}^{n-1}$ and let
$$U=
\begin{pmatrix}
    0 & 1& 0& 0&\cdots&  0    \\
   q^*&  O &   p_1^*&  p_2^*&\cdots&  p_{n-2}^* \\
\end{pmatrix},$$
then $U^*U=I$ and $U^*Q_1^*XQ_1U=\diag(X_0,O)$.
\end{rem}

The last special case of Theorem \ref{thm-special-cases} is a simple consequence of Corollary \ref{cortrTX}.
\begin{thm}
The complex LW Conjecture \ref{conj2D} is true when $n=2,3$.
\end{thm}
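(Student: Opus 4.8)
The plan is to reduce the cases $n=2,3$ to a direct eigenvalue bookkeeping based on Corollary \ref{cortrTX} together with the evenness of positive-eigenvalue multiplicities (Proposition \ref{prop2}). Recall that Conjecture \ref{conj2D} asks for $\sum_{i=1}^{2k}\lambda_i(T_X)\le 2k+2$ for $k=1,\dots,[\tfrac{n^2}{2}]$, and by Proposition \ref{eq-45} this is equivalent to Conjecture \ref{conj4}, the weak majorization of $\lambda(T_X)$ by $\{2^2,1^{2n-4},0^{(n-1)^2+1}\}$. The case $n=2$ is already completely settled inside Corollary \ref{cortrTX}: there $\lambda_1=\lambda_2=2-|\Tr X|^2\le 2$ and $\lambda_3=\lambda_4=0$, which is exactly the majorization by $\{2^2,0^2\}$ (here $2n-4=0$, $(n-1)^2+1=2$). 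So the only genuine work is $n=3$, where $N=n^2=9$ and we must bound $\sum_{i=1}^{2k}\lambda_i(T_X)$ for $k=1,2,3,4$.

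For $n=3$ I would argue as follows. By Corollary \ref{cortrTX}, $\Tr T_X=6-2|\Tr X|^2\le 6$, which immediately handles every full partial sum, and in particular $\sum_{i=1}^{8}\lambda_i(T_X)\le\Tr T_X\le 6=2\cdot 4$ settles $k=4$ (the target $2k+2=10$ is not even needed). The case $k=1$ is the complex BW inequality $\lambda_1(T_X)=\lambda_2(T_X)\le 2$, already available from Theorem \ref{thm1} or Theorem \ref{first new proof}. For $k=3$ we need $\sum_{i=1}^{6}\lambda_i(T_X)\le 8$; since the total is at most $6$, this is trivial. The only remaining case is $k=2$, i.e. $\lambda_1(T_X)+\lambda_2(T_X)+\lambda_3(T_X)+\lambda_4(T_X)\le 6$. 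By Proposition \ref{prop2} this equals $2\lambda_1(T_X)+2\lambda_3(T_X)$, so $k=2$ for $n=3$ is precisely the $n=3$ instance of Conjecture \ref{conj3}, $\lambda_1(T_X)+\lambda_3(T_X)\le 3$. Thus the whole statement reduces to proving Conjecture \ref{conj3} when $n=3$.

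To prove $\lambda_1(T_X)+\lambda_3(T_X)\le 3$ for $n=3$, I would combine the trace identity with positivity. Write the nonincreasing list $\lambda_1=\lambda_2\ge\lambda_3=\lambda_4\ge\lambda_5=\lambda_6\ge\cdots$ (the evenness forcing equal consecutive pairs), with $\lambda_9=0$ since $T_X X=0$. Then
\[
6\ge\Tr T_X=2\lambda_1+2\lambda_3+2\lambda_5+2\lambda_7,
\]
so $\lambda_1+\lambda_3\le 3-(\lambda_5+\lambda_7)\le 3$, since all eigenvalues are nonnegative by Proposition \ref{prop1}. This gives $\lambda_1(T_X)+\lambda_3(T_X)\le 3$ directly, and hence $\sum_{i=1}^{4}\lambda_i(T_X)\le 6$, completing $k=2$ and therefore all of Conjecture \ref{conj2D} for $n=3$. (One should double-check the bookkeeping that $N=9$ is odd while positive eigenvalues come in pairs, so there is at least one forced zero eigenvalue; this is consistent with $T_X X=0$ and is what makes the trace estimate slack enough.)

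The main obstacle, such as it is, is purely organizational rather than mathematical: one must verify that for $n=2,3$ the range $k=1,\dots,[n^2/2]$ contains no case requiring more than the trace bound plus the already-established $\lambda_1(T_X)\le 2$, and that the pairing $\lambda_{2i-1}=\lambda_{2i}$ lines up the sums correctly. Since for small $n$ the quantity $\Tr T_X=2n-2|\Tr X|^2\le 2n$ is at most $2\cdot[n^2/2]+2$ exactly when $n\le 3$ (for $n=2$, $2n=4<6$; for $n=3$, $2n=6<10$; for $n=4$, $2n=8<18$ still, but there the intermediate sums $k=2,3$ are no longer controlled by the trace alone, which is why $n\ge 4$ genuinely requires the deeper arguments), the small-$n$ cases collapse to elementary estimates. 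I expect the write-up to be only a few lines, invoking Corollary \ref{cortrTX}, Proposition \ref{prop1}, Proposition \ref{prop2}, and the complex BW inequality.
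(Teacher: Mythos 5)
Your proposal is correct and follows essentially the same route as the paper: the $n=2$ case is read off directly from Corollary \ref{cortrTX}, and for $n=3$ the cases $k\geq 2$ are handled by the trace bound $\Tr T_X = 6-2|\Tr X|^2\leq 6\leq 2k+2$ while $k=1$ is the complex BW inequality. Your extra observation that the $k=2$, $n=3$ case is exactly the $n=3$ instance of Conjecture \ref{conj3} is a pleasant remark but does not change the argument, which in both write-ups is just the trace estimate.
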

\begin{proof}
The case $n=2$ is an immediate consequence of Corollary \ref{cortrTX} since it implies the set of eigenvalues $\lambda(T_X)$ is weakly majorized by $\{2^2, 0^2\}$.

The case $n=3$ is similar, since Corollary \ref{cortrTX} shows that $$\sum_{i=1}^{2k}\lambda_i(T_X)\leq \Tr T_X=6-2|\Tr X|^2\leq6\leq 2k+2\quad \textit{for any }k\geq2,$$
and for $k=1$ it follows from the BW inequality (e.g., Theorem \ref{thm1}) that $$\sum_{i=1}^{2k}\lambda_i(T_X)=2\lambda_1(T_X)\leq4=2k+2.$$
The proof is complete.
\end{proof}

Now we come to prove the partial results Theorems \ref{thm-conj3-weak} and \ref{thm-conj7-weak}.\\
\textbf{Proof of Theorem \ref{thm-conj3-weak}}
By Lemma \ref{lem3} and the proof of Theorem \ref{thm1}, for the fixed sequence $i_1=2, i_2=3, i_3=4$, we have
$$\begin{aligned}
\sum_{i=2}^4\lambda_i(T_X)&\leq\sum_{i=2}^4\lambda_i(K_1)+\sum_{i=1}^3\lambda_i(K_2)\\
&\leq 3\Big({\sigma}_1^2(X)+{\sigma}_2^2(X)\Big)+2\sum_{i=1}^3\Big(\lambda_i(-A^t\otimes A)+\lambda_i(B^t\otimes B)\Big),
\end{aligned}$$
as $K_2=-X^t\otimes X^*-\overline{X}\otimes X=2\(B^t\otimes B-A^t\otimes A\)$ for the decomposition $X=A+B$ with $A$ Hermitian and $B$ skew-Hermitian. Similarly we have
 $$\lambda_1(T_X)=\lambda_2(T_X)\leq{\sigma}_1^2(X)+{\sigma}_2^2(X)+2\Big(\lambda_1(-A^t\otimes A)+\lambda_1(B^t\otimes B)\Big).$$
This implies
$$
\sum_{i=1}^4\lambda_i(T_X)\leq 4\Big({\sigma}_1^2(X)+{\sigma}_2^2(X)\Big)+\phi(X),$$
where $\phi(X):=\varphi(A)+\widetilde{\varphi}(B)$ and
$$\begin{aligned}&\varphi(A):=4\lambda_1(-A^t\otimes A)+2\sum_{i=2}^3\lambda_i(-A^t\otimes A),\\
&\widetilde{\varphi}(B):=4\lambda_1(B^t\otimes B)+2\sum_{i=2}^3\lambda_i(B^t\otimes B).\end{aligned}$$

Let $\lambda(A)=\{a_1, \cdots, a_n\}$, $a_1\geq\cdots\geq a_n$;
$\lambda(B)=\{b_1\mathbf{i}, \cdots, b_n\mathbf{i}\}$,
$b_1\geq\cdots\geq b_n$.
Then by Lemma \ref{lem2},
$$\lambda(-A^t\otimes A)=\{-a_ia_j: 1\leq i, j\leq n\}, $$
$$\lambda(B^t\otimes B)=\{-b_ib_j: 1\leq i, j\leq n\}. $$
We claim that $$\phi(X)=\varphi(A)+\widetilde{\varphi}(B)\leq \sqrt{10}\Big(\|A\|^2+\|B\|^2\Big).$$
We only need to show $\varphi(A)\leq \sqrt{10}\|A\|^2$ since the case for $\widetilde{\varphi}(B)$ is similar.
Obviously $\varphi(A)$ would be non-positive unless $a_1>0>a_n$, in which case we have $$\lambda_1(-A^t\otimes A)=\lambda_2(-A^t\otimes A)=a_1|a_n|=\max_{i,j}\{-a_ia_j\}$$
and we can also assume without loss of generality that $$\lambda_3(-A^t\otimes A)=\lambda_4(-A^t\otimes A)=a_2|a_n|\geq0.$$
Then
$$\begin{aligned}
\varphi(A)
&=6a_1|a_n|+2a_2|a_n|=2|a_n|(3a_1+a_2)\\
&\leq 2\sqrt{10}|a_n|\sqrt{a_1^2+a_2^2}\leq \sqrt{10}(a_n^2+a_1^2+a_2^2) \\
&\leq \sqrt{10}\|A\|^2.
\end{aligned}$$

In conclusion,
$$\sum_{i=1}^4\lambda_i(T_X)\leq 4\Big({\sigma}_1^2(X)+{\sigma}_2^2(X)\Big)+\sqrt{10}\Big(\|A\|^2+\|B\|^2\Big)\leq(4+\sqrt{10})|X\|^2.$$
By Proposition \ref{prop2}, this completes the proof.
\hfill$\Box$

\begin{rem}
From the proof, one can see that the (non-sharp) upper bounds for the complex version and real version of Conjecture \ref{conj3} are no different, both $2+\sqrt{10}/2$.
\end{rem}

\begin{rem}
The reason of why we did not get the optimal upper bound $3$ of Conjecture \ref{conj3} mainly comes from that we divided the Hermitian matrix $K_X^*K_X$ into three parts and estimated them separately. The following example explains that the upper bound $2+\sqrt{10}/2$ we got in this way cannot be optimal. Set
$$X=
\begin{pmatrix}
   -0.1236 &   0.0334 &    0.0647 \\
   -0.4343  & 0.1029  & -0.8833 \\
         0      &   0   &      0 \\
\end{pmatrix}.
$$
By numerical calculation we see $$
\sum_{i=1}^4\lambda_i(T_X)\approx 5.9814<6< 4\Big({\sigma}_1^2(X)+{\sigma}_2^2(X)\Big)+\phi(X)\approx 7.0554<4+\sqrt{10}.
$$
\end{rem}


To estimate higher order eigenvalues, we need the following lemma.
\begin{lem}\label{numbers inequality}
Suppose $\eta_1,\eta_2,\cdots,\eta_{n_1}$ and
$\omega_1,\omega_2,\cdots,\omega_{n_2}$ are nonnegative real numbers and $r_{ij}\in \{0,1\}$ such that
$$\sum_{i=1}^{n_1}\eta_i^2+\sum_{i=1}^{n_2}\omega_i^2=1,\quad
\sum_{i=1}^{n_1}\sum_{j=1}^{n_2}r_{ij}=m.$$
Then we have
\begin{equation}
\sum_{i=1}^{n_1}\sum_{j=1}^{n_2}\eta_i\omega_{j}r_{ij}\leq
\frac{\sqrt{m}}{2}.
\end{equation}
\end{lem}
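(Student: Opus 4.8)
The plan is to reduce the estimate to one application of the Cauchy--Schwarz inequality followed by AM--GM. Write $R:=\{(i,j)\colon r_{ij}=1\}$, so that $|R|=\sum_{i,j}r_{ij}=m$ and, since $r_{ij}\in\{0,1\}$,
$$\sum_{i=1}^{n_1}\sum_{j=1}^{n_2}\eta_i\omega_jr_{ij}=\sum_{(i,j)\in R}\eta_i\omega_j.$$
First I would apply Cauchy--Schwarz over the index set $R$, pairing the quantity $\eta_i\omega_j$ with the constant $1$:
$$\sum_{(i,j)\in R}\eta_i\omega_j\ \le\ \Bigl(\sum_{(i,j)\in R}\eta_i^2\omega_j^2\Bigr)^{1/2}\Bigl(\sum_{(i,j)\in R}1\Bigr)^{1/2}=\sqrt{m}\,\Bigl(\sum_{(i,j)\in R}\eta_i^2\omega_j^2\Bigr)^{1/2}.$$

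Next I would drop the restriction to $R$ (all the discarded terms $\eta_i^2\omega_j^2$ are nonnegative) and factor the full double sum:
$$\sum_{(i,j)\in R}\eta_i^2\omega_j^2\ \le\ \sum_{i=1}^{n_1}\sum_{j=1}^{n_2}\eta_i^2\omega_j^2=\Bigl(\sum_{i=1}^{n_1}\eta_i^2\Bigr)\Bigl(\sum_{j=1}^{n_2}\omega_j^2\Bigr).$$
Setting $a:=\sum_i\eta_i^2$ and $b:=\sum_j\omega_j^2$, the normalization hypothesis says $a+b=1$ with $a,b\ge0$, so $ab\le\bigl(\tfrac{a+b}{2}\bigr)^2=\tfrac14$. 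Combining the two displays gives $\sum_{i,j}\eta_i\omega_jr_{ij}\le\sqrt{m}\cdot\tfrac12$, which is exactly the asserted bound.

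Since every step is an elementary inequality, I do not expect any real obstacle here; the only thing to be careful about is using $r_{ij}\in\{0,1\}$ in the two places indicated — once so that $\sum_{i,j}r_{ij}$ equals the cardinality $|R|=m$ appearing under the square root, and once so that enlarging the sum from $R$ to all pairs $(i,j)$ can only increase it. (For orientation: when $m=1$ equality holds iff the mass concentrates on a single admissible pair with $\eta_i=\omega_j=1/\sqrt2$; for general $m$ one spreads the mass over an $m$-element product set of admissible pairs, but the characterization of equality is not needed for the applications.)
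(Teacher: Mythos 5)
Your proof is correct. It reaches the same key intermediate bound as the paper, namely $\sum_{i,j}\eta_i\omega_jr_{ij}\leq\sqrt{m}\,\bigl(\sum_i\eta_i^2\bigr)^{1/2}\bigl(\sum_j\omega_j^2\bigr)^{1/2}$, followed by the same AM--GM step, but you get there by a cleaner route. The paper first sorts both sequences and argues that one may assume the support $\{r_{ij}=1\}$ consists of row-prefixes $\{(i,j):1\leq j\leq p_i\}$ with $\sum_ip_i=m$ (a rearrangement-type reduction that is stated only as ``without loss of generality''), and then applies Cauchy--Schwarz twice: once over the rows $i$ and once inside each row to bound $\bigl(\sum_{j\leq p_i}\omega_j\bigr)^2\leq p_i\sum_{j\leq p_i}\omega_j^2$. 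Your single application of Cauchy--Schwarz over the index set $R$ against the constant $1$, followed by enlarging $\sum_{(i,j)\in R}\eta_i^2\omega_j^2$ to the full product $\bigl(\sum_i\eta_i^2\bigr)\bigl(\sum_j\omega_j^2\bigr)$, bypasses the sorting/selection step entirely and is both shorter and easier to verify; the only price is that it obscures slightly where equality can occur, which, as you note, is not needed for the application.
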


\begin{proof}
Suppose $\eta_1\geq\cdots\geq\eta_{n_1}\geq0$ and $\omega_{1}\geq\cdots\geq\omega_{n_2}\geq0$, without loss of generality we can select the following $m$ elements with non-vanishing $r_{ij}$'s:
\begin{itemize}
\item $\eta_1\omega_1\geq\eta_1\omega_2\geq
\cdots\geq\eta_1\omega_{p_1}$
\item $\eta_2\omega_1\geq\eta_2\omega_2\geq
\cdots\geq\eta_2\omega_{p_2}$
\item $\cdots$
\item $\eta_t\omega_1\geq\eta_t\omega_2\geq
\cdots\geq\eta_t\omega_{p_t}$
\end{itemize}
where $p_1+p_2+\cdots+p_t=m$. Thus we complete the proof by
$$\begin{aligned} \sum_{i=1}^{n_1}\sum_{j=1}^{n_2}\eta_i\omega_{j}r_{ij}=\sum_{i=1}^{t}\eta_i\sum_{j=1}^{p_i}\omega_j
&\leq\sqrt{\sum_{i=1}^{t}\eta_i^2}\sqrt{\sum_{i=1}^{t}(\sum_{j=1}^{p_i}\omega_j)^2}
\leq\sqrt{\sum_{i=1}^{t}\eta_i^2}\sqrt{\sum_{i=1}^{t}p_i\sum_{j=1}^{p_i}\omega_j^2} \\
&\leq\sqrt{\sum_{i=1}^{t}\eta_i^2}\sqrt{\sum_{i=1}^{t}p_i\sum_{j=1}^{n_2}\omega_j^2}
\leq\sqrt{\sum_{i=1}^{n_1}\eta_i^2}\sqrt{m\sum_{j=1}^{n_2}\omega_j^2} \\
&\leq\frac{\sqrt{m}}{2} (\sum_{i=1}^{n_1}\eta_i^2+\sum_{i=1}^{n_2}\omega_i^2)=\frac{\sqrt{m}}{2}. \qquad \qquad \qquad \qquad \qedhere
\end{aligned}$$
\end{proof}
\textbf{Proof of Theorem \ref{thm-conj7-weak}.}
The proof is similar to that of Theorem \ref{thm-conj3-weak}. Briefly, by Lemma \ref{lem3} and Lemma \ref{numbers inequality}, we have
$$\begin{aligned}
\sum_{i=1}^{2k}\lambda_i(T_X)
&\leq\sum_{i=1}^{2k}\lambda_i(K_1)+\sum_{i=1}^{2k}\lambda_i(K_2)\\
&\leq\sum_{i=1}^{2k}\lambda_i(K_1)+
2\sum_{i=1}^{2k}\Big(\lambda_i(-A^t\otimes A)+\lambda_i(B^t\otimes B)\Big)\\
&\leq2k+1+2\Big(\sqrt{k}\|A\|^2+\sqrt{k}\|B\|^2\Big)\\
&=2k+1+2\sqrt{k},
\end{aligned}$$
where $\sum_{i=1}^{2k}\lambda_i(K_1)\leq2k+1$ follows from $$\lambda_1(K_1)=2\sigma_1^2(X)\leq2, \quad \lambda_i(K_1)\leq \lambda_2(K_1)=\sigma_1^2(X)+\sigma_2^2(X)\leq1 \textit{ for } i\geq2;$$
and $$\sum_{i=1}^{2k}\lambda_i(-A^t\otimes A)\leq2\sum_{r=1}^{k}\lambda_{2r-1}(-A^t\otimes A)\leq \sqrt{k}\|A\|^2$$ (similar for $\sum_{i=1}^{2k}\lambda_i(B^t\otimes B)\leq \sqrt{k}\|B\|^2$) follows by setting in Lemma \ref{numbers inequality}
$$\Big\{\begin{array}{ll}
\eta_i:=a_i/\|A\|,& 1\leq i\leq n_1,\\
\omega_j:=-a_{n_1+j}/\|A\|, & 1\leq j\leq n-n_1,
\end{array}$$
for  $a_1\geq\cdots\geq a_{n_1}\geq0\geq a_{n_1+1}\geq\cdots\geq a_n$ and noticing that now the nonnegative eigenvalues $\lambda_{2r-1}(-A^t\otimes A)=\lambda_{2r}(-A^t\otimes A)=-a_ia_{n_1+j}$ appear in pairs.
\hfill $\Box$

\begin{ack}
The authors would like to thank David Wenzel for his valuable discussions and comments.
\end{ack}



\begin{thebibliography}{123}
\bibitem{ATRB87}
T. Ando and R. Bhatia, \emph{Eigenvalue inequalities associated with the cartesian decomposition},
Linear and Multilinear Algebra \textbf{22} (2) (1987), 15.

\bibitem{AKMR09}
K. M. R. Audenaert, \emph{Variance bounds, with an application to norm bounds for commutators},
Linear Algebra Appl. \textbf{432} (5) (2009), 1126-1143.

\bibitem{BR97}
R. Bhatia, \emph{Matrix analysis}, Springer, New York (1997).

\bibitem{BW05}
A. B\"{o}ttcher and D. Wenzel, \emph{How big can the commutator of two matrices be and how big is it typically?} Linear Algebra Appl. \textbf{403} (2005), 216-228.

\bibitem{BW08}
A. B\"{o}ttcher and D. Wenzel, \emph{The Frobenius norm and the commutator}, Linear Algebra Appl. \textbf{429} (8-9) (2008), 1864-1885.

\bibitem{CFL13}
C. M. Cheng, K. S. Fong and W. F. Lei, \emph{On some norm inequalities involving the commutator and $XY-YX^T$}, Linear Algebra Appl. \textbf{438} (6) (2013), 2793-2807.

\bibitem{CVW10}
C. M. Cheng, S. W. Vong and D. Wenzel, \emph{Commutators with maximal Frobenius norm}, Linear Algebra Appl.
\textbf{432} (2010), 292-306.

\bibitem{CL17}
C. M. Cheng and Y. Liang, \emph{Some sharp bounds for the commutator of real matrices},
Linear Algebra Appl. \textbf{521} (2017), 263-282.

\bibitem{CL08}
T. Choi and Z.Lu, \emph{On the DDVV Conjecture and the Comass in Calibrated Geometry (I)}, Math. Z. \textbf{260} (2008), 409-429.

\bibitem{DDVV99}
P. J. De Smet, F. Dillen, L. Verstraelen and L. Vrancken, \emph{A pointwise inequality in submanifold
theory}, Arch. Math. (Brno), \textbf{35} (1999), 115-128.

\bibitem{DFV07}
F. Dillen, J. Fastenakels and J. Veken, \emph{Remarks on an inequality involving the normal scalar curvature}, Proceedings of the International Congress on Pure and Applied Differential Geometry
PADGE, Brussels, edited by F. Dillen and I. Van deWoestyne, Shaker, Aachen, (2007), 83-92.

\bibitem{DHTV04}
F. Dillen, S. Haesen, M. Petrovi\'{c}-Torga\v{s}ev and L. Verstraelen, \emph{An inequality between
intrinsic and extrinsic scalar curvature invariants for codimension 2 embeddings}, J. Geom. Physics
\textbf{52} (2004), 101-112.

\bibitem{FKC10}
K. S. Fong, C. M. Cheng and I. K. Lok,  \emph{Another unitarily invariant norm attaining the minimum norm bound for commutators}, Linear Algebra Appl. \textbf{433} (11) (2010), 1793-1797.

\bibitem{Ge14}
J. Q. Ge, \emph{DDVV-type inequality for skew-symmetric matrices and Simons-type inequality for Riemannian submersions}, Advances in Math. \textbf{251} (2014), 62-86.

\bibitem{GT08}
J. Q. Ge and Z. Z. Tang, \emph{A proof of the DDVV conjecture and its equality case}, Pacific J. Math. \textbf{237}(1) (2008), 87-95.

\bibitem{GT11}
J. Q. Ge and Z. Z. Tang, \emph{A survey on the DDVV conjecture}, \textit{Harmonic maps and differential geometry}, 247--254, Contemp. Math. \textbf{542}, Amer. Math. Soc., Providence, RI, 2011.

\bibitem{GXYZ17}
J. Q. Ge, S. Xu, H. Y. You and Y. Zhou, \emph{DDVV-type inequality for Hermitian matrices}, Linear Algebra Appl. \textbf{529} (2017), 133-147.

\bibitem{GLZ18}
J. Q. Ge, F. G. Li and Y. Zhou, \emph{Some generalizations of the DDVV-type inequalities}, arXiv:1807.07307.

\bibitem{HJ91}
R. A. Horn and C. R. Johnson, \emph{Topics in Matrix Analysis},
Cambridge University Press, New York (1991).

\bibitem{RAB09}
R. A. Brualdi, \emph{Introductory Combinatorics, 5 Edition}, Prentice-Hall, Upper Saddle(2009).

\bibitem{L07}
L. L\'{a}szl\'{o}, \emph{Proof of B\"{o}ttcher and Wenzel's conjecture on commutator norms for 3-by-3 matrices}, Linear Algebra Appl. \textbf{422} (2) (2007), 659-663.

\bibitem{Lu07}
Z. Lu, \emph{Recent developments of the DDVV conjecture}, Bull. Transilv. Univ. Bra\c{s}ov Ser. B (N.S.) \textbf{14}(49) (2007), 133-143.

\bibitem{Lu11}
Z. Lu, \emph{Normal scalar curvature conjecture and its applications}, J. Funct. Anal. \textbf{261} (2011), 1284-1308.

\bibitem{Lu12}
Z. Lu, \emph{Remarks on the B\"{o}ttcher-Wenzel inequality}, Linear Algebra Appl.
\textbf{436} (2012), no. 7, 2531-2535.

\bibitem{LW16}
Z. Lu and D. Wenzel, \emph{The normal Ricci curvature inequality}, Recent advances in the geometry of submanifolds---dedicated to the memory of Franki Dillen (1963--2013), Contemporary Mathematics \textbf{674} (2016), 99-110.

\bibitem{LW17}
Z. Lu and D. Wenzel, \emph{Commutator estimates comparising the Frobenius norm - Looking back and forth}, Oper. Theory: Adv. and Appl. \textbf{259} (2017), 533-559.

\bibitem{Tri}
D. A. Trifonov, \emph{Generalizations of Heisenberg uncertainty relation}, The European Physical Journal B, \textbf{29} (2002), 349-353.

\bibitem{VJ08}
S. W. Vong and X. Q. Jin, \emph{Proof of B\"{o}ttcher and Wenzel's conjecture}, Oper. Matrices \textbf{2} (2008), no. 3, 435-442.

\bibitem{W10}
D. Wenzel, \emph{Dominating the Commutator}, Topics in Operator Theory,
Oper. Theory: Adv. and Appl. \textbf{202} (2010), 579-600.

\bibitem{WKMRA10}
D. Wenzel and K. M. R. Audenaert, \emph{Impressions of convexity: an illustration for commutator bounds}, Linear Algebra Appl. \textbf{433} (11) (2010), 1726-1759.

\bibitem{Z11}
F. Zhang, \emph{Matrix theory : basic results and techniques}, Springer, Berlin (2011).
\end{thebibliography}
\end{document}